\newtheorem*{theorem*}{Theorem}
\newtheorem{theorem}{Theorem}[section]
\newtheorem{proposition}[theorem]{Proposition}
\newtheorem{lemma}[theorem]{Lemma}
\newtheorem{notation}[theorem]{Notation}
\theoremstyle{remark}
\newtheorem{remark}[theorem]{Remark}
\newcommand{\R}{\mathbb{R}}
\newcommand{\N}{\mathbb{N}}
\newcommand{\Z}{\mathbb{Z}}
\newcommand{\C}{\mathbb{C}}
\newcommand{\Fc}{\mathcal{F}}
\newcommand{\kf}{\mathfrak{K}}
\newcommand{\af}{\mathfrak{a}}
\newcommand{\nf}{\mathfrak{n}}
\newcommand{\var}{\mathrm{Var}}
\renewcommand{\P}{\mathbb{P}}
\newcommand{\E}{\mathbb{E}}
\newcommand{\scalar}[1]{\left\langle #1 \right\rangle }
\newcommand{\floor}[1]{\left\lfloor #1 \right\rfloor}
\newcommand{\ceil}[1]{\left\lceil #1 \right\rceil}
\newcommand{\eps}{\varepsilon}
\newcommand{\good}{\mathrm{good}}
\newcommand{\bad}{\mathrm{bad}}
\newcommand{\eye}{\mathsf{Eye}}
\newcommand{\wick}[1]{:\!#1\!:}
\renewcommand{\d}{\mathrm{d}}
\title{Derivatives of Gaussian multiplicative chaos}
\author{Antoine Jego\thanks{CEREMADE, CNRS, Université Paris-Dauphine, PSL Research University, Place de Lattre de Tassigny, 75016 Paris, France; antoine.jego@dauphine.psl.eu}}
\date {\today}
\numberwithin{equation}{section}
\begin{document}

\maketitle

\begin{abstract}
    Consider a logarithmically-correlated Gaussian field $X$ in $d$ dimensions. For all $\gamma \in (-\sqrt{2d},\sqrt{2d})$, we show that the derivatives $\frac{\partial^k}{\partial\gamma^k} \wick{e^{\gamma X_\eps}}$ of the regularised Gaussian multiplicative chaos $\wick{e^{\gamma X_\eps}}$ converge as $\eps \to 0$. By deriving optimal bounds on their growth as $k\to\infty$, we control the power expansion of $\wick{e^{\gamma X_\eps}}$ about each $\gamma\in(-\sqrt{2d},\sqrt{2d})$. This yields an alternative approach to complex Gaussian multiplicative chaos in the whole subcritical regime, based entirely on real-valued quantities.
    
    One of our key technical contributions is to provide a truncated second moment approach to the uniform integrability of the derivatives of multiplicative chaos and its associated complex variant.
\end{abstract}

\section{Introduction}

Gaussian multiplicative chaos (GMC) is a theory making sense of
\[
\wick{e^{\gamma X}} \; = \; e^{\gamma X - \frac{\gamma^2}{2}\var(X)}
\]
where $X$ is a centred logarithmically-correlated Gaussian field and $\gamma \in \C$ is a parameter (inverse temperature).
The theory for $\gamma \in \R$ was pioneered by Kahane \cite{zbMATH03960673} and revived by \cite{zbMATH06157522, zbMATH05695672, zbMATH05944325} in relation to Liouville Quantum Gravity. It has been extensively studied since then and we refer to the book \cite{zbMATH08097122} for more background.
The understanding of the case $\gamma \notin \R$ is more recent and offers new challenges, the main difficulty coming from the fact that one encounters oscillating integrals that become tedious to control.
Extending the theory to complex values of $\gamma$ is closely related to the analyticity of $\wick{e^{\gamma X}}$ with respect to $\gamma$.

The purpose of this article is to offer a new approach to the analiticity of GMC and to the construction of its complex variant based on Hermite polynomials.
We first state our main result in Section \ref{SS:main} and then compare it with the existing literature in Section \ref{SS:context}. We will explain our strategy of proof in Section \ref{SS:strategy}.

\subsection{Main result}\label{SS:main}

The sequence of Hermite polynomials $(H_k)_{k \ge 0}$ is the unique sequence of monic polynomials orthogonal in $L^2(e^{-x^2/2} \d x)$ and such that for all $k \ge 0$, the degree of $H_k$ is $k$.
They are explicitly given by
\begin{equation}\label{E:Hermite_explicit}
    H_k(x) = e^{-\frac12 \partial_x^2} x^k = k! \sum_{m=0}^{\floor{k/2}} \frac{(-1)^m}{m!(k-2m)!} \frac{x^{k-2m}}{2^m}, \qquad k \ge 0, \quad x \in \R.
\end{equation}
Section \ref{SS:Hermite} gives more background on Hermite polynomials.

\medskip
\noindent
\textbf{Wick renormalisation.}
Let $Z$ be a centred Gaussian random variable with variance $\sigma^2$. The Hermite polynomials are used to define the Wick powers and exponentials of $Z$:
\begin{equation}\label{E:def_wick1}
    :\!Z^k\!: \; = \; \sigma^k H_k\Big(\frac{Z}{\sigma}\Big), \quad k \ge 0, \qquad :\!e^{\gamma Z}\!: \; = \; e^{\gamma Z - \frac{\gamma^2}{2}\sigma^2} = \sum_{k \ge 0} \frac{\gamma^k}{k!} :\!Z^k\!:, \quad \gamma \in \C,
\end{equation}
the last equality following from \eqref{E:Hermite_generating_function} below.
In this paper, we will be interested in normalised versions of $Z^k e^{\gamma Z}$ which can be defined in three equivalent ways:
\begin{equation}\label{E:myWick}
    :\!Z^k e^{\gamma Z}\!: \; = \; \sigma^k H_k\Big(\frac{Z-\gamma \sigma^2}{\sigma}\Big) e^{\gamma Z - \frac{\gamma^2}{2}\sigma^2} = \sum_{n \ge 0} \frac{\gamma^n}{n!} :\!Z^{n+k}\!: \; = \; \frac{\partial^k}{\partial\gamma^k} \wick{e^{\gamma Z}}, \qquad k \ge 0, \quad \gamma \in \C.
\end{equation}
The second equality follows from \eqref{E:Hermite_generating_function_generalised} below, whereas the last equality follows from \eqref{E:def_wick1}.

\medskip

Let $D\subset\R^d$ be a domain and $X$ be a centred log-correlated Gaussian field on $D$ whose covariance $C(x,y)=-\log|x-y| + g(x,y)$ is such that $g \in H^s_{\mathrm{loc}}(D\times D)$ for some $s >d$.
Consider a smooth mollifier $\varphi : \R^d \to \R$, with compact support in the unit ball $B(0,1)$ and such that $\int \varphi = 1$. For $\eps \in (0,1)$, let $\varphi_\eps = \eps^{-d} \varphi (\cdot / \eps)$ and $X_\eps = X * \varphi_\eps$.

The following result is the main result of this article.

\begin{theorem}\label{T:main}
    Let $\gamma \in (-\sqrt{2d},\sqrt{2d})$ and $f:D \to \R$ be a bounded measurable function compactly supported in $D$.
    For all $k \ge 0$, the following limit exists in $L^1$:
    \begin{equation}\label{E:T_main1}
    \scalar{\wick{X^ke^{\gamma X}},f} \quad := \quad \lim_{\eps \to 0} \int_D :\!X_\eps(x)^k e^{\gamma X_\eps(x)}\!: f(x) \d x.
    \end{equation}
    Moreover, for all $\gamma' \in \C$ with $|\gamma'-\gamma|<\sqrt{d}-|\gamma|/\sqrt{2}$, the sequence
    \begin{equation}\label{E:T_main3}
        \Big( \sum_{k \ge 0} \frac{(\gamma'-\gamma)^k}{k!}\int_D :\!X_\eps(x)^k e^{\gamma X_\eps(x)}\!: f(x) \d x \Big)_{\eps>0} = \Big( \int_D \wick{e^{\gamma' X_\eps(x)}} f(x) \d x \Big)_{\eps>0}
    \end{equation}
    converges in $L^1$ to 
    \begin{equation}\label{E:T_main2}
        \sum_{k \ge 0} \frac{(\gamma'-\gamma)^k}{k!} \scalar{\wick{X^ke^{\gamma X}},f}.
    \end{equation}
\end{theorem}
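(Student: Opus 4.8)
The plan is to implement the truncated second moment method announced in the abstract. Fix $k\ge 0$. Working in a cube $Q$ compactly contained in $D$ and containing $\operatorname{supp} f$, on which the field may be controlled at all dyadic scales, write $X_r = X*\varphi_r$ and, for each dyadic $r$, let $x_r$ be the centre of the scale-$r$ box containing $x$. For a threshold $\hat\gamma$ to be tuned (adapted to the sign of $\gamma$, with $|\hat\gamma|$ between $|\gamma|$ and $\sqrt{2d}$) introduce the good event that $X_r(x_r)$ stays below $\hat\gamma\log(1/r)$ at every dyadic scale $r$ between the mollification scale $\eps$ and a coarse scale $\delta$. Decompose
\[
J_\eps:=\int_D\wick{X_\eps(x)^k e^{\gamma X_\eps(x)}}f(x)\,\d x=I^\good_{\eps,\delta}+I^\bad_{\eps,\delta},
\]
according to whether $x$ lies in the good event or not.

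First I would show that, for $\delta$ fixed, $(I^\good_{\eps,\delta})_\eps$ is Cauchy in $L^2$; this is the heart of the matter. Using the covariance identity behind \eqref{E:myWick} together with $\E[\wick{Z_1^a}\wick{Z_2^b}]=\mathbf 1_{a=b}\,a!\,\E[Z_1Z_2]^a$, one gets, writing $\rho=C(x,y)$,
\[
\E\big[\wick{Z_1^k e^{\gamma Z_1}}\wick{Z_2^k e^{\gamma Z_2}}\big]=\rho^k\sum_{n\ge 0}\frac{(n+k)!}{(n!)^2}(\gamma^2\rho)^n\;\asymp\;k!\,\rho^k e^{\gamma^2\rho}e^{2|\gamma|\sqrt{k\rho}},
\]
so that $\E[I^\good_{\eps,\delta}I^\good_{\eps',\delta}]=\int_D\int_D f(x)f(y)\,\E[\wick{X_\eps(x)^k e^{\gamma X_\eps(x)}}\wick{X_{\eps'}(y)^k e^{\gamma X_{\eps'}(y)}}\mathbf 1_\good]\,\d x\,\d y$ has, \emph{without} truncation, an $x\sim y$ singularity that is integrable precisely when $|\gamma|<\sqrt d$. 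The role of the good event is to restore integrability in the whole range $|\gamma|<\sqrt{2d}$: conditioning on the field at scales coarser than $|x-y|$ and paying, via a Girsanov tilt and the ballot-type lower tail for a log-correlated field, the cost of keeping $X_r$ below $\hat\gamma\log(1/r)$ down to scale $|x-y|$, one converts $\rho^k e^{\gamma^2\rho}$ into a genuinely integrable power $|x-y|^{-\xi}$ with $\xi<d$, the extra factors $\rho^k$ and $e^{2|\gamma|\sqrt{k\rho}}$ being only sub-polynomial in $|x-y|$. Once the diagonal is tamed, $\E[\wick{X_\eps(x)^k e^{\gamma X_\eps(x)}}\wick{X_{\eps'}(y)^k e^{\gamma X_{\eps'}(y)}}]$ converges off-diagonal to a mollifier-independent limit by standard arguments, giving Cauchyness; call $I^\good_\delta$ the $L^2$-limit.

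Next I would control the bad part by a \emph{first} moment estimate: splitting $\{x\notin\good\}$ according to the coarsest dyadic scale at which the threshold is exceeded, conditioning at that scale, and invoking a scaling relation ($X_r$ being approximately distributed as $X_\delta$ plus an independent rescaled copy of $X_{r/\delta}$) together with the maximum-type tail bound, yields self-improving inequalities $\limsup_\eps\E[\abs{I^\bad_{\eps,\delta}}]\le\eta\limsup_\eps\E[\abs{J_\eps}]$ and $\E[\abs{I^\bad_{\eps,\delta_0}}]\le\eta\,\E[\abs{J_{\eps/\delta_0}}]$ for $\delta\le\delta_0(\eta)$, the loss $\eta$ being made small by taking $|\hat\gamma|$ close to the relevant critical value and $\delta_0$ small. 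Since $\E[\abs{I^\good_{\eps,\delta_0}}]\le\E[(I^\good_{\eps,\delta_0})^2]^{1/2}$ is bounded uniformly in $\eps$ by the previous step, an elementary bootstrap lemma for sequences with $u_n\le C+\tfrac12 u_{n-n_0}$ gives $\limsup_\eps\E[\abs{J_\eps}]<\infty$; plugging this back shows $\lim_{\delta\to 0}\limsup_\eps\E[\abs{I^\bad_{\eps,\delta}}]=0$, so $(J_\eps)_\eps$ is Cauchy in $L^1$, proving \eqref{E:T_main1}. For the power-series statement I would run the same scheme keeping track of the $k$-dependence: the combinatorial $k!$ above and the threshold-dependent exponent $\xi$ propagate through the truncated second moment and the bootstrap to give, uniformly in $\eps$,
\[
\E\Big[\abs{\textstyle\int_D\wick{X_\eps(x)^k e^{\gamma X_\eps(x)}}f(x)\,\d x}\Big]\le C(\gamma,f,d)\,k!\,\big(\sqrt d-|\gamma|/\sqrt 2\big)^{-k}\,\mathrm{poly}(k),
\]
and the same bound for the $L^1$-limits $\scalar{\wick{X^k e^{\gamma X}},f}$. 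Hence for $|\gamma'-\gamma|<\sqrt d-|\gamma|/\sqrt 2$ the series \eqref{E:T_main2} converges absolutely in $L^1$, the tails $\sum_{k>K}\frac{(\gamma'-\gamma)^k}{k!}\int_D\wick{X_\eps^k e^{\gamma X_\eps}}f$ are bounded in $L^1$ uniformly in $\eps$, and, since the first equality in \eqref{E:T_main3} is just the (entire) Taylor expansion of $\gamma'\mapsto\wick{e^{\gamma' X_\eps}}$ coming from \eqref{E:def_wick1}, a dominated-convergence argument for the sum lets one interchange $\lim_\eps$ with $\sum_k$ and conclude.

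I expect the main obstacle to be the truncated second moment estimate with sharp, simultaneous control of the near-diagonal singularity and of the $k$-dependence: choosing the threshold $\hat\gamma$ so that the good part is $L^2$-convergent exactly up to $|\gamma|=\sqrt{2d}$ while the bad part remains a genuine contraction, and extracting the precise constant $\sqrt d-|\gamma|/\sqrt 2$ governing the growth in $k$, will require care with the Girsanov/ballot estimates and with the combinatorial factors $\rho^k e^{2|\gamma|\sqrt{k\rho}}$.
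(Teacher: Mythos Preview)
Your overall architecture --- good/bad decomposition, truncated second moment for the good part, a self-referential first-moment bound for the bad part exploiting scaling, then a bootstrap, and finally dominated convergence for the power series --- is exactly the paper's strategy. Two concrete gaps would block you, however.

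\textbf{The truncated second moment with a signed integrand.} For $k\ge 1$ the quantity $\wick{X_\eps^k e^{\gamma X_\eps}}$ changes sign, so you cannot ``condition at coarse scales and pay the Girsanov/ballot cost'' as in the $k=0$ case: once you insert the indicator $\mathbf 1_\good$, the expectation is no longer a product of a Hermite moment and a small probability, and putting absolute values around $H_k$ before integrating makes the bound diverge. The paper resolves this with a ``martingale trick'': one writes $\mathbf 1_{G_{N,N}}$ via inclusion--exclusion over the first failure scales, and on each piece uses that $t^{k/2}H_k(B_t/\sqrt t)$ is a martingale (Lemma~\ref{L:Hermite_martingale1d}) to integrate the Hermite factor down to the scale of the indicator \emph{before} taking absolute values. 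This is what produces the growth rate $k!\,(\sqrt2/(\hat\gamma-\gamma))^k$ in Proposition~\ref{P:bounded_L2}, not the heuristic $\rho^k e^{2|\gamma|\sqrt{k\rho}}$.

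\textbf{The optimal $k$-growth requires higher moments.} The second-moment bound is sharp only under the hidden constraint $\hat\gamma<2\gamma$ (Remark~\ref{R:restriction}): after the two-point Girsanov shift, the good event becomes $\{X_n\le(\hat\gamma-2\gamma)n\}$, which is only small when $\hat\gamma<2\gamma$. Hence for $|\gamma|<\sqrt{d/2}$ you cannot push $\hat\gamma\to\sqrt{2d}$ and the radius you extract is strictly worse than $\sqrt d-|\gamma|/\sqrt2$. The paper fixes this by taking $2p$-th moments with $p$ the least integer with $2p|\gamma|\ge\sqrt{2d}$ (Proposition~\ref{P:bound_pmoment}), which relaxes the constraint to $\hat\gamma<2p\gamma$.

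Finally, your bad-part contraction presupposes an exact scaling relation that general log-correlated fields do not enjoy. The paper first runs the whole argument for almost $*$-scale invariant fields (where Lemma~\ref{L:scaling_covariance} gives genuine scaling covariance, at the cost of carrying a sup over an auxiliary parameter $\af\in[0,1]$ in the bootstrap quantity $M_N$), and only afterwards transfers to $X$ via the decomposition $X=X^\ast+R$ of \cite{zbMATH07172346}, pushing the Hölder field $R$ into the test function through the umbral identity \eqref{E:L_Hermite_umbral1}.
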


The equality \eqref{E:T_main3} is justified by \eqref{E:myWick} since $:\!X_\eps(x)^k e^{\gamma X_\eps(x)}\!:$ agrees with $\frac{\partial^k}{\partial \gamma^k}\wick{e^{\gamma X_\eps(x)}}$. Note that, for a fixed $\eps>0$, the associated power series has an infinite radius of convergence.

\medskip

\noindent\textbf{Consequences.}
Let $\eye$ be the following set depicted in Figure~\ref{fig:eye}:
\begin{equation}\label{E:eye}
    \eye = \bigcup_{\gamma \in (-\sqrt{2d},\sqrt{2d})} \Big\{ \gamma' \in \C: |\gamma'-\gamma|< \sqrt{d}-\frac{|\gamma|}{\sqrt2} \Big\}.
\end{equation}
One can verify that it agrees with the usual ``eye-shaped'' domain of complex GMC: the open convex hull of the union of the interval $(-\sqrt{2d},\sqrt{2d})$ and the disc centred at 0 and radius $\sqrt{d}$.
Putting things together,
\begin{itemize}
    \item We have defined $\scalar{\wick{e^{\gamma X}},f}$ not only for $\gamma \in (-\sqrt{2d},\sqrt{2d})$, but for all $\gamma \in \eye$;
    \item The process $\gamma \in \eye\mapsto\scalar{\wick{e^{\gamma X}},f}$ possesses an analytic modification;
    \item For all $\gamma\in(-\sqrt{2d},\sqrt{2d})$ and $k\ge0$,
    \begin{equation}
        \label{E:intro1}
    \scalar{X^k\wick{e^{\gamma X}},f} =
    \frac{\partial^k}{\partial\gamma^k}\scalar{\wick{e^{\gamma X}},f}.
    \end{equation}
\end{itemize}
Finally, since we know that $\scalar{\wick{e^{\gamma X}},f}$ is universal in the sense that it does not depend on the specific choice of mollifier $\varphi$ (see e.g. \cite{zbMATH06725033}),  \eqref{E:intro1} implies that $\scalar{\wick{X^ke^{\gamma X}},f}$ is also universal.

\begin{remark}
    It should not be difficult to extend Theorem \ref{T:main} to show the convergence of $\wick{X_\eps^k e^{\gamma X_\eps}}$ in appropriate Sobolev spaces with negative indices.
\end{remark}

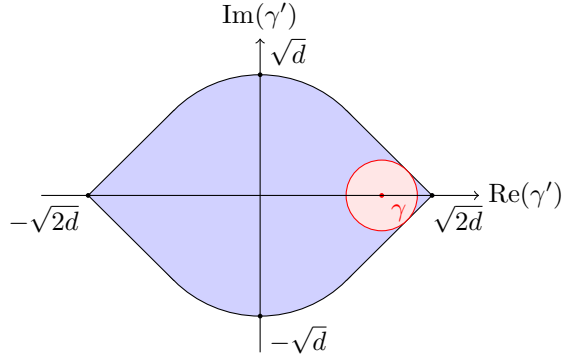
\begin{figure}
    \centering
\begin{tikzpicture}[scale=1.6]
  \pgfmathsetmacro{\d}{1}            
  \pgfmathsetmacro{\R}{sqrt(\d)}     
  \pgfmathsetmacro{\S}{sqrt(2*\d)}   
  \pgfmathsetmacro{\a}{sqrt(\d/2)}   
  \pgfmathsetmacro{\r}{\R*(1 - 1/sqrt(2))}  
  \pgfmathsetmacro{\cx}{\R}          
  \pgfmathsetmacro{\cy}{0}           

  \filldraw[fill=blue!18, draw=black] (0,0) circle (\R);
  \fill[blue!18] (\S,0) -- (\a,{\S-\a}) -- (\a,{-\S+\a}) -- cycle;
  \fill[blue!18] (-\S,0) -- (-\a,{-\S+\a}) -- (-\a,{\S-\a}) -- cycle;

  \draw[black] (\S,0) -- (\a,{\S-\a});
  \draw[black] (\S,0) -- (\a,{-\S+\a});
  \draw[black] (-\S,0) -- (-\a,{\S-\a});
  \draw[black] (-\S,0) -- (-\a,{-\S+\a});

  \filldraw[fill=red!10, draw=red] (\cx,\cy) circle (\r);
  \filldraw[red] (\cx,\cy) circle (0.015);
  \node[red, below right] at (\cx,\cy) {$\gamma$};

  \draw[->] (-1.8,0) -- (1.8,0) node[right] {$\mathrm{Re}(\gamma')$};
  \draw[->] (0,-1.3) -- (0,1.3) node[above] {$\mathrm{Im}(\gamma')$};

  \filldraw[black] (\S,0) circle (0.015);
  \node[black, below right] at ([xshift=-2pt, yshift=0pt]\S,0) {$\sqrt{2d}$};
  \filldraw[black] (-\S,0) circle (0.015);
  \node[black, below left] at ([xshift=0pt, yshift=0pt]-\S,0) {$-\sqrt{2d}$};
  \filldraw[black] (0,\R) circle (0.015);
  \node[black, above right] at ([xshift=0pt, yshift=0pt]0,\R) {$\sqrt{d}$};
  \filldraw[black] (0,-\R) circle (0.015);
  \node[black, below right] at ([xshift=0pt, yshift=0pt]0,-\R) {$-\sqrt{d}$};
\end{tikzpicture}

\caption{Illustration of the eye-shaped domain $\eye$ \eqref{E:eye} and a disc centred at some point $\gamma \in (-\sqrt{2d},\sqrt{2d})$ with radius $\sqrt{d}-|\gamma|/\sqrt2$. As $\gamma$ varies in $(-\sqrt{2d},\sqrt{2d})$, these discs cover all the set $\eye$. In this paper, we define $\wick{e^{\gamma'X}}$ in the whole set $\eye$ by defining it in each such discs.}
    \label{fig:eye}
\end{figure}

\subsection{Context}\label{SS:context}
The convergence of $(\int_D :\!X_\eps(x)^k e^{\gamma X_\eps(x)}\!: f(x) \d x)_{\eps>0}$ as stated in \eqref{E:T_main1} is well known when $k=0$: it simply corresponds to the construction of GMC with a real coupling constant $\gamma$. See \cite[Chapter 3]{zbMATH08097122} and \cite{zbMATH06370363} and the references therein.
With motivations coming from quantum field theory, the article \cite{zbMATH07500557} treated the case $k=1$, i.e. the first derivative of GMC, but was restricted to the $L^2$-phase $\gamma \in (-\sqrt{d},\sqrt{d})$.
The case $k=1$ and $\gamma=\pm\sqrt{2d}$ has also been treated in the literature, but has a different flavour. For such values of $\gamma$,
$(\int_D \wick{e^{\gamma X_\eps(x)}} f(x) \d x)_{\eps>0}$ converges to 0 in probability, while the first derivative $(\int_D \wick{X_\eps(x)e^{\gamma X_\eps(x)}} f(x) \d x)_{\eps>0}$ converges in probability to a nondegenerate measure: the critical GMC. See the survey article \cite{zbMATH07392953} for more on this topic.
Let us also mention that the case $\gamma=0$ and any value of $k \ge 0$, corresponding to Wick powers $\wick{X^k}$ of $X$, is well known. The power expansion of $\wick{e^{\gamma X}}$ about $\gamma =0$, i.e. that for all test function $f$ and $|\gamma|<\sqrt{d}$,
\[
\scalar{\wick{e^{\gamma X}},f} = \lim_{K \to \infty} \sum_{k=0}^K \frac{\gamma^k}{k!} \scalar{\wick{X^k},f} \quad \text{in }L^2,
\]
is also well known; see \cite[Section 2.3]{zbMATH06732240}.

The article \cite{zbMATH07493822} recently showed that, for all complex values of $\gamma \in \eye$ \eqref{E:eye}, the following limit exists in $L^1$:
\[
\scalar{\wick{e^{\gamma X}},f} := \lim_{\eps \to 0} \int_D :\!e^{\gamma X_\eps(x)}\!: f(x) \d x.
\]
Before \cite{zbMATH07493822}, the above limiting object had already been defined in \cite{zbMATH07172346} using a different renormalisation procedure (see also \cite{zbMATH06032744} considering a more restrictive class of log-correlated Gaussian fields and \cite{zbMATH06442803} treating a variant of the model where the real and imaginary parts of the field are independent).
This line of research was inspired by the article \cite{zbMATH02154235} which studies multiplicative cascades.
As already alluded to, as soon as one is able to define $\scalar{\wick{e^{\gamma X}},f}$ for complex values of $\gamma$, it is easy to check that $\gamma \in \eye \mapsto \scalar{\wick{e^{\gamma X}},f}$ is analytic.
Here is a quick argument which the author learnt in \cite{zbMATH07249090}.
For fixed $\eps >0$, $\gamma \in \eye \mapsto \scalar{\wick{e^{\gamma X_\eps}},f}$ is clearly holomorphic and thus, by the residue theorem, for all $\gamma \in \eye$,
\[ 
\scalar{\wick{e^{\gamma X_\eps}},f} = \frac1{2i\pi} \oint \frac1{z-\gamma} \scalar{\wick{e^{z X_\eps}},f} \d z,
\]
where the contour is any loop surrounding $\gamma$. If this loop is compactly included in $\eye$, and with some additional integrability estimate, one can apply dominated convergence theorem and deduce that
\[ 
\scalar{\wick{e^{\gamma X}},f} = \frac1{2i\pi} \oint \frac1{z-\gamma} \scalar{\wick{e^{z X}},f} \d z.
\]
The analyticity of $\gamma \in \eye \mapsto \scalar{\wick{e^{\gamma X}},f}$ then follows.

Our article offers an alternative approach to the analyticity of Gaussian multiplicative chaos and the definition of its complex variant in the whole subcritical domain $\eye$ \eqref{E:eye}.
Compared to the aforementioned literature, we take a reverse route:
we define directly the derivatives $\frac{\partial^k}{\partial\gamma^k}\scalar{\wick{e^{\gamma X}},f}$ as the limit of the regularised derivatives. By controlling the growth of these derivatives as $k\to \infty$, we then obtain optimal bounds on the radius of convergence of the power expansion of $\scalar{\wick{e^{\gamma X}},f}$ about each $\gamma \in (-\sqrt{2d},\sqrt{2d})$ allowing us to analytically extend $\scalar{\wick{e^{\gamma X}},f}$ to the entire set $\eye$.

\begin{remark}[Brownian multiplicative chaos]
In the articles \cite{zbMATH00681375, zbMATH07224960, zbMATH07224956, zbMATH07740451}, a multiplicative chaos associated to the square root of the local time of 2D Brownian motion/Brownian loop soup has been constructed for all $\gamma\in(0,2)$ (and in fact also for $\gamma=2$ in \cite{zbMATH07357903}).
The renormalised powers of the local time have also been constructed based on the generalised Laguerre polynomials; see \cite{zbMATH00219137, zbMATH05911373} and the references therein. In the context of the Brownian loop soup, \cite[Theorem~9.2]{JLQ} shows that these renormalised powers coincide with the coefficients of the power expansion of the multiplicative chaos about $\gamma=0$. 
It would be interesting to see whether this can be generalised to expansions about any value of $\gamma\in(0,2)$, as in the present paper dealing with the Gaussian case. Actually, one of the main motivations of this work was to develop an approach robust enough to be able to treat such cases; see Section~\ref{SS:strategy} below.

We mention that the article \cite{zbMATH07471411} considered some non-Gaussian log-correlated fields, constructed an associated multiplicative chaos for $\gamma$ in a complex neighbourhood of $(-\sqrt{2d},\sqrt{2d})$ and proved its analyticity there. By definition, the field possesses a nice martingale structure which enables the author to use $L^p$-estimates.
\end{remark}

\subsection{Strategy}\label{SS:strategy}

We now explain the strategy of the proof of Theorem \ref{T:main}.
This follows the same overall strategy as in the construction of real GMC (see in particular \cite{zbMATH06725033} for a streamlined presentation), but with important differences especially concerning the way we deal with the contribution of the ``bad event''.

As usual, proving the convergence of $\wick{X_\eps^k e^{\gamma X_\eps}}$ as stated in \eqref{E:T_main1} is relatively easy when $\gamma \in (-\sqrt{d},\sqrt{d})$ and is based on a second moment method. However, even for such values of $\gamma$ (except in the special case $\gamma=0$), a naive second moment method does not give the optimal radius of convergence in the series \eqref{E:T_main2}. To obtain an optimal bound, and to be able to cover the whole regime $\gamma \in (-\sqrt{2d},\sqrt{2d})$, we need to introduce truncations. For the rest of this discussion, we focus on the case $\gamma \in [0,\sqrt{2d})$. The idea of this truncation is based on the observation that the GMC measure $\wick{e^{\gamma X}}$ is almost surely supported on $\gamma$-thick points:
\[
\{ x \in D: \lim_{\eps \to 0} X_\eps(x)/|\log \eps| = \gamma \}.
\]
The value $\gamma = \sqrt{2d}$ is also natural in this context since it corresponds to the maximal possible thickness (see \cite{zbMATH05695679} in the case of the Gaussian free field):
\begin{equation}\label{E:max_thick}
    \sup_{x \in D} X_\eps(x)/|\log \eps| \xrightarrow[\eps \to 0]{\P} \sqrt{2d}.
\end{equation}
The ``good event'' at $x$ is then defined as the event that the field at $x$ is not more than $\hat\gamma$-thick, where $\hat\gamma>\gamma$ is a parameter. The rest of the proof is divided into two main steps.
\begin{itemize}
    \item \textbf{Step 1} (second moment, good event): we show that for all $k \ge 0$, $(\wick{X_\eps^k e^{\gamma X_\eps}})_{\eps >0}$ integrated against a test function and restricted to the good event is Cauchy in $L^2$.
    \item \textbf{Step 2} (first moment, bad event): we show that the restriction to the bad event is small in~$L^1$.
\end{itemize}
Step~1 requires some technical novelties compared with the standard case $k=0$, especially in the way we handle the Hermite polynomials. Step~2 differs drastically from that case: one needs to bound an expectation of the form
\[
\E\Big[ \Big| \int \wick{X_\eps(x)^k e^{\gamma X_\eps(x)}} \mathbf{1}_{\text{bad event}(x)} \d x \Big| \Big].
\]
When $k=0$, the integrand is nonnegative so the absolute values can be removed and the integral and the expectation can be exchanged.
It then boils down to bounding $\E[\wick{e^{\gamma X_\eps(x)}} \mathbf{1}_{\text{bad event}(x)}]$ which is a simple computation.
When $k \ge 1$, the integrand does not have a constant sign and the absolute values cannot be removed. The same problem appears when one considers $\wick{e^{\gamma X}}$ for $\gamma\notin\R$, so we explain in Remark~\ref{R:complex} below how this has been circumvented in this context and why those strategies cannot be used here.

We resolve this problem by deriving an inductive inequality on this first moment: by looking at the first scale where the good event fails to hold, we bound this first moment in terms of similar terms but for larger regularisation scales.
It is easier to write this argument for almost $*$-scale invariant fields (see Section \ref{S:setup} below for details about these fields) which are covariant under rescalings. We can then treat general log-correlated fields using the decomposition result of \cite{zbMATH07172346}. In its simplest form, this inductive inequality reads as
\begin{equation}
    \label{E:ineq_aa}
M_N \le \max(M_{N-1}, C_1 + C_2 M_{N-1}), \quad N\ge1,
\end{equation}
where $C_1,C_2>0$ are constants and, for $\eta >0$ small,
\begin{align*}
    M_N = \max_{\substack{\eps = e^{-n}\\0 \le n \le N}} \max_{k\ge 0} \frac{1}{k!} \Big(  \frac{(1+\eta)\sqrt{2}}{\sqrt{2d}-\gamma} \Big)^{-k} \hspace{-5pt} \sup_{\af \in [0,1]} \sup_{0<\|f\|_\infty < \infty} \frac1{\|f\|_\infty} \E \Big[ \Big| \int_{[0,1]^d} \hspace{-4.6pt} f(x) :X_\eps^\af(x)^k e^{\gamma X_\eps^\af(x)}: \d x \Big| \Big].
\end{align*}
The inequality \eqref{E:ineq_aa} then guarantees that $(M_N)_{N \ge 0}$ is uniformly bounded as soon as $C_2 <1$ and $M_0 <\infty$; see Lemma \ref{L:very_elementary}.
We believe that this argument is quite general and could be useful in other contexts. The field only needs to satisfy a weak form of scale invariance. For instance, the almost $*$-scale invariant fields are only scaling covariant; see Lemma \ref{L:scaling_covariance} below. In the above definition of $M_N$, the supremum over the scaling parameter $\af \in [0,1]$ is crucial to ``close the inequality''.

\begin{remark}[Construction of complex GMC]\label{R:complex}
We elaborate on the construction of $\wick{e^{\gamma X}}$ for complex values of $\gamma$ in the set $\eye$.
The strikingly simple observation made by \cite{zbMATH07493822} is that, in this context of complex GMC, it is actually enough to consider $\hat\gamma > \sqrt{2d}$ (close to $\sqrt{2d}$) to make Step 1 work.
Because $\sqrt{2d}$ corresponds to the maximal thickness (see \eqref{E:max_thick}), the restriction to the bad event vanishes with high probability when $\hat\gamma>\sqrt{2d}$. Hence, Step 2 is not even necessary to obtain convergence in probability. To obtain convergence in $L^1$, one needs additional estimates. Instead of considering a truncated second moment method as above, this is done using $L^p$ estimates where $p>1$ is close to 1 (depending on $\gamma$).
These estimates are rather tricky and are first obtained for $*$-scale invariant fields and their martingale approximations (see for instance \cite[Section 6]{zbMATH07172346}, or \cite{zbMATH07870787} which uses the Burkholder-Davis-Gundy inequality).
Let us also mention the work \cite{zbMATH07453036} which bounds exponential moments of Wick powers of the Gaussian free field using the Burkholder-Davis-Gundy inequality

Going back to our proof of Theorem \ref{T:main}, in principle we should be able to use the same simple trick as \cite{zbMATH07493822} and take $\hat\gamma>\sqrt{2d}$. This would allow us to only consider Step 1 above and prove that the convergence \eqref{E:T_main1} holds in probability. However, to obtain the convergence of \eqref{E:T_main3}, we need to control the truncated second moment uniformly in the power $k$ (see Proposition~\ref{P:bounded_L2} below). As we will see, these estimates will be sharp only when $\hat\gamma \in (\gamma,(2\gamma)\wedge\sqrt{2d})$ and so Step~2 cannot be skipped.
Alternatively, we could try to obtain $L^p$ estimates for $p>1$. Obtaining optimal bounds which are uniform in $k$ seems however difficult.
\end{remark}

As alluded above, our estimate on the truncated second moment, stated in Proposition \ref{P:bounded_L2}, is sharp only when $\hat\gamma\in(\gamma, (2\gamma)\wedge \sqrt{2d})$. The restriction $\hat{\gamma}< 2\gamma$, which is discussed in Remark~\ref{R:restriction}, is problematic since we actually want to take $\hat\gamma$ arbitrarily close to $\sqrt{2d}$. When $\gamma < \sqrt{2d}/2$, our second moment estimate is thus not enough to obtain the optimal radius of convergence in \eqref{E:T_main3}. Instead, we will consider moments of order $2p$ which effectively replaces the restriction $\hat\gamma<2\gamma$ by $\hat\gamma<2p\gamma$; see Proposition \ref{P:bound_pmoment}. In this way, we will be able to obtain the optimal radius of convergence for any $\gamma>0$. The case $\gamma=0$ is treated separately in Section \ref{SS:gamma=0}. As already mentioned, the case $\gamma=0$ is well known (see e.g. \cite[Section 2.3]{zbMATH06732240}) and is easier to deal with since it is the only case where the disc of convergence $\{\gamma'\in\C:|\gamma'-\gamma|<\sqrt{d}-|\gamma|/\sqrt2\}$ entirely lies in the $L^2$-phase $\{\gamma'\in\C:|\gamma'|<\sqrt{d}\}$.

\subsection{Organisation}
The rest of the article is organised as follows.
Section \ref{S:setup} introduces the setup concerning almost $*$-scale invariant fields. Section \ref{S:preliminaries} is a preliminary section gathering properties on Hermite polynomials and almost $*$-scale invariant fields.
Section \ref{S:convergence} is the core of the paper where we prove all the needed estimates about GMC and its derivatives, in the context of almost $*$-scale invariant fields. Finally, in Section \ref{S:reduction} we prove our main result Theorem \ref{T:main} by coupling the field $X$ to an almost $*$-scale invariant field.

\section{Setup}\label{S:setup}

In most of this paper, we will consider an almost $*$-scale invariant field. This sections introduces the associated setup. We refer to \cite{zbMATH07172346} for more background.
Let $\kf : \R^d \to \R$ be a seed covariance satisfying:
\begin{itemize}
\item $\kf \ge 0$ and $\kf(0)=1$.
\item $\kf$ is rotationally symmetric, i.e. $\kf(x) = \kf(y)$ for all $x,y \in \R^d$ with $|x|=|y|$.
\item $\kf$ is compactly supported in the unit ball.
\item There exists $s > (d+1)/2$, such that $0 \le \hat\kf(\xi) \lesssim (1+|\xi|^2)^{-s}$ for all $\xi \in \R^d$.
\end{itemize}
An example of such a seed covariance can be constructed as follows. Let $\phi$ be smooth, nonnegative, rotationally symmetric, supported in $B(0,1/2)$ and such that $\|\phi\|_{L^2} = 1$. Then $\kf = \phi*\phi$ satisfies the above properties.

The almost $*$-scale covariance kernel $C^\af$ associated to the seed $\kf$ and parameters $\alpha\in (0,+\infty]$ and $\af \in [0,1]$ is given by
\begin{equation}\label{E:covariance_star_almost}
C^\af(x,y) := \int_0^\infty \kf(e^u (x-y))(1- \af e^{-\alpha u}) \d u, \qquad x,y \in \R^d.
\end{equation}
The parameter $\af$ is often set to 1 in the literature. This parameter will be useful for us since it guarantees a scaling covariance property; see Lemma \ref{L:scaling_covariance} below.
When $\alpha=+\infty$ (or $\af=0$), this is referred to as $*$-scale invariant covariance kernel in the literature.
A field associated to this kernel can be constructed as follows. Let $W$ be a space-time white noise and $\tilde{\kf} = \Fc^{-1} \sqrt{\Fc \kf}$ where $\Fc$ and $\Fc^{-1}$ denote the Fourier transform and its inverse. Let then
\begin{equation}\label{E:X_WN}
X^\af(x) = \int_{\R^d \times (0,\infty)} e^{du/2} \tilde\kf(e^u(z-x)) \sqrt{1-\af e^{-\alpha u}} \d W(z,u), \qquad x \in \R^d.
\end{equation}
A natural approximation of the field $X$ is given by
\begin{equation}\label{E:X_approx_star_scale}
X_t^\af(x) = \int_{\R^d \times (0,t)} e^{du/2} \tilde\kf(e^u(z-x)) \sqrt{1-\af e^{-\alpha u}} \d W(z,u), \qquad x \in \R^d, \quad t \ge 0.
\end{equation}
Consider also a smooth mollifier $\varphi : \R^d \to \R$, with compact support in $B(0,1)$ and such that $\int \varphi = 1$. For $\eps \in (0,1)$ and $t \in (0,\infty)$, let $\varphi_\eps = \eps^{-d} \varphi (\cdot / \eps)$ and
\begin{equation}\label{E:X_approx_mollifier}
    X_\eps^\af = X^\af * \varphi_\eps
    \quad \text{and} \quad
    X_{t,\eps}^\af = X^\af_t * \varphi_\eps.
\end{equation}

For $s,t \ge 0$, $\eps,\delta \in (0,1)$ and $x,y \in \R^d$, denote by 
\begin{gather}
    C_{s,t}^\af(x,y) = \E[X^\af_s(x) X^\af_t(y)], \quad C_t^\af(x,y) = C_{t,t}^\af(x,y), \quad \sigma_t^\af = \sqrt{C_t^\af(x,x)},\\
    C_{\eps,\delta}^\af(x,y) = \E[X^\af_\eps(x) X^\af_\delta(y)], \quad C_\eps^\af(x,y) = C_{\eps,\eps}^\af(x,y), \quad \sigma_\eps^\af = \sqrt{C_\eps^\af(x,x)},\\
    C^\af_{t,\eps}(x,y) = \E[X^\af_t(x) X^\af_\eps(y)],
    \quad \sigma_{t,\eps}^\af = \sqrt{\E[X_{t,\eps}^\af(x)^2]}.
\end{gather}
When there is no ambiguity with the value of $\af$, we will drop it from our notation and simply write $X(x)$, $X_t(x)$, $X_\eps(x)$, $C(x,y)$, etc.

We now state the main result in this context.

\begin{theorem}\label{T:convergence_k}
Let $\gamma\in(-\sqrt{2d},\sqrt{2d})$.
For any bounded measurable function $f:\R^d \to \R$ with compact support and $k \ge0$,
\begin{align}\label{E:defI}
I_\eps^\af(f,k) := \int_{\R^d} f(x) :X_\eps(x)^k e^{\gamma X_\eps(x)}: \d x
\end{align}
converges in $L^1$ as $\eps \to 0$ to some random variable $I^\af(f,k)$.
Moreover, for all $\eta >0$,
\begin{align}\label{E:T_convergence_k}
    \sup_{\eps >0} \sup_{k \ge 0} \frac{1}{k!} \Big((1+\eta) \frac{\sqrt2}{\sqrt{2d}-|\gamma|} \Big)^{-k} \sup_{0<\|f\|_\infty<\infty} \frac1{\|f\|_\infty} \E[|I_\eps^\af(f,k)|] < \infty,
\end{align}
where the last supremum ranges over bounded measurable functions $f:\R^d \to \R$ with compact support included in $[0,1]^d$.
\end{theorem}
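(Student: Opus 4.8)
The plan is to follow the two-step scheme sketched in Section~\ref{SS:strategy}, working throughout with the almost $*$-scale invariant field $X^\af$ and tracking all constants uniformly in the Wick power $k$, the scaling parameter $\af\in[0,1]$ and the test function $f$ (normalised by $\|f\|_\infty$). Without loss of generality assume $\gamma\in[0,\sqrt{2d})$ and fix a truncation level $\hat\gamma\in(\gamma,\sqrt{2d})$ to be optimised later; the ``good event'' $G_{\eps,\delta}(x)$ records that the coarse-field values $X^\af_{r}(x)$ stay below $\hat\gamma\log(1/r)$ for all dyadic scales $r\in[\eps,\delta]$, and we split $I^\af_\eps(f,k)=I^{\good}_{\eps,\delta}(f,k)+I^{\bad}_{\eps,\delta}(f,k)$ accordingly. \textbf{Step 1 (truncated second moment on the good event).} For fixed $\delta>0$ I would show $(I^{\good}_{\eps,\delta}(f,k))_{\eps>0}$ is Cauchy in $L^2$ by expanding $\E[I^{\good}_{\eps,\delta}(f,k)\,I^{\good}_{\delta',\delta}(f,k)]$ over the two integration variables $x,y$, using the identity $\wick{Z^k e^{\gamma Z}}=\sigma^k H_k((Z-\gamma\sigma^2)/\sigma)e^{\gamma Z-\gamma^2\sigma^2/2}$ from \eqref{E:myWick}, and Gaussian integration by parts: the pair $(X_\eps(x),X_{\delta'}(y))$ is jointly Gaussian with covariance $C(x,y)=-\log|x-y|+O(1)$, so the double exponential produces the usual factor $|x-y|^{-\gamma^2}$, the good-event truncation produces a compensating factor $|x-y|^{(\hat\gamma)^2\wedge\dots}$-type gain which keeps the integral finite up to $\gamma<\sqrt{2d}$, and the Hermite polynomials contribute a polynomial-in-$\log|x-y|$ correction whose $k$-dependence must be controlled. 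This is exactly Proposition~\ref{P:bounded_L2}, and it is sharp precisely when $\hat\gamma\in(\gamma,(2\gamma)\wedge\sqrt{2d})$; when $\gamma<\sqrt{2d}/2$ one upgrades to a $2p$-th moment (Proposition~\ref{P:bound_pmoment}), which enlarges the admissible window to $\hat\gamma<2p\gamma$. The delicate point here is the bookkeeping of the Hermite polynomials under the shift by $\gamma\sigma^2$: one must expand $H_k$ and argue that, after taking expectations, the dominant contribution behaves like $k!\,\big((1+\eta)\sqrt2/(\sqrt{2d}-\gamma)\big)^{2k}$ up to subexponential corrections, which is where the claimed radius enters.

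\textbf{Step 2 (first moment on the bad event, inductive inequality).} Here the integrand $\wick{X_\eps(x)^k e^{\gamma X_\eps(x)}}$ changes sign, so I cannot exchange $|\cdot|$ with the integral. Instead I would decompose the complement of the good event according to the \emph{first} dyadic scale $r$ at which $X_r(x)$ exceeds $\hat\gamma\log(1/r)$; on that event, conditioning on the coarse field $X_r$ and using the Markov/scaling structure of $X^\af$ (Lemma~\ref{L:scaling_covariance}) rewrites the inner conditional expectation as a fresh copy of the same type of quantity but at regularisation scale $\eps/r$, with a new test function (depending on $X_r$) and possibly a shifted coupling constant. Summing the (geometrically decaying, by a Gaussian tail estimate at level $\hat\gamma>\gamma$) contributions of all first-failure scales, and using that the supremum over $\af\in[0,1]$ absorbs the change in scaling parameter, yields the inductive inequality of the form \eqref{E:ineq_aa}, namely $M_N\le\max(M_{N-1},C_1+C_2M_{N-1})$ with $C_2<1$ once $\eta$ and $\hat\gamma$ are chosen appropriately; the crucial cancellation is that the $k$-dependent prefactor $\frac1{k!}\big((1+\eta)\sqrt2/(\sqrt{2d}-\gamma)\big)^{-k}$ and the $1/\|f\|_\infty$ normalisation are exactly matched by the scaling so that the induction closes uniformly in $k$.

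\textbf{Conclusion.} Combining the two steps: by Lemma~\ref{L:very_elementary} the inductive inequality forces $\sup_N M_N<\infty$ provided $M_0<\infty$, which is a trivial bound at scale $\eps=1$; this establishes \eqref{E:T_convergence_k}. For convergence in $L^1$, fix $\eta=1/2$ and the corresponding $\hat\gamma$; Step~2 gives $\limsup_{\eps\to0}\E[|I^{\bad}_{\eps,\delta}(f,k)|]\to0$ as $\delta\to0$ (using the now-established finiteness of $\limsup_\eps\E[|I^\af_\eps(f,k)|]$), while Step~1 gives that $(I^{\good}_{\eps,\delta}(f,k))_\eps$ converges in $L^2\subset L^1$ for each fixed $\delta$; a standard $\eps,\delta$ diagonal argument then shows $(I^\af_\eps(f,k))_\eps$ is Cauchy in $L^1$. \textbf{Main obstacle.} I expect the hardest part to be Step~1's uniform-in-$k$ control of the Hermite-polynomial contribution to the truncated second moment: one needs the gain from the good-event truncation to beat the growth of $H_k$ on the relevant range of arguments, and to extract the exact exponential rate $\big((1+\eta)\sqrt2/(\sqrt{2d}-\gamma)\big)^{k}$ rather than a suboptimal one — this is what ultimately pins down the eye-shaped domain, and it is also why the restriction $\hat\gamma<2\gamma$ (Remark~\ref{R:restriction}) appears and must be circumvented by passing to $2p$-th moments for small $\gamma$.
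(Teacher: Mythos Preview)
Your proposal is correct and follows essentially the same approach as the paper: the good/bad split with truncation level $\hat\gamma$, the truncated second (or $2p$-th, for small $\gamma$) moment on the good event via Propositions~\ref{P:bounded_L2} and~\ref{P:bound_pmoment}, the inductive first-moment bound on the bad event via the first-failure-scale decomposition and scaling covariance (Proposition~\ref{P:bad_L1}), and closure via Lemma~\ref{L:very_elementary}. One minor remark: the initialisation $M_0<\infty$ (and more generally $M_N<\infty$ for each fixed $N$) is not entirely trivial since $M_N$ involves a supremum over all $k\ge0$; the paper isolates this as Lemma~\ref{L:initialisation}, using the Hermite bound of Lemma~\ref{L:Hermite_bound} to control the growth in $k$ at fixed $\eps$.
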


\begin{remark}
    Using the tools developed in this paper, it should not be complicated to show that for all bounded measurable function $f : \R^d \to \R$ with compact support and for all $k \ge 0$,
    \[
    \int_{\R^d} f(x) :X_t(x)^k e^{\gamma X_t(x)}: \d x
    \]
    converges in $L^1$ to $I^\af(f,k)$ as $t \to \infty$. This provides a martingale approximation to $I^\af(f,k)$. We do not write a proof of this result since it is specific to $*$-scale invariant fields.
\end{remark}

\section{Preliminaries}\label{S:preliminaries}

\subsection{Hermite polynomials}\label{SS:Hermite}

This section discusses elementary properties of the Hermite polynomials $(H_k)_{k\ge0}$ \eqref{E:Hermite_explicit}.
Some of the results of this section can be found in textbooks, but we prefer to give short derivations of the results we need. 
One of the most important properties concerns the generating function of $(H_k)_{k\ge0}$:
\begin{equation}
    \label{E:Hermite_generating_function}
    \sum_{k \ge 0} \frac{t^k}{k!} H_k (x)  = e^{tx-t^2/2}, \qquad t \in \R, \quad x \in \R.
\end{equation}
This identity can be proven directly from the explicit expression \eqref{E:Hermite_explicit} of $H_k, k \ge 0$. We will also need a slight generalisation:
    \begin{equation}\label{E:Hermite_generating_function_generalised}
        \sum_{n \ge 0} \frac{t^n}{n!} H_{k+n} (x)  = e^{tx-t^2/2} H_k(x-t), \qquad t \in \R, \quad x \in \R, \quad k \ge 0.
    \end{equation}
    To prove \eqref{E:Hermite_generating_function_generalised}, take $s \in \R$. On the one hand,
    \begin{align*}
        \sum_{k \ge 0} \frac{s^k}{k!} \Big( \sum_{n \ge 0} \frac{t^n}{n!} H_{k+n} (x) \Big)
        & = \sum_{m\ge 0} H_m(x) \sum_{n=0}^m \frac{t^n s^{m-n}}{n!(m-n)!} \\
        & = \sum_{m\ge 0} \frac{(s+t)^m}{m!} H_m(x) = e^{(s+t)x-(s+t)^2/2},
    \end{align*}
    where the last equality follows from \eqref{E:Hermite_generating_function}. On the other hand, using once more \eqref{E:Hermite_generating_function},
    \begin{align*}
        \sum_{k \ge 0} \frac{s^k}{k!} \big( e^{tx-t^2/2} H_k(x-t) \big)
        = e^{tx-t^2/2} e^{s(x-t)-s^2/2} = e^{(s+t)x-(s+t)^2/2}.
    \end{align*}
    Hence, both sides of \eqref{E:Hermite_generating_function_generalised} have the same generating function and must be equal.

\begin{lemma}\label{L:Hermite_umbral}
    For all $k \ge 0$, $u,v \in \R$ and $\rho \in [-1,1]$,
    \begin{equation}\label{E:L_Hermite_umbral1}
        H_k(\rho u + \sqrt{1-\rho^2} v) = \sum_{j=0}^k \binom{k}{j} \rho^j \sqrt{1-\rho^2}^{k-j} H_j(u) H_{k-j}(v),
    \end{equation}
    \begin{equation}\label{E:L_Hermite_umbral2}
        H_k(u+v) = \sum_{j=0}^k \binom{k}{j} u^j H_{k-j}(v),
    \end{equation}
    \begin{equation}\label{E:L_Hermite_umbral3}
        H_k(\rho u) = \sum_{i=0}^{\floor{k/2}} (-1)^i \rho^{k-2i} (1-\rho^2)^i \binom{k}{2i} \frac{(2i)!}{i!} 2^{-i} H_{k-2i}(u).
    \end{equation}
\end{lemma}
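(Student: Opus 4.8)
The plan is to prove all three identities of Lemma~\ref{L:Hermite_umbral} by the generating function method, which is the cleanest route given that \eqref{E:Hermite_generating_function} is already available. The strategy in each case is the same: fix the auxiliary variable(s), multiply the claimed identity by $t^k/k!$ (or the appropriate monomials), sum over $k$, recognise both sides as products of exponentials via \eqref{E:Hermite_generating_function}, and conclude by uniqueness of Taylor coefficients.

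First I would prove \eqref{E:L_Hermite_umbral1}. Set $a=\rho$, $b=\sqrt{1-\rho^2}$, so $a^2+b^2=1$. Consider $\sum_{k\ge0}\frac{t^k}{k!}H_k(au+bv)$. By \eqref{E:Hermite_generating_function} this equals $e^{t(au+bv)-t^2/2} = e^{(at)u-(at)^2/2}\,e^{(bt)v-(bt)^2/2}$, using $a^2t^2/2+b^2t^2/2=t^2/2$. Applying \eqref{E:Hermite_generating_function} again to each factor, this is $\big(\sum_{j\ge0}\frac{(at)^j}{j!}H_j(u)\big)\big(\sum_{i\ge0}\frac{(bt)^i}{i!}H_i(v)\big)$; collecting the coefficient of $t^k$ via the Cauchy product gives $\frac{1}{k!}\sum_{j=0}^k\binom{k}{j}a^jb^{k-j}H_j(u)H_{k-j}(v)$, which is \eqref{E:L_Hermite_umbral1}. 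For \eqref{E:L_Hermite_umbral2} I would do the same with $\sum_{k\ge0}\frac{t^k}{k!}H_k(u+v) = e^{t(u+v)-t^2/2} = e^{tu}\cdot e^{tv-t^2/2} = \big(\sum_{j\ge0}\frac{(tu)^j}{j!}\big)\big(\sum_{i\ge0}\frac{t^i}{i!}H_i(v)\big)$, and extract the coefficient of $t^k$.

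For \eqref{E:L_Hermite_umbral3}, one can either specialise \eqref{E:L_Hermite_umbral1} by taking $v=0$ and using the known values $H_{k-j}(0)$ (which vanish unless $k-j$ is even, with $H_{2m}(0)=(-1)^m(2m)!/(m!2^m)$, itself read off from \eqref{E:Hermite_explicit} at $x=0$), re-indexing $k-j=2i$; or argue directly: $\sum_{k\ge0}\frac{t^k}{k!}H_k(\rho u) = e^{t\rho u-t^2/2} = e^{(t\rho)u-(t\rho)^2/2}\cdot e^{-t^2(1-\rho^2)/2} = \big(\sum_{j\ge0}\frac{(t\rho)^j}{j!}H_j(u)\big)\big(\sum_{i\ge0}\frac{(-1)^i(1-\rho^2)^i}{i!2^i}t^{2i}\big)$, then take the coefficient of $t^k$ and simplify the resulting combinatorial factor $\frac{k!}{j!(2i)!}$ into $\binom{k}{2i}\frac{(2i)!}{i!}$ shape with $j=k-2i$. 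The computations are entirely routine; the only mild care needed is the bookkeeping of binomial coefficients in \eqref{E:L_Hermite_umbral3} and checking the range of summation (the Cauchy product automatically enforces $i\le\lfloor k/2\rfloor$ since $j=k-2i\ge0$). I do not anticipate any real obstacle here, since every step reduces to the already-established generating function identity \eqref{E:Hermite_generating_function} and manipulation of absolutely convergent power series; the slight subtlety is merely to state clearly that equality of formal (or entire) generating functions in $t$ forces equality of coefficients, which is what closes each argument.
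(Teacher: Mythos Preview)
Your proof is correct and, for \eqref{E:L_Hermite_umbral1} and \eqref{E:L_Hermite_umbral3}, essentially identical to the paper's: generating functions for the first, and specialisation at $v=0$ together with the values $H_{2m}(0)=(-1)^m(2m)!/(m!2^m)$ for the third. The one genuine difference is your treatment of \eqref{E:L_Hermite_umbral2}: the paper derives it from \eqref{E:L_Hermite_umbral1} by the substitution $(\rho,u,v)\leftarrow(\rho,u/\rho,v)$ and the limit $\rho\to0$ (using $\rho^j H_j(u/\rho)\to u^j$), whereas you give a direct generating-function argument via the factorisation $e^{t(u+v)-t^2/2}=e^{tu}\cdot e^{tv-t^2/2}$. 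Your route is cleaner and avoids the mild care needed to justify the limiting procedure; the paper's route has the aesthetic virtue of exhibiting \eqref{E:L_Hermite_umbral2} as a degenerate case of \eqref{E:L_Hermite_umbral1}.
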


\begin{proof}
    We give a proof for completeness.
    Let $t >0$. By \eqref{E:Hermite_generating_function}, we have
    \begin{align*}
        & \sum_{k \ge 0} \frac{t^k}{k!} H_k(\rho u + \sqrt{1-\rho^2} v) = e^{(\rho u + \sqrt{1-\rho^2} v)t-\frac12t^2}
        = e^{-u(\rho t) - \frac12(\rho t)^2} e^{-v(\sqrt{1-\rho^2}t)-\frac12(\sqrt{1-\rho^2}t)^2} \\
        & = \Big( \sum_{k \ge 0} \frac{(\rho t)^k}{k!} H_k(u) \Big) \Big( \sum_{k \ge 0} \frac{(\sqrt{1-\rho^2}t)^k}{k!} H_k(v) \Big)
        = \sum_{k \ge 0} t^k \sum_{j=0}^k \frac{\rho^j \sqrt{1-\rho^2}^{k-j}}{j!(k-j)!} H_j(u) H_{k-j}(v).
    \end{align*}
    Identifying the coefficients of the expansion proves \eqref{E:L_Hermite_umbral1}. For \eqref{E:L_Hermite_umbral2}, we apply \eqref{E:L_Hermite_umbral1} to $(\rho,u,v) \leftarrow (\rho,u/\rho,v)$ and we let $\rho \to 0$. The result follows since $\rho^j H_j(u/\rho) \to u^j$ as $\rho \to 0$.
    Finally, \eqref{E:L_Hermite_umbral3} follows from \eqref{E:L_Hermite_umbral1} by considering $v=0$ and from the fact that for all $n\ge 0$, $H_{2n+1}(0)=0$ and $H_{2n}(0)=(-2)^{-n} (2n)!/n!$ (which follows from the explicit formula \eqref{E:Hermite_explicit}).
\end{proof}

\begin{lemma}\label{L:Hermite_bound}
    For all $\rho \in (0,1)$, $x \in \R$ and $k \ge 0$,
    \begin{equation}\label{E:L_Hermite_upper_bound}
        |H_k(x)| \le (1-\rho^2)^{-1/4} \rho^{-k/2} \sqrt{k!} e^{\frac{\rho}{2(1+\rho)} x^2}.
    \end{equation}
\end{lemma}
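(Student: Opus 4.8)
The plan is to deduce \eqref{E:L_Hermite_upper_bound} from the \emph{diagonal Mehler formula}: for every $u \in (0,1)$ and $x \in \R$,
\begin{equation}\label{E:mehler_plan}
\sum_{k \ge 0} \frac{u^k}{k!} H_k(x)^2 = \frac{1}{\sqrt{1-u^2}} \exp\Big( \frac{u x^2}{1+u} \Big).
\end{equation}
Once \eqref{E:mehler_plan} is available, the lemma is immediate: every summand on the left-hand side is nonnegative, so the $k$-th term is at most the whole sum, i.e. $\frac{u^k}{k!} H_k(x)^2 \le (1-u^2)^{-1/2} e^{u x^2/(1+u)}$; taking square roots, rearranging, and setting $u = \rho$ gives exactly \eqref{E:L_Hermite_upper_bound}.

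To establish \eqref{E:mehler_plan} I would first record the probabilistic representation of the Hermite polynomials. Let $Z$ be a standard Gaussian. Since $\E[e^{t(x+iZ)}] = e^{tx}\E[e^{itZ}] = e^{tx - t^2/2}$, comparing with \eqref{E:Hermite_generating_function} and identifying Taylor coefficients in $t$ yields $H_k(x) = \E[(x+iZ)^k]$ for all $k \ge 0$ (the interchange of sum and expectation being justified by $\E[e^{|t|(|x|+|Z|)}] < \infty$). Now let $Z, Z'$ be independent standard Gaussians. Then $H_k(x)^2 = \E[(x+iZ)^k]\,\E[(x+iZ')^k] = \E\big[((x+iZ)(x+iZ'))^k\big]$, whence, for $u \in (0,1)$,
\begin{equation*}
\sum_{k \ge 0} \frac{u^k}{k!} H_k(x)^2 = \E\big[ e^{u (x+iZ)(x+iZ')} \big];
\end{equation*}
here the interchange is legitimate because $\mathrm{Re}\big(u(x+iZ)(x+iZ')\big) = u(x^2 - ZZ')$ and $\E[e^{-uZZ'}] < \infty$ precisely when $u < 1$.

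It then remains to compute this Gaussian integral. Expanding $(x+iZ)(x+iZ') = x^2 + i x(Z+Z') - ZZ'$, one integrates first over $Z$ (whose exponent is affine in $Z$, contributing a factor $e^{(iux - uZ')^2/2}$) and then over $Z'$, using the elementary identity $\E[e^{aW + bW^2}] = (1-2b)^{-1/2} e^{a^2/(2(1-2b))}$ valid for $W$ standard Gaussian and $\mathrm{Re}(b) < 1/2$ (here with $b = u^2/2$ and a purely imaginary $a$). A short simplification of the resulting exponent — using $1 - u^2 = (1-u)(1+u)$ — collapses it to $u x^2/(1+u)$, giving \eqref{E:mehler_plan}.

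I do not expect any substantial obstacle: the only mildly delicate points are the two interchanges of an infinite sum with an expectation, both controlled by elementary Gaussian absolute-integrability bounds that hold exactly on the range $u \in (0,1)$, together with the bookkeeping in the two-step Gaussian integral and the ensuing simplification of the exponent. It is worth stressing, though, that the detour through \eqref{E:mehler_plan} is what makes the bound sharp: replacing $H_k(x)$ by the triangle-inequality estimate $|H_k(x)| \le \E[|x+iZ|^k] = \E[(x^2 + Z^2)^{k/2}]$ discards the oscillatory cancellation in $\E[(x+iZ)^k]$ and yields an exponential factor of order $e^{\rho x^2/2}$ rather than the sharp $e^{\rho x^2/(2(1+\rho))}$ appearing in \eqref{E:L_Hermite_upper_bound}.
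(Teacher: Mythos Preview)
Your proof is correct and follows essentially the same approach as the paper: both arguments use the diagonal Mehler identity $\sum_{k\ge0}\frac{\rho^k}{k!}H_k(x)^2=(1-\rho^2)^{-1/2}e^{\rho x^2/(1+\rho)}$ and then bound a single nonnegative term by the full sum. The only difference is that the paper simply cites Mehler's formula from a textbook, whereas you supply a self-contained derivation via the representation $H_k(x)=\E[(x+iZ)^k]$; this is a pleasant addition but not a genuinely different route.
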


\begin{proof}
    Let $\rho \in (0,1)$.
    The Mehler's formula reads as
    \begin{align*}
        \sum_{k\ge 0} \frac{\rho^k}{k!} H_k(x) H_k(y) = (1-\rho^2)^{-1/2} \exp \Big( \frac{xy \rho - (x^2+y^2)\rho^2/2}{1-\rho^2} \Big).
    \end{align*}
    See e.g. \cite[6.1.13]{zbMATH01231230}. The normalisation used therein is however different from ours: the Hermite polynomials are normalised so that they are orthogonal in $L^2(e^{-x^2} \d x)$ instead of $L^2(e^{-x^2/2} \d x)$.
    When $x=y$ the left hand side sum contains only nonnegative terms. Each of these terms is therefore bounded by the right hand side which gives \eqref{E:L_Hermite_upper_bound}.
\end{proof}

As a matter of comparison, when $k$ is even, $|H_k(0)| = 2^{-k/2} k!/(k/2)!$ which behaves, up to polynomial factor, like $\sqrt{k!}$ when $k \to \infty$. So the right hand side of \eqref{E:L_Hermite_upper_bound} is a fairly good bound when $x$ is fixed and $\rho$ is close to 1. On the other hand, if $x$ grows with $k$, one might want to take different values of $\rho$ to get better bounds.

\smallskip

We now describe some of the interactions between Hermite polynomials and Gaussian random variables.

\begin{lemma}\label{L:Hermite_martingale1d}
    Let $(B_t)_{t \ge 0}$ be a standard one-dimensional Brownian motion. For all $k \ge 0$, the process $(t^{k/2} H_k (t^{-1/2}B_t))_{t \ge 0}$ is a martingale.
\end{lemma}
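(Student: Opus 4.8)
\textbf{Proof proposal for Lemma~\ref{L:Hermite_martingale1d}.}
The plan is to show directly that $(t^{k/2}H_k(t^{-1/2}B_t))_{t\ge0}$ has the martingale property by using the generating function \eqref{E:Hermite_generating_function}. The key observation is that the generating function packages all the processes $t^{k/2}H_k(t^{-1/2}B_t)$, $k\ge0$, into a single exponential martingale, and then identifying coefficients of a formal variable finishes the job. Concretely, for a parameter $\lambda\in\R$ consider the exponential martingale $M_t^{(\lambda)} := e^{\lambda B_t - \lambda^2 t/2}$, which is the classical Wald/stochastic exponential and is well known to be a martingale with respect to the Brownian filtration. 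Rescaling the parameter, set $\lambda = s\,t^{-1/2}$ for a formal variable $s$; then by \eqref{E:Hermite_generating_function} applied with $t\leftarrow s$ and $x\leftarrow t^{-1/2}B_t$,
\begin{equation*}
e^{s\,t^{-1/2}B_t - s^2/2} \;=\; \sum_{k\ge0}\frac{s^k}{k!}\,H_k\big(t^{-1/2}B_t\big) \;=\; \sum_{k\ge0}\frac{(s/\sqrt t)^k}{k!}\,t^{k/2}H_k\big(t^{-1/2}B_t\big),
\end{equation*}
so after the substitution $u = s/\sqrt t$ the left-hand side is exactly $e^{uB_t - u^2 t/2} = M_t^{(u)}$. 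Hence $M_t^{(u)}$ is the generating function, in the variable $u$, of the family $(t^{k/2}H_k(t^{-1/2}B_t))_{k\ge0}$.

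First I would fix $0\le s<t$ and a bounded $\mathcal{F}_s$-measurable random variable $G$, and write the martingale identity for $M^{(u)}$ as $\E[G\,M_t^{(u)}] = \E[G\,M_s^{(u)}]$ for every $u\in\R$. Expanding both exponential martingales as power series in $u$ and using that these series converge absolutely with enough integrability (the Gaussian tails of $B_t$ make $\E[G\sum_k |u|^k t^{k/2}|H_k(t^{-1/2}B_t)|/k!]$ finite for every fixed $u$, e.g. via the crude bound \eqref{E:L_Hermite_upper_bound}), I can interchange sum and expectation by dominated convergence. This yields
\begin{equation*}
\sum_{k\ge0}\frac{u^k}{k!}\,\E\big[G\,t^{k/2}H_k(t^{-1/2}B_t)\big] \;=\; \sum_{k\ge0}\frac{u^k}{k!}\,\E\big[G\,s^{k/2}H_k(s^{-1/2}B_s)\big]
\end{equation*}
for all $u$ in a neighbourhood of $0$. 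Two power series in $u$ that agree on a neighbourhood of $0$ have equal coefficients, so $\E[G\,t^{k/2}H_k(t^{-1/2}B_t)] = \E[G\,s^{k/2}H_k(s^{-1/2}B_s)]$ for every $k\ge0$ and every bounded $\mathcal{F}_s$-measurable $G$. Since each $t^{k/2}H_k(t^{-1/2}B_t)$ is integrable (again by \eqref{E:L_Hermite_upper_bound} and Gaussian moments), this is precisely the statement $\E[t^{k/2}H_k(t^{-1/2}B_t)\mid\mathcal{F}_s] = s^{k/2}H_k(s^{-1/2}B_s)$, i.e.\ the martingale property.

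The only real point requiring care — the ``main obstacle'', though a mild one — is the justification of the interchange of summation and expectation, i.e.\ checking the integrability needed to pass from the exponential identity to the coefficientwise identity. This is handled by Lemma~\ref{L:Hermite_bound}: for fixed $\rho\in(0,1)$ the bound \eqref{E:L_Hermite_upper_bound} gives $|H_k(t^{-1/2}B_t)|\le C_\rho\,\rho^{-k/2}\sqrt{k!}\,e^{\frac{\rho}{2(1+\rho)}B_t^2/t}$, and choosing $\rho/(1+\rho)<1$ (automatic, since $\rho<1$) keeps $e^{\frac{\rho}{2(1+\rho)}B_t^2/t}$ integrable; then $\sum_k |u|^k t^{k/2}|H_k|/k!$ is dominated by a convergent series for $|u|$ small enough, which suffices. (An alternative, slicker route, which I would mention but not need, is to apply Itô's formula to $t^{k/2}H_k(t^{-1/2}B_t)$: the drift term vanishes because $H_k$ satisfies the Hermite ODE $H_k''(x) - xH_k'(x) + kH_k(x)=0$, equivalently $\partial_t\big(t^{k/2}H_k(t^{-1/2}x)\big) = -\tfrac12 t^{k/2-1}\partial_x^2 H_k(t^{-1/2}x)$ evaluated appropriately — i.e.\ the heat-equation property $e^{-\frac12\partial_x^2}x^k = H_k(x)$ from \eqref{E:Hermite_explicit} — leaving a pure local martingale, which is a genuine martingale by the same integrability bound.)
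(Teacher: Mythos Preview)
Your proposal is correct and follows essentially the same approach as the paper: expand the exponential martingale $e^{uB_t-u^2t/2}$ via the generating function \eqref{E:Hermite_generating_function} and identify coefficients in $u$. You add more detail than the paper does on the justification of the coefficient identification (integrability via Lemma~\ref{L:Hermite_bound}), which is a welcome bit of rigor but not a genuinely different idea.
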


\begin{proof}
    This is a well-known fact; this corresponds for instance to the special case $\theta=1/2$ of \cite[Lemmas 9.7 and 9.9]{JLQ}. A short proof of this result goes as follows.
    By \eqref{E:Hermite_generating_function}, for all $\gamma \in \R$,
    \[
    e^{\gamma B_t - \gamma^2 t/2} = \sum_{k\ge 0} \frac{\gamma^k}{k!} t^{k/2} H_k(B_t / \sqrt{t}), \quad t \ge 0.
    \]
    Since the left hand side is a martingale for all $\gamma$, each of the coefficients of the expansion has to be a martingale as well, proving Lemma \ref{L:Hermite_martingale1d}.
\end{proof}

\begin{lemma}\label{L:Hermite_martingale}
Let $X$ be a standard Gaussian and $(X_1,X_2)$ be a centred Gaussian vector with $\E[X_1^2]=\E[X_2^2]=1$ and $\E[X_1X_2]=\rho \in [-1,1]$.
Let $\sigma,\sigma_1,\sigma_2 \in (-1,1)$, $m,m_1,m_2 \in \R$ and $k \ge 1$. Then
\begin{equation}
\label{E:Hermite1}
\E[H_k(\sigma X + m)] = (1-\sigma^2)^{k/2} H_k(m/\sqrt{1-\sigma^2})
\end{equation}
and
\begin{align}\label{E:Hermite2}
& \E[H_k(\sigma_1 X_1 + m_1) H_k(\sigma_2 X_2 + m_2)] \\
& = \sum_{l=0}^k (1-\sigma_1^2)^{\frac{k-l}{2}} (1-\sigma_2^2)^{\frac{k-l}{2}} \rho^l \sigma_1^l\sigma_2^l \frac{k!^2}{l!(k-l)!^2} H_{k-l}\big( \frac{m_1}{\sqrt{1-\sigma_1^2}} \big) H_{k-l}\big( \frac{m_2}{\sqrt{1-\sigma_2^2}} \big).
\nonumber
\end{align}
\end{lemma}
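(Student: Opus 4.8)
The plan is to derive both identities from the generating function \eqref{E:Hermite_generating_function} by taking expectations and matching coefficients, which avoids any direct manipulation of the explicit formula \eqref{E:Hermite_explicit}. For \eqref{E:Hermite1}, I would start from $\sum_{k\ge0}\frac{t^k}{k!}H_k(\sigma X+m)=e^{t(\sigma X+m)-t^2/2}$, take $\E[\cdot]$ using $\E[e^{t\sigma X}]=e^{t^2\sigma^2/2}$ (valid since $X$ is standard Gaussian), and obtain $\sum_{k\ge0}\frac{t^k}{k!}\E[H_k(\sigma X+m)]=e^{tm-t^2(1-\sigma^2)/2}$. Rewriting the right-hand side as $e^{(t\sqrt{1-\sigma^2})\cdot m/\sqrt{1-\sigma^2}-(t\sqrt{1-\sigma^2})^2/2}=\sum_{k\ge0}\frac{(t\sqrt{1-\sigma^2})^k}{k!}H_k(m/\sqrt{1-\sigma^2})$ and identifying the coefficient of $t^k$ gives \eqref{E:Hermite1}. (The hypothesis $\sigma\in(-1,1)$ guarantees $1-\sigma^2>0$ so the square roots make sense.)

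For \eqref{E:Hermite2} I would run the same argument in two variables. Using two formal parameters $s,t$,
\[
\sum_{k,l\ge0}\frac{s^k t^l}{k!\,l!}\,\E\big[H_k(\sigma_1 X_1+m_1)H_l(\sigma_2 X_2+m_2)\big]
=\E\big[e^{s(\sigma_1X_1+m_1)-s^2/2}\,e^{t(\sigma_2X_2+m_2)-t^2/2}\big].
\]
Since $(\sigma_1 X_1,\sigma_2 X_2)$ is a centred Gaussian vector with covariance matrix having diagonal $\sigma_1^2,\sigma_2^2$ and off-diagonal $\rho\sigma_1\sigma_2$, the right-hand side equals
\[
\exp\Big(sm_1+tm_2-\tfrac{s^2}{2}(1-\sigma_1^2)-\tfrac{t^2}{2}(1-\sigma_2^2)+st\,\rho\sigma_1\sigma_2\Big).
\]
I would then expand the cross term $e^{st\rho\sigma_1\sigma_2}=\sum_{l\ge0}\frac{(st\rho\sigma_1\sigma_2)^l}{l!}$ and, for each fixed $l$, recognise the remaining factor $e^{sm_1-\frac{s^2}{2}(1-\sigma_1^2)}$ as the generating function $\sum_{a}\frac{(s\sqrt{1-\sigma_1^2})^a}{a!}H_a(m_1/\sqrt{1-\sigma_1^2})$ (and similarly in $t$). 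Collecting the coefficient of $s^k t^k$ — which forces $a=b=k-l$ — and keeping careful track of the combinatorial factors $\frac{1}{l!}\cdot\frac{1}{(k-l)!^2}\cdot k!^2$ yields exactly \eqref{E:Hermite2}.

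The only genuinely delicate point is the bookkeeping of the factorials and binomial-type coefficients when extracting the coefficient of $s^kt^k$ from the triple product of series; this is routine but error-prone, and I would double-check it by verifying the special case $\rho=0$ (which must factor as a product of two instances of \eqref{E:Hermite1}) and the case $k=1$. Everything else — the Gaussian moment generating function computations and the coefficient identification, justified because a convergent power series identity determines its coefficients — is standard. Note that $k\ge1$ is not actually needed for the argument; the identities hold for all $k\ge0$ with the convention $H_0\equiv1$.
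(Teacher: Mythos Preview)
Your proof is correct and complete; the generating-function approach works cleanly for both identities, and your bookkeeping for the coefficient of $s^kt^k$ is accurate.

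The paper takes a somewhat different route. For \eqref{E:Hermite1} it applies the umbral identity \eqref{E:L_Hermite_umbral1} together with the orthogonality relation $\E[H_k(X)]=\mathbf{1}_{k=0}$. For \eqref{E:Hermite2} it first treats the degenerate case $\rho=1$ (i.e.\ $X_1=X_2$) using \eqref{E:L_Hermite_umbral1} and the relation $\E[H_k(X)H_l(X)]=k!\,\mathbf{1}_{k=l}$, and then reduces the general case to this one by writing $X_2\overset{(d)}{=}\rho X_1+\sqrt{1-\rho^2}\,Z$ with $Z$ independent of $X_1$ and integrating out $Z$ via \eqref{E:Hermite1}. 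Your two-variable generating-function argument is more direct---it handles all $\rho\in[-1,1]$ in one stroke and does not require the intermediate orthogonality facts or the Gaussian decomposition---whereas the paper's approach recycles the umbral lemma already in hand and makes the probabilistic structure (conditioning on $X_1$) explicit. Both methods are of course generating-function computations at bottom, since \eqref{E:L_Hermite_umbral1} itself was proved that way.
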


\begin{proof}
One could observe that \eqref{E:Hermite1} is a rephrasing of the martingale property of $(t^{k/2} H_k (t^{-1/2}B_t))_{t \ge 0}$.
Alternatively, by the orthogonality property of the Hermite polynomials, $\E[H_k(X)] = 1$ if $k=0$ and 0 otherwise. Hence, \eqref{E:Hermite1} follows by applying \eqref{E:L_Hermite_umbral1} to $\rho = \sigma$, $u=X$ and $v=m/\sqrt{1-\sigma^2}$.

We now move on to \eqref{E:Hermite2} and consider first the case $X_1=X_2$ ($\rho=1$). For all $k,l \ge 0$, $\E[H_k(X) H_l(X)] = k! \mathbf{1}_{k=l}$ (see e.g. \cite[6.1.5]{zbMATH01231230}). Hence, by \eqref{E:L_Hermite_umbral1},
\begin{align*}
    & \E[H_k(\sigma_1 X + m_1) H_k(\sigma_2 X + m_2)] \\
    & = \sum_{l=0}^k (1-\sigma_1^2)^{\frac{k-l}{2}} (1-\sigma_2^2)^{\frac{k-l}{2}} \sigma_1^l\sigma_2^l \frac{k!^2}{l!(k-l)!^2} H_{k-l}\big( \frac{m_1}{\sqrt{1-\sigma_1^2}} \big) H_{k-l}\big( \frac{m_2}{\sqrt{1-\sigma_2^2}} \big).
\end{align*}
Consider now the general case $\rho \in [-1.1]$.
We have $(X_1,X_2) \overset{\mathrm{(d)}}{=} (X_1, \rho X_1 + \sqrt{1-\rho^2} Z)$ where $Z$ is a standard Gaussian independent of $X_1$. So by \eqref{E:Hermite1} we have
\begin{align*}
& \E[H_k(\sigma_1 X_1 + m_1) H_k(\sigma_2 X_2 + m_2)] \\
& = (1-(1-\rho^2)\sigma_2^2)^{k/2}
\E[ H_k(\sigma_1 X_1 + m_1) H_k((\rho \sigma_2 X_1 + m_2)/\sqrt{1-(1-\rho^2)\sigma_2^2})].
\end{align*}
We are now back to the case $X_1=X_2$ with $(\sigma_1,m_1,\sigma_2,m_2) \leftarrow (\sigma_1,m_1,\rho\sigma_2/\sqrt{1-\sigma_2^2},m_2/\sqrt{1-\sigma_2^2})$. Rearranging, we obtain \eqref{E:Hermite2} as desired.
\end{proof}

\subsection{Proof of Theorem \ref{T:main} when \texorpdfstring{$\gamma=0$}{gamma=0}}\label{SS:gamma=0}

In this short section, we prove Theorem \ref{T:main} when $\gamma=0$. Although this is well known (see e.g. \cite[Section 2.3]{zbMATH06732240}), we choose to present the argument for completeness and because we need to treat this case separately. The proof is an immediate consequence of \eqref{E:Hermite2} above.

Let $X$ be a log-correlated Gaussian field defined on some domain $D\subset\R^d$ and $(X_\eps)_{\eps\in(0,1)}$ be its mollified version as above Theorem \ref{T:main}. Let $f:D \to \R$ be a bounded measurable function compactly supported in $D$. By \eqref{E:Hermite2} applied to $m_1=m_2=0$, $\sigma_1=\sigma_2=0$ and $\rho = \E[X_\eps(x)X_\delta(y)]/\sqrt{\var(X_\eps(x))\var(X_\delta(y))}$, we have for all $\eps,\delta \in (0,1)$,
\begin{align*}
    \E\Big[\Big( \int_D \wick{X_\eps(x)^k} f(x) \d x\Big)\Big( \int_D \wick{X_\delta(x)^k} f(x) \d x\Big)\Big]
    & = \int_{D\times D} \E[\wick{X_\eps(x)^k}\wick{X_\delta(y)^k}] f(x)f(y) \d x \d y\\
    & = k! \int_{D\times D} \E[X_\eps(x)X_\delta(y)]^k f(x)f(y) \d x \d y.
\end{align*}
By dominated convergence theorem and the pointwise convergence of $\E[X_\eps(x)X_\delta(y)]$ to $C(x,y)$, the above implies that $(\int_D \wick{X_\eps(x)^k} f(x) \d x)_{\eps\in(0,1)}$ is Cauchy in $L^2$. This proves \eqref{E:T_main1} when $\gamma=0$. 
It also shows that
\begin{align*}
    \sup_{\eps \in (0,1)}\E\Big[\Big( \int_D \wick{X_\eps(x)^k} f(x) \d x\Big)^2\Big]
    & \le k! \|f\|_\infty^2 \int_{\mathrm{supp}(f)^2} \Big(\log_+\frac1{|x-y|}+C\Big)^k \d x \d y\\
    & \le C(f,d) k! \int_0^1 r^{d-1} |\log r|^k \d r
    = C(f,d) k!^2 d^{-k-1}
\end{align*}
where the last equality follows from successive integration by parts. In particular, if $|\gamma'|<\sqrt{d}$,
\[
\sum_{k \ge 0} \frac{|\gamma'|^k}{k!} \sup_{\eps \in (0,1)} \E\Big[\Big| \int_D \wick{X_\eps(x)^k} f(x) \d x\Big|\Big] < \infty.
\]
The $L^1$ convergence of \eqref{E:T_main3} when $|\gamma'|<\sqrt{d}$ then follows. A simple extension of the above shows that the convergence also holds in $L^2$. This is however restricted to the case $\gamma=0$ since, for any other value of $\gamma$, some part of the disc of convergence $\{\gamma'\in\C:|\gamma'-\gamma|<\sqrt{d}-|\gamma|/\sqrt2\}$ lies outside of the $L^2$-phase $\{\gamma'\in\C:|\gamma'|<\sqrt{d}\}$.
\qed

\subsection{Basic properties of almost \texorpdfstring{$*$}{star}-scale invariant fields}

This section gathers a few properties of almost $*$-scale invariant fields.

\begin{lemma}[Scaling covariance 1]\label{L:scaling_covariance}
For $r_0 = e^{-t_0} \in (0,1]$,
\begin{align}\label{E:scaling_covariance_star_scale}
    (X_t^\af(r_0 \cdot),X_{t_0}^\af) & \overset{\mathrm{(d)}}{=} (X_{t - t_0}^{\af r_0^\alpha}(\cdot) + X_{t_0}^\af(r_0\cdot), X_{t_0}^\af), \qquad t \in [t_0,\infty],\\
    \text{and} \qquad (X_\eps^\af(r_0 \cdot),X_{t_0}^\af) & \overset{\mathrm{(d)}}{=} (X_{\eps/r_0}^{\af r_0^\alpha}(\cdot) + X_{t_0,\eps}^\af(r_0 \cdot), X_{t_0}^\af), \qquad \eps \in (0,r_0],
    \label{E:scaling_covariance}
\end{align}    
where, on the right hand side of \eqref{E:scaling_covariance_star_scale}, respectively \eqref{E:scaling_covariance}, the fields $X_{t - t_0}^{\af r_0^\alpha}$ and $X_{t_0}^\af$ are independent,
respectively the fields $X_{\eps/r_0}^{\af r_0^\alpha}$ and $X_{t_0}^\af$ are independent and $X_{t_0,\eps}^\af = X_{t_0}^\af * \varphi_\eps$.
\end{lemma}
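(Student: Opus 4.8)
The plan is to argue directly with the white-noise representations \eqref{E:X_WN}--\eqref{E:X_approx_star_scale}: since all the fields involved are Gaussian, both identities reduce to a change of variables in a white-noise integral, combined with the elementary fact that white noise restricted to disjoint regions of $\R^d\times(0,\infty)$ gives independent Gaussian families.

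First I would establish \eqref{E:scaling_covariance_star_scale}. Fix $r_0=e^{-t_0}\in(0,1]$ and $t\in[t_0,\infty]$, and write
\[
X_t^\af(r_0 x)=\Delta(x)+X_{t_0}^\af(r_0 x),\qquad \Delta(x):=X_t^\af(r_0 x)-X_{t_0}^\af(r_0 x).
\]
By \eqref{E:X_approx_star_scale}, $\Delta$ is an integral of the white noise $W$ over $\R^d\times(t_0,t)$, while $X_{t_0}^\af$ is an integral over the disjoint set $\R^d\times(0,t_0)$; hence the process $\Delta$ is independent of the whole field $X_{t_0}^\af$. It remains to identify the law of $\Delta$. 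In the integral defining $\Delta(x)$ I substitute $u=u'+t_0$ and $z=r_0 z'$; comparing covariances, $\Delta$ has the same law as the process
\[
x\mapsto r_0^{d/2}\int_{\R^d\times(0,t-t_0)}e^{d(u'+t_0)/2}\,\tilde\kf\big(e^{u'+t_0}r_0(z'-x)\big)\sqrt{1-\af e^{-\alpha(u'+t_0)}}\,\d W'(z',u')
\]
for a space-time white noise $W'$. Using $r_0=e^{-t_0}$ one checks the identities $r_0^{d/2}e^{d(u'+t_0)/2}=e^{du'/2}$, $\ e^{u'+t_0}r_0=e^{u'}$ and $\af e^{-\alpha(u'+t_0)}=\af r_0^\alpha e^{-\alpha u'}$, which turn the integrand into exactly the one defining $X_{t-t_0}^{\af r_0^\alpha}$ in \eqref{E:X_approx_star_scale}. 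Replacing $\Delta$ by an independent copy of $X_{t-t_0}^{\af r_0^\alpha}$ yields \eqref{E:scaling_covariance_star_scale}; the case $t=\infty$ covers the full field $X^\af=X^\af_\infty$.

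Then I would deduce \eqref{E:scaling_covariance} by commuting the spatial rescaling with the mollification. For any field $Y$, the substitution $y=r_0 y'$ in the convolution together with the identity $r_0^d\varphi_\eps(r_0\,\cdot)=\varphi_{\eps/r_0}$ gives $(Y*\varphi_\eps)(r_0\,\cdot)=\big(Y(r_0\,\cdot)\big)*\varphi_{\eps/r_0}$ (valid for $\eps\in(0,r_0]$, so that $\eps/r_0\le 1$). Applying this with $Y=X^\af$, then feeding in \eqref{E:scaling_covariance_star_scale} at $t=\infty$ (jointly with $X_{t_0}^\af$), and finally applying the same identity backwards to $Y=X^{\af r_0^\alpha}$ and to $Y=X_{t_0}^\af$, one obtains
\begin{align*}
X_\eps^\af(r_0\,\cdot)
&=\big(X^\af(r_0\,\cdot)\big)*\varphi_{\eps/r_0}
\overset{\mathrm{(d)}}{=}X^{\af r_0^\alpha}*\varphi_{\eps/r_0}+\big(X_{t_0}^\af(r_0\,\cdot)\big)*\varphi_{\eps/r_0}\\
&=X_{\eps/r_0}^{\af r_0^\alpha}(\cdot)+X_{t_0,\eps}^\af(r_0\,\cdot),
\end{align*}
jointly with $X_{t_0}^\af$ and with $X_{\eps/r_0}^{\af r_0^\alpha}$ independent of $X_{t_0}^\af$, which is precisely \eqref{E:scaling_covariance}. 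Here the last two equalities use $X^{\af r_0^\alpha}*\varphi_{\eps/r_0}=X_{\eps/r_0}^{\af r_0^\alpha}$ and $\big(X_{t_0}^\af*\varphi_\eps\big)(r_0\,\cdot)=X_{t_0,\eps}^\af(r_0\,\cdot)$ from the definitions in \eqref{E:X_approx_mollifier}.

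The main obstacle is nothing more than careful bookkeeping: getting the white-noise Jacobian factor $r_0^{d/2}$ right and checking that the prefactor $e^{du/2}$, the argument $e^u(z-x)$ of $\tilde\kf$, and the weight $1-\af e^{-\alpha u}$ all transform consistently under the simultaneous scaling of space by $r_0$ and shift of the scale variable by $t_0$. The only conceptual point worth stating explicitly is that $\Delta$ is independent of the \emph{entire} process $X_{t_0}^\af$ (not merely of its values at finitely many points), which is immediate from the disjointness of the white-noise supports.
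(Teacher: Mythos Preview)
Your proof is correct and follows essentially the same approach as the paper's: establish \eqref{E:scaling_covariance_star_scale} by splitting off the increment $X_t^\af-X_{t_0}^\af$ (independent of $X_{t_0}^\af$ by disjointness of the white-noise supports) and identifying its law via the change of variables $u\leftarrow u'+t_0$, $z\leftarrow r_0 z'$, then deduce \eqref{E:scaling_covariance} by mollifying the case $t=\infty$. Your treatment of the second step is in fact more explicit than the paper's, which simply states that \eqref{E:scaling_covariance} ``follows by integrating \eqref{E:scaling_covariance_star_scale} with $t=\infty$ against the mollifier $\varphi_\eps$''; your use of the rescaling identity $(Y*\varphi_\eps)(r_0\,\cdot)=(Y(r_0\,\cdot))*\varphi_{\eps/r_0}$ makes this step transparent.
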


\begin{proof}
    Let $t_0 = -\log r_0 \ge 0$ and $t \in [t_0,\infty]$.
    By definition \eqref{E:X_WN}, $X^\af_t - X^\af_{t_0}$ and $X^\af_{t_0}$ correspond to white noise integrals on disjoint space-time subsets. These fields are thus independent. Moreover, doing the change of variables $v = u-t_0$ and $w = r_0^{-1} z$, we have
    \begin{align*}
        & (X^\af_t - X^\af_{t_0})(r_0 x) = \int_{\R^d \times (t_0,t)} e^{du/2} \tilde\kf(e^u(z-r_0x)) \sqrt{1-\af e^{-\alpha u}} \d W(z,u)\\
        & = \int_{\R^d \times (0,t-t_0)} e^{dv/2} \tilde\kf(e^v(w-x)) \sqrt{1-\af r_0^\alpha e^{-\alpha v}} r_0^{-d/2} \d W(r_0 w,v+t_0) \\
        & \overset{(d)}{=} \int_{\R^d \times (0,t-t_0)} e^{dv/2} \tilde\kf(e^v(w-x)) \sqrt{1-\af r_0^\alpha e^{-\alpha v}} \d W(w,v)
        \overset{(d)}{=} X^{\af r_0^\alpha}_{t-t_0}(x).
    \end{align*}
    This concludes the proof of \eqref{E:scaling_covariance_star_scale}. The proof of \eqref{E:scaling_covariance} then follows by integrating \eqref{E:scaling_covariance_star_scale} with $t=\infty$ against the mollifier $\varphi_\eps$.
\end{proof}

\begin{lemma}[Scaling covariance 2]\label{L:scaling_covariance_Hermite}
    Let $r_0 = e^{-t_0} \in (0,1]$, $\eps \in (0,r_0]$, $y \in \R^d$ and $k \ge 0$. 
    Then $(: X_\eps^\af(y+r_0 \cdot)^k e^{\gamma X_\eps^\af(y+r_0 \cdot)}:, X_{t_0}^\af(y+\cdot))$ has the same distribution as
    \begin{equation}
    \Big( \sum_{j=0}^k \binom{k}{j} :(X_{\eps/r_0}^{\af r_0^\alpha})^j e^{\gamma X_{\eps/r_0}^{\af r_0^\alpha}}: \times
    :X_{t_0,\eps}^\af(r_0\cdot)^{k-j} e^{\gamma X_{t_0,\eps}^\af(r_0\cdot)}: , X_{t_0}^\af \Big)
    \end{equation}
    where the fields $X_{\eps/r_0}^{\af r_0^\alpha}$ and $X_{t_0}^\af$ are independent and $X_{t_0,\eps}^\af = X_{t_0}^\af *\varphi_\eps$.
\end{lemma}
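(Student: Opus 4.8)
The plan is to reduce Lemma~\ref{L:scaling_covariance_Hermite} to the scaling covariance of the underlying Gaussian fields, i.e.\ to \eqref{E:scaling_covariance}, together with the algebraic identity \eqref{E:L_Hermite_umbral2} for Hermite polynomials applied to a sum of a random variable and a ``deterministic'' shift. First I would fix $r_0 = e^{-t_0}\in(0,1]$, $\eps\in(0,r_0]$, $y\in\R^d$, $k\ge0$, and write everything in terms of the variable $z$ so that the point of evaluation is $y + r_0 z$. By translation invariance of the white noise we may assume $y=0$ (the covariance kernels \eqref{E:covariance_star_almost} depend only on $x-y$, and $\varphi_\eps$ is fixed), so it suffices to prove the statement for $y=0$. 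By \eqref{E:scaling_covariance} of Lemma~\ref{L:scaling_covariance}, there is a coupling in which, jointly in $z$,
\begin{equation}\label{E:plan_coupling}
X_\eps^\af(r_0 z) = Y_{\eps/r_0}(z) + R_\eps(z),
\qquad R_\eps(z) := X_{t_0,\eps}^\af(r_0 z),
\end{equation}
where $Y_{\eps/r_0} := X_{\eps/r_0}^{\af r_0^\alpha}$ is independent of $X_{t_0}^\af$ (hence of the whole field $R_\eps$, which is a deterministic functional of $X_{t_0}^\af$), and moreover the pair $(X_\eps^\af(r_0\,\cdot\,), X_{t_0}^\af(\,\cdot\,))$ equals in law $(Y_{\eps/r_0}(\,\cdot\,) + R_\eps(\,\cdot\,), X_{t_0}^\af(\,\cdot\,))$.

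Next I would unpack the definition of the normalised Wick exponential. By the first formula in \eqref{E:myWick}, for a centred Gaussian $Z$ with variance $\sigma^2$ one has $:\!Z^k e^{\gamma Z}\!: \; = \sigma^k H_k\big((Z-\gamma\sigma^2)/\sigma\big)e^{\gamma Z - \gamma^2\sigma^2/2}$. Applied to $Z = X_\eps^\af(r_0 z)$, whose variance I would call $\tau^2 = \sigma_\eps^{\af}{}^2$ (note $\sigma_\eps^\af$ does not depend on the spatial point by stationarity), this gives an explicit expression involving $H_k$ evaluated at a point of the form $(Y_{\eps/r_0}(z) + R_\eps(z) - \gamma\tau^2)/\tau$. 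Now I would split the exponential $e^{\gamma(Y+R) - \gamma^2\tau^2/2}$ using the variance decomposition $\tau^2 = \sigma_{\eps/r_0}^{\af r_0^\alpha}{}^2 + \sigma_{t_0,\eps}^{\af}{}^2$ — which holds precisely because $Y_{\eps/r_0}$ and $R_\eps$ are independent and their variances sum to that of $X_\eps^\af(r_0 z)$ by \eqref{E:plan_coupling} — so that $e^{\gamma(Y+R)-\gamma^2\tau^2/2}$ factors as $:\!e^{\gamma Y_{\eps/r_0}}\!:\;\cdot\;:\!e^{\gamma R_\eps}\!:$. For the Hermite factor, I would write the argument of $H_k$ as a sum $u + v$ where $u$ collects the ``$Y$-part'' centred by $\gamma$ times its own variance and $v$ collects the ``$R$-part'' centred similarly, apply \eqref{E:L_Hermite_umbral2} to get $H_k(u+v) = \sum_{j=0}^k \binom{k}{j} u^{k-j} H_j(v)$ (or the symmetric form, matching the indexing in the statement), and then recognise, using the first identity in \eqref{E:myWick} again in reverse, that $u^{k-j}$ paired with part of the exponential is $:\!Y_{\eps/r_0}^{k-j} e^{\gamma Y_{\eps/r_0}}\!:$ divided by the appropriate power of $\sigma_{\eps/r_0}^{\af r_0^\alpha}$ — wait, more carefully: $u$ here is a \emph{monomial} argument so I should use the degenerate form, namely that $\sigma^k (\,\cdot\,/\sigma)^k = (\cdot)^k$ is what appears when one variance is ``sent to zero''; concretely the $R$-factor, being a mollification of $X_{t_0}^\af$ at scale $\eps \le r_0$, carries the full Hermite polynomial $H_j$ while the $Y$-factor carries the monomial, or vice versa, and a short bookkeeping of which piece is which will fix the binomial indexing to match the displayed formula. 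Collecting the two factorisations and re-expanding with \eqref{E:L_Hermite_umbral2} then yields exactly $\sum_{j=0}^k \binom{k}{j} :\!(X_{\eps/r_0}^{\af r_0^\alpha})^j e^{\gamma X_{\eps/r_0}^{\af r_0^\alpha}}\!: \cdot :\!X_{t_0,\eps}^\af(r_0\,\cdot\,)^{k-j} e^{\gamma X_{t_0,\eps}^\af(r_0\,\cdot\,)}\!:$, and the joint law with $X_{t_0}^\af$ is preserved because the whole manipulation was carried out within the coupling \eqref{E:plan_coupling}.

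The routine parts are the variance bookkeeping and making sure the centring terms inside $H_k$ are distributed correctly between $u$ and $v$; these are purely algebraic once the independent decomposition \eqref{E:plan_coupling} is in hand. The one point requiring a little care — and the most likely source of a sign or indexing slip — is the application of \eqref{E:L_Hermite_umbral2}: one must check that the ``shift'' variable plays the role of $v$ (the Hermite argument) and the other the role of $u$ (the plain monomial) consistently with how $:\!Z^k e^{\gamma Z}\!:$ degenerates when a Gaussian component has zero variance, i.e.\ that $:\!Z^k e^{\gamma Z}\!:$ with $Z$ deterministic-zero equals $Z^k$. I would verify this on the generating-function level: multiply the claimed identity by $t^k/k!$, sum over $k$, use \eqref{E:Hermite_generating_function_generalised} on the right-hand side and \eqref{E:Hermite_generating_function} (together with the exponential factorisation above) on the left-hand side, and check the two generating functions agree — exactly the style of argument already used to prove \eqref{E:Hermite_generating_function_generalised} and \eqref{E:L_Hermite_umbral2} in Section~\ref{SS:Hermite}. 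That check closes the proof.
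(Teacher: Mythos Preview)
Your overall strategy---reduce to $y=0$ by translation invariance, invoke the coupling \eqref{E:scaling_covariance}, factor the exponential via the variance decomposition, and then expand the Hermite factor---is exactly the paper's route. The gap is in the choice of Hermite identity. You reach for \eqref{E:L_Hermite_umbral2}, which expresses $H_k(u+v)$ as a sum of \emph{monomials} in $u$ times Hermite polynomials in $v$; but here \emph{both} summands $Y_{\eps/r_0}$ and $R_\eps$ are genuine Gaussians with positive variance, so both factors on the right-hand side of the claimed identity are full Wick products of the form $\sigma^m H_m\big((Z-\gamma\sigma^2)/\sigma\big)\,e^{\gamma Z - \gamma^2\sigma^2/2}$, not monomials. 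Concretely, if you write the argument of $H_k$ as $(Y-\gamma\sigma_Y^2)/\tau + (R-\gamma\sigma_R^2)/\tau$ with $\tau^2 = \sigma_Y^2 + \sigma_R^2$ and apply \eqref{E:L_Hermite_umbral2}, the Hermite piece that comes out is $H_{k-j}\big((R-\gamma\sigma_R^2)/\tau\big)$, which is \emph{not} the Hermite factor in $:\!R^{k-j}e^{\gamma R}\!:$ because the scale inside is $\tau$, not $\sigma_R$. Your hesitation (``wait, more carefully\ldots'') is well-founded: neither piece degenerates to a monomial, and no amount of bookkeeping of ``which variable plays $u$ and which plays $v$'' in \eqref{E:L_Hermite_umbral2} will fix the mismatched scale.

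The paper resolves this by using \eqref{E:L_Hermite_umbral1} instead, with $\rho = \sigma_{\eps/r_0}^{\af r_0^\alpha}/\sigma_\eps^\af$, $u = (Y-\gamma\sigma_Y^2)/\sigma_Y$ and $v = (R-\gamma\sigma_R^2)/\sigma_R$. The factors $\rho^j\sqrt{1-\rho^2}^{\,k-j}$ in \eqref{E:L_Hermite_umbral1} are precisely what converts the overall $\tau^k$ into $\sigma_Y^j\,\sigma_R^{k-j}$, so that both pieces come out as bona fide Wick products with the correct internal scales. Your closing suggestion---sum the claimed identity against $t^k/k!$, use $\sum_k \tfrac{t^k}{k!}\, :\!Z^k e^{\gamma Z}\!: \,=\, :\!e^{(\gamma+t)Z}\!:$ and the multiplicativity $:\!e^{(\gamma+t)(Y+R)}\!: \,=\, :\!e^{(\gamma+t)Y}\!:\, :\!e^{(\gamma+t)R}\!:$ for independent $Y,R$---is a valid alternative that bypasses the issue entirely and would close the proof cleanly; it is in fact the generating-function route by which \eqref{E:L_Hermite_umbral1} itself is proved.
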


\begin{proof}
To ease notations, we only consider the first marginal.
    By \eqref{E:scaling_covariance} and translation invariance,
$
X_\eps^\af(y+r_0\cdot) \overset{(d)}{=} X_{\eps/r_0}^{\af r^\alpha} + X_{t_0,\eps}^\af(r_0\cdot),
$
where the joint law is as in the statement.
By \eqref{E:L_Hermite_umbral1} applied to
\[
u = \frac{X_{\eps/r_0}^{\af r_0^\alpha} - \gamma (\sigma_{\eps/r_0}^{\af r_0^\alpha})^2}{\sigma_{\eps/r_0}^{\af r_0^\alpha}},
\quad v = \frac{X_{t_0,\eps}^\af(r_0\cdot) - \gamma (\sigma_{t_0,\eps}^\af)^2}{\sigma_{t_0,\eps}^\af}
\quad \text{and} \quad \rho = \frac{\sigma_{\eps/r_0}^{\af r^\alpha}}{\sigma_{\eps}^\af},
\]
we have
\begin{align*}
& (\sigma_\eps^a)^k H_k \Big( \frac{X_\eps^\af(y+r_0\cdot) - \gamma (\sigma_\eps^a)^2}{\sigma_\eps^a} \Big)
\overset{\mathrm{(d)}}{=} (\sigma_\eps^a)^k H_k \Big( \frac{X_{\eps/r_0}^{\af r^\alpha} + X_{t_0,\eps}^\af(r_0\cdot) - \gamma (\sigma_\eps^a)^2}{\sigma_\eps^a} \Big)
\\
& = \sum_{j=0}^k \binom{k}{j} (\sigma_{\eps/r_0}^{\af r_0^\alpha})^j H_j \Big( \frac{X_{\eps/r_0}^{\af r_0^\alpha} - \gamma (\sigma_{\eps/r_0}^{\af r_0^\alpha})^2}{\sigma_{\eps/r_0}^{\af r_0^\alpha}} \Big)  (\sigma_{t_0,\eps}^a)^{k-j} H_{k-j} \Big( \frac{X_{t_0,\eps}^\af(r_0\cdot) - \gamma (\sigma_{t_0,\eps}^\af)^2}{\sigma_{t_0,\eps}^\af} \Big)
\end{align*}
which is equivalent to the stated claim.
\end{proof}

\begin{lemma}\label{L:covariance}
    Writing $\log_+(\cdot) = \max(\log (\cdot), 0)$, we have for all $x,y 
    \in [0,1]^d$,
    \begin{gather}\label{E:L_covariance1}
        C_{\eps,\delta}(x,y) = \log_+ \frac{1}{\eps \vee \delta \vee |x-y|} + O(1), \\
        \label{E:L_covariance2}
        C_{s,t}(x,y) = \log_+ \frac1{e^{-s} \vee e^{-t} \vee |x-y|} + O(1), \\
        \label{E:L_covariance3}
        C_{t,\eps}(x,y) = \log_+ \frac{1}{\eps \vee e^{-t} \vee |x-y|} + O(1), \\
        \label{E:L_covariance5}
        \sup_{\af \in [0,1]} \sup_{t \ge 0} \sup_{\eps \in [0,1]} |(\sigma_{t,\eps}^\af)^2 - t \wedge |\log \eps| | < \infty, \\
        \label{E:L_covariance6}
        \sup_{\af \in [0,1]} \sup_{t \ge 0} \sup_{\eps \in [0,1]} \sup_{x \in Q} |\E[X_t^\af(0) X_{t,\eps}^\af( (e^{-t} \vee \eps) x)] - t \wedge |\log \eps| | < \infty.
    \end{gather}
    Moreover, for all $x,y \in [0,1]^d$,
    \begin{equation}\label{E:L_covariance7}
        \lim_{\eps_1,\eps_2\to 0} C_{\eps_1,\eps_2}(x,y) = C(x,y).
    \end{equation}
\end{lemma}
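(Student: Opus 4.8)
The plan is to reduce everything to the basic formula for the $*$-scale invariant covariance and then control the error terms coming from the mollification and from the extra factor $1-\af e^{-\alpha u}$. For the first three identities, I would start from \eqref{E:covariance_star_almost} and its truncated analogue $C^\af_{s,t}(x,y) = \int_0^{s\wedge t} \kf(e^u(x-y))(1-\af e^{-\alpha u})\,\d u + (\text{correction from }\tilde\kf)$; more precisely, from the white-noise representations \eqref{E:X_WN}--\eqref{E:X_approx_star_scale} one computes $C^\af_{s,t}(x,y) = \int_0^{s\wedge t} (\tilde\kf * \tilde\kf)(e^u(x-y)) e^{du}\cdot(\ldots)\,\d u$, and since $\tilde\kf * \tilde\kf = \Fc^{-1}(\Fc\kf) = \kf$ (using $\tilde\kf = \Fc^{-1}\sqrt{\Fc\kf}$ and that $\kf$ is even), this is exactly $\int_0^{s\wedge t}\kf(e^u(x-y))(1-\af e^{-\alpha u})\,\d u$. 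Because $\kf$ is supported in the unit ball, bounded, continuous with $\kf(0)=1$, the substitution $v = u + \log|x-y|$ gives $\int_0^{s\wedge t}\kf(e^u(x-y))\,\d u = \log_+\frac{1}{e^{-(s\wedge t)}\vee|x-y|} + O(1)$, where the $O(1)$ is uniform in $x,y\in[0,1]^d$; the factor $(1-\af e^{-\alpha u})$ contributes only a further uniformly bounded correction since $\int_0^\infty \af e^{-\alpha u}\,\d u = \af/\alpha \le 1/\alpha$. This proves \eqref{E:L_covariance2}. For \eqref{E:L_covariance1} and \eqref{E:L_covariance3}, one convolves \eqref{E:X_WN} with $\varphi_\eps$ (and $\varphi_\delta$), which replaces $\tilde\kf(e^u(\cdot-x))$ by $\tilde\kf(e^u(\cdot-x))*\varphi_\eps$; the key point is that mollification at scale $\eps$ effectively truncates the $u$-integral at $u\approx|\log\eps|$ with a uniformly bounded error, because $\varphi_\eps$ is supported in $B(0,\eps)$ and $\int\varphi = 1$, so for $e^{-u}\gg\eps$ the convolution barely changes $\tilde\kf$ while for $e^{-u}\ll\eps$ the contributions are $O(1)$ in total (here one uses the Sobolev regularity $\hat\kf(\xi)\lesssim(1+|\xi|^2)^{-s}$ with $s>(d+1)/2$ to control the oscillatory part). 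This replaces $e^{-(s\wedge t)}$ by $\eps\vee\delta$ and yields \eqref{E:L_covariance1}, \eqref{E:L_covariance3}.

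For \eqref{E:L_covariance5}, note $(\sigma^\af_{t,\eps})^2 = C^\af_{t,\eps}(x,x)$ by stationarity (it does not depend on $x$), and apply \eqref{E:L_covariance3} with $y=x$: $C^\af_{t,\eps}(x,x) = \log_+\frac{1}{\eps\vee e^{-t}} + O(1) = (t\wedge|\log\eps|) + O(1)$, with the $O(1)$ uniform in $\af\in[0,1]$, $t\ge0$, $\eps\in[0,1]$ — one just has to check the error bounds in the derivation above are genuinely uniform in these parameters, which they are since $\kf$, $\varphi$ and $\alpha$ are fixed. For \eqref{E:L_covariance6}, the quantity is $\E[X^\af_t(0)X^\af_{t,\eps}((e^{-t}\vee\eps)x)] = C^\af_{t}(\,\cdot\,)$ convolved in the second variable with $\varphi_\eps$ and evaluated at the pair $(0, (e^{-t}\vee\eps)x)$; since $|(e^{-t}\vee\eps)x| \le \sqrt d\,(e^{-t}\vee\eps)$ for $x\in Q = (-1/2,1/2)^d$ (or whatever cube is meant), the two points are within distance $O(e^{-t}\vee\eps)$, so the same computation as for \eqref{E:L_covariance3} gives $\log_+\frac{1}{\eps\vee e^{-t}} + O(1) = (t\wedge|\log\eps|)+O(1)$ uniformly. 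Finally \eqref{E:L_covariance7}: $C_{\eps_1,\eps_2}(x,y)$ is an integral of $(\tilde\kf*\varphi_{\eps_1})\cdot(\tilde\kf*\varphi_{\eps_2})$-type kernels against the full $u\in(0,\infty)$ range, and as $\eps_1,\eps_2\to0$ the mollifications disappear and we recover $C(x,y) = \int_0^\infty\kf(e^u(x-y))(1-\af e^{-\alpha u})\,\d u$ pointwise by dominated convergence (the integrand is dominated uniformly for $|x-y|$ bounded below — for $x\ne y$ — and for $x=y$ the convergence is monotone/explicit); one then observes this limit equals the stated log-correlated covariance $C$.

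The main obstacle is the $\eps$-mollification estimate: showing precisely that convolving with $\varphi_\eps$ truncates the log-divergent $u$-integral at scale $|\log\eps|$ with an $O(1)$ error that is uniform over $x,y\in[0,1]^d$, over $\eps,\delta\in(0,1)$, and (for \eqref{E:L_covariance5}--\eqref{E:L_covariance6}) over $\af\in[0,1]$ and $t\ge0$. This is where the hypothesis $\hat\kf(\xi)\lesssim(1+|\xi|^2)^{-s}$ with $s>(d+1)/2$ enters — it guarantees $\tilde\kf = \Fc^{-1}\sqrt{\Fc\kf}$ is continuous and decays fast enough that the high-frequency (small-scale) part of the convolution is summably small. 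Everything else is a routine change of variables and bookkeeping of uniformly bounded remainders; I would package the mollifier estimate as the one genuine computation and treat the rest as immediate consequences. One should double check whether the intended statement of \eqref{E:L_covariance5}--\eqref{E:L_covariance6} uses $|\log\eps|$ with the convention $|\log 0| = +\infty$, in which case $t\wedge|\log\eps| = t$ at $\eps=0$ and the bound is just \eqref{E:L_covariance2} with $y=x$.
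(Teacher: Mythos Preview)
Your proposal is correct and in fact considerably more detailed than the paper's own treatment, which simply declares all of these estimates ``very standard'', refers to \cite[Proposition~4.1]{zbMATH07172346}, and adds a single remark: uniformity in $\af\in[0,1]$ holds because the covariances are monotone in $\af$, so one can sandwich between the $\af=0$ and $\af=1$ cases. Your route to the same uniformity---bounding the correction $\int_0^\infty \af e^{-\alpha u}\kf(\cdot)\,\d u$ directly by $\|\kf\|_\infty/\alpha$---is equally valid and arguably more transparent. The one place you are slightly overcautious is in invoking the Sobolev decay $\hat\kf(\xi)\lesssim(1+|\xi|^2)^{-s}$ for the mollification error: for the covariance bounds \eqref{E:L_covariance1}--\eqref{E:L_covariance6} one only needs that $\kf$ is bounded, continuous at $0$ with $\kf(0)=1$, and compactly supported (the mollified log-singularity is handled by local integrability of $\log|z|$ against the smooth $\varphi$); the Sobolev hypothesis is used elsewhere in the paper to make sense of the regularised fields, not here.
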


\begin{proof}
    All these estimates are very standard; see e.g. \cite[Proposition 4.1]{zbMATH07172346}. To see that they are uniform in $\af \in [0,1]$, one can always upper bound (resp. lower bound) the covariances by the corresponding ones with $\af = 0$ (resp. $\af=1$).
\end{proof}

We define the following filtrations in terms of the white noise $W$ defining $X$ (see \eqref{E:X_WN}):
for all $x,y \in \R^d$, $s,t \ge 0$ and $\eps \in [0,1]$,
\begin{equation}
\label{E:filtration2}
\Fc_t(x,\eps) = \sigma(W(z,u): z \in B(x,\eps+e^{-u}), u \in (0,t))
\quad \text{and} \quad
\Fc_{s,t}(x,y,\eps) = \sigma(\Fc_s(x,\eps) \cup \Fc_t(y,\eps)).
\end{equation}
By construction, $X_t(z)$ is $\Fc_t(x,\eps)$-measurable for all $z \in B(x,\eps)$.

\begin{lemma}\label{L:independence}
Let $\eps \in (0,1]$ and $s,t \ge 0$. For all $x,y \in \R^d$ such that $|x-y| \ge 2\eps + e^{-s}+e^{-t}$, the random variables $X_\eps(x) - \E[X_\eps(x)\vert \Fc_s(x,\eps)]$ and $X_\eps(y) - \E[X_\eps(y)\vert \Fc_t(y,\eps)]$ are independent and independent of $\Fc_{s,t}(x,y,\eps)$.
\end{lemma}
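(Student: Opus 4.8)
The plan is to exploit the white-noise representation \eqref{E:X_WN} of the field together with the locality of the mollifier. First I would write, for a point $x$ and any $z\in B(x,\eps)$, the decomposition $X_\eps(z)=\int \psi_{z,\eps}(w,u)\,\d W(w,u)$ where $\psi_{z,\eps}(w,u)=\int e^{du/2}\tilde\kf(e^u(w-z'))\varphi_\eps(z'-z)\,\d z'$, so that the integrand of $X_\eps(x)$ is supported, in the $w$-variable, inside $B(x,\eps+e^{-u})$ for each height $u$ (because $\tilde\kf$ — hence the kernel after convolution with $\varphi_\eps$ — is compactly supported in the ball of radius $e^{-u}$ around $z'$, and $|z'-x|\le\eps$). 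From this, $X_\eps(x)$ is measurable with respect to the white noise restricted to the set $S_x:=\{(w,u): w\in B(x,\eps+e^{-u}),\ u>0\}$, and the ``low part'' $\E[X_\eps(x)\mid\Fc_s(x,\eps)]$ is measurable with respect to the white noise on $S_x^{\le s}:=S_x\cap\{u<s\}$, which is exactly what generates $\Fc_s(x,\eps)$.

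Next I would identify the residual $R_x:=X_\eps(x)-\E[X_\eps(x)\mid\Fc_s(x,\eps)]$. Since $X_\eps(x)$ is a Gaussian white-noise integral and $\Fc_s(x,\eps)$ is generated by the white noise on $S_x^{\le s}$, the conditional expectation is just the orthogonal projection of the integrand onto functions supported on $S_x^{\le s}$; hence $R_x=\int_{S_x\setminus S_x^{\le s}} \psi_{x,\eps}(w,u)\,\d W(w,u)$ is a white-noise integral over $S_x\cap\{u\ge s\}\subseteq\{(w,u): |w-x|\le\eps+e^{-u},\ u\ge s\}\subseteq\{(w,u):|w-x|\le\eps+e^{-s}\}$. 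Similarly $R_y:=X_\eps(y)-\E[X_\eps(y)\mid\Fc_t(y,\eps)]$ is a white-noise integral over a set contained in $\{(w,u):|w-y|\le\eps+e^{-t}\}$. The hypothesis $|x-y|\ge 2\eps+e^{-s}+e^{-t}$ forces these two spatial regions $\{|w-x|\le\eps+e^{-s}\}$ and $\{|w-y|\le\eps+e^{-t}\}$ to be disjoint, and moreover each is disjoint from the set $\{(w,u):w\in B(x,\eps+e^{-u}),u<s\}\cup\{(w,u):w\in B(y,\eps+e^{-u}),u<t\}$ that generates $\Fc_{s,t}(x,y,\eps)$ — for the first piece because of the height restriction ($u\ge s$ versus $u<s$), for the cross piece because of the spatial separation. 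Since white-noise integrals over disjoint space-time sets are independent Gaussian variables, $R_x$, $R_y$ and $\Fc_{s,t}(x,y,\eps)$ are jointly independent, which is the claim.

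The only slightly delicate point is the bookkeeping of the support: one must check that convolving the kernel $w\mapsto e^{du/2}\tilde\kf(e^u(w-z'))$ (supported in $|w-z'|\le e^{-u}$, using that $\kf$, hence $\sqrt{\Fc\kf}$'s inverse transform $\tilde\kf$, is compactly supported in the unit ball — this is part of the standing assumptions on $\kf$ in Section~\ref{S:setup}) with $\varphi_\eps$ (supported in $|z'-z|\le\eps$) and then evaluating at $z=x$ yields something supported in $|w-x|\le\eps+e^{-u}$; this is just the triangle inequality. The remaining ingredient — that the conditional expectation of a white-noise integral given the $\sigma$-algebra of the noise on a subset equals the integral of the restriction of the integrand — is the standard Gaussian projection fact and can be invoked directly. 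I expect this support/height bookkeeping to be the main (and only real) obstacle, and it is entirely routine.
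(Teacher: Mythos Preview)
Your approach is exactly the paper's: its entire proof is the single line ``The condition implies that the balls $B(x,\eps+e^{-s})$ and $B(y,\eps+e^{-t})$ are disjoint.'' You have simply fleshed this out.

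Two caveats. First, the assertion that $\tilde\kf=\Fc^{-1}\sqrt{\Fc\kf}$ is supported in the unit ball is \emph{not} among the stated hypotheses on the seed --- only $\kf$ is assumed compactly supported, and taking a square root on the Fourier side generally destroys compact support. The paper implicitly relies on this too (see the measurability claim just before the lemma), so this is a shared loose end rather than a flaw specific to your write-up.

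Second, and more substantively, your ``spatial separation'' justification for the cross piece does not quite close. The support $A_x$ of $R_x$ lives at heights $u\ge s$, where indeed $|w-x|<\eps+e^{-u}\le\eps+e^{-s}$; but the $y$-cone generating $\Fc_t(y,\eps)$ has, at the same height $u\in[s,t)$ (possible when $s<t$), spatial radius $\eps+e^{-u}$ as well. Disjointness at such a $u$ would require $|x-y|\ge 2\eps+2e^{-u}$, and at $u=s$ this reads $|x-y|\ge 2\eps+2e^{-s}$, which is strictly stronger than the assumed $|x-y|\ge 2\eps+e^{-s}+e^{-t}$. So the support of $R_x$ can genuinely meet the $y$-cone, and the independence of $R_x$ from $\Fc_t(y,\eps)$ is not delivered by the argument as written. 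The paper's one-liner glosses over the same point; one clean fix is to strengthen the hypothesis to $|x-y|\ge 2\eps+2e^{-s\wedge t}$ (which is what the application in the proof of Lemma~\ref{L:boundA} actually provides, since there $n,m\ge\nf+1$ with $e^{-\nf}$ comparable to $|x-y|$), or equivalently to work with a cylindrical filtration $\sigma(W(z,u):|z-y|<\eps+e^{-t},\,u<t)$, for which your spatial-separation step goes through verbatim under the stated bound.
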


\begin{proof}
The condition implies that the balls $B(x,\eps+e^{-s})$ and $B(y,\eps+e^{-t})$ are disjoint.
\end{proof}

\subsection{Two elementary lemmas}

We record here two elementary results for ease of future reference.

\begin{lemma}\label{L:elementary_sum}
    Let $c_1>0$. For any $\eta>0$, there exists $C=C(\eta,c_1)>0$ such that for all $k \ge 1$ and $\mathfrak{m} \ge 1$,
    \begin{equation}
        \sum_{n \ge \mathfrak{m}} e^{-c_1 n} n^k \le C k! \Big( \frac{1+\eta}{c_1} \Big)^k e^{-\frac{\eta c_1}{1+\eta} \mathfrak{m}}.
    \end{equation}
\end{lemma}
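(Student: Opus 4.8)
The plan is to dominate each summand by a geometric term via the elementary inequality
\[
n^k \;\le\; k!\,\lambda^{-k}\,e^{\lambda n}, \qquad n \ge 0,\ k \ge 0,\ \lambda > 0,
\]
which follows immediately from expanding the exponential: $e^{\lambda n} = \sum_{j\ge 0} (\lambda n)^j/j! \ge (\lambda n)^k/k!$, all terms being nonnegative. The only real choice is the value of the free parameter $\lambda$: it must be small enough that $c_1-\lambda>0$, so that what remains is a convergent geometric series, while producing exactly the prefactor $((1+\eta)/c_1)^k$ demanded by the statement. Taking $\lambda = c_1/(1+\eta)$ achieves both, since then $\lambda^{-k} = ((1+\eta)/c_1)^k$ and $c_1 - \lambda = \eta c_1/(1+\eta) > 0$.

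With this choice I would write, for all $k \ge 1$ and $\mathfrak{m} \ge 1$,
\[
\sum_{n \ge \mathfrak{m}} e^{-c_1 n} n^k \;\le\; k!\,\Big(\frac{1+\eta}{c_1}\Big)^k \sum_{n \ge \mathfrak{m}} e^{-\frac{\eta c_1}{1+\eta} n} \;=\; k!\,\Big(\frac{1+\eta}{c_1}\Big)^k \,\frac{e^{-\frac{\eta c_1}{1+\eta}\mathfrak{m}}}{1 - e^{-\frac{\eta c_1}{1+\eta}}},
\]
the last equality being the explicit summation of the geometric series (whose ratio lies in $(0,1)$). This is precisely the asserted bound, with $C = \big(1 - e^{-\eta c_1/(1+\eta)}\big)^{-1}$, a constant depending only on $\eta$ and $c_1$ as required.

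There is no genuine obstacle here, since the statement concerns only a real-valued series: I expect the whole argument to be one or two lines. The single point worth double-checking is that the rate $c_1-\lambda$ left over after the split coincides with the rate $\eta c_1/(1+\eta)$ appearing in the statement, which it does by the choice $\lambda = c_1/(1+\eta)$; one should also note that the bound $n^k \le k!\,\lambda^{-k} e^{\lambda n}$ is trivially valid at $n=0$ as well (both sides manifestly ordered when $k\ge 1$), so no separate treatment of the first term is needed.
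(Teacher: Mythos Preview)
Your proof is correct and is essentially identical to the paper's own proof: both use the inequality $n^k \le k!\,\lambda^{-k} e^{\lambda n}$ with $\lambda = c_1/(1+\eta)$ (the paper phrases it as $t^k \le k!\,e^t$ with $t = nc_1/(1+\eta)$), and then sum the resulting geometric series to obtain the same constant $C = (1 - e^{-\eta c_1/(1+\eta)})^{-1}$.
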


\begin{proof}
    We bound
    \[
    n^k = \Big( \frac{1+\eta}{c_1} \Big)^k \Big( \frac{nc_1}{1+\eta} \Big)^k \le \Big( \frac{1+\eta}{c_1} \Big)^k k! e^{nc_1/(1+\eta)}
    \]
    to get that
    \[
    \sum_{n \ge \mathfrak{m}} e^{-c_1 n} n^k
    \le \Big( \frac{1+\eta}{c_1} \Big)^k k! \sum_{n \ge \mathfrak{m}} e^{-\frac{\eta c_1}{1+\eta} n} = \Big( \sum_{n \ge 0} e^{-\frac{\eta c_1}{1+\eta} n}\Big) \Big( \frac{1+\eta}{c_1} \Big)^k k! e^{-\frac{\eta c_1}{1+\eta} \mathfrak{m}}
    \]
    which is the desired estimate.
\end{proof}

\begin{lemma}\label{L:very_elementary}
Let $C_1 \ge0$, $C_2 \in [0,1)$ and $(M_N)_{N \ge 0}$ be a sequence of numbers in $[0,\infty]$ satisfying
\[
M_N \le \max(M_{N-1}, C_1 + C_2 M_{N-1}), \qquad N \ge 1.
\]
Then $\sup_{N \ge 0} M_N \le \max(M_0, C_1/(1-C_2))$.
\end{lemma}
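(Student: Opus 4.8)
This is a routine induction on $N$, so I expect no real obstacle; the only thing to be slightly careful about is the possibility that some $M_N$ equals $+\infty$. Set $L := \max(M_0, C_1/(1-C_2))$, which is a well-defined element of $[0,\infty]$ (finite precisely when $M_0<\infty$, since $C_1\ge 0$ and $C_2\in[0,1)$ make $C_1/(1-C_2)$ finite). The claim to prove by induction is simply that $M_N \le L$ for every $N \ge 0$.

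\textbf{Key steps.} First, the base case $N=0$ is immediate from $M_0 \le L$. For the inductive step, assume $M_{N-1} \le L$. If $L = \infty$ there is nothing to prove, so assume $L<\infty$, whence $M_{N-1}<\infty$ and $C_1 + C_2 M_{N-1}$ is finite. Using the hypothesis,
\[
M_N \le \max\bigl(M_{N-1},\, C_1 + C_2 M_{N-1}\bigr) \le \max\bigl(L,\, C_1 + C_2 L\bigr).
\]
The second key step is the elementary observation that $C_1 + C_2 L \le L$: this is equivalent to $C_1 \le (1-C_2)L$, i.e. to $L \ge C_1/(1-C_2)$, which holds by the very definition of $L$ (here $1-C_2>0$ is used). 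Hence $\max(L, C_1+C_2L) = L$ and $M_N \le L$, completing the induction. Taking the supremum over $N$ gives $\sup_{N\ge 0} M_N \le L = \max(M_0, C_1/(1-C_2))$, as desired.

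\textbf{Remark on difficulty.} The proof is genuinely a two-line induction; the ``hard part'', such as it is, is merely bookkeeping the $\infty$ case so that the statement is correct even when $M_0=\infty$. One could alternatively unfold the recursion to write $M_N \le \max(M_0, C_1(1 + C_2 + \dots + C_2^{N-1}) + C_2^N M_0)$ and pass to the limit using $\sum_{j\ge 0} C_2^j = 1/(1-C_2)$, but the direct induction against the fixed bound $L$ is cleaner since $L$ is itself a fixed point of the map $x \mapsto \max(x, C_1+C_2 x)$ restricted appropriately.
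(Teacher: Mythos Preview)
Your proof is correct and follows essentially the same induction-against-a-fixed-bound approach as the paper's; the only cosmetic difference is that the paper splits into the two cases $M_0 \ge C_1/(1-C_2)$ and $M_0 \le C_1/(1-C_2)$ to show $M_1 \le L$, whereas you pass directly through the inequality $C_1 + C_2 L \le L$.
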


\begin{proof}
If $M_0 \ge C_1/(1-C_2)$, then $M_1 \le \max(M_0,C_1+C_2M_0) = M_0$.
If $M_0 \le C_1/(1-C_2)$, then $M_1 \le \max(M_0,C_1+C_2M_0) = C_1+C_2M_0 \le C_1 + C_2 C_1/(1-C_2) = C_1/(1-C_2)$. Overall, $M_1 \le \max(M_0, C_1/(1-C_2))$. The same bound can be obtained for $M_N$ for any $N$ by induction.
\end{proof}

\section{Proof of Theorem \ref{T:convergence_k}}\label{S:convergence}

This section is dedicated to the proof of Theorem \ref{T:convergence_k}. By symmetry, we can assume that $\gamma \ge 0$ and since the case $\gamma=0$ was treated in Section~\ref{SS:gamma=0}, we will assume that $\gamma>0$ throughout the section. We first introduce good events and state three propositions, Proposition \ref{P:bounded_L2}, \ref{P:bound_pmoment} and \ref{P:bad_L1} below. We then show how Theorem \ref{T:convergence_k} follows from them. 
In Sections \ref{SS:2nd}, \ref{SS:higher} and \ref{SS:1st}, we will prove Propositions \ref{P:bounded_L2}, \ref{P:bound_pmoment} and \ref{P:bad_L1} respectively.
To ease notations, we will actually only prove Theorem \ref{T:convergence_k} for $\eps \in \{e^{-n}\}_{n \ge 0}$ instead of considering any $\eps \in (0,1)$. In this whole section, we will thus always restrict to such ``exponential scales'' without necessarily writing it explicitly. Trivial adaptations deal with the general case. 

\begin{notation}[Lattice, cube]\label{N:lattice}
Let $n \ge 0$. Define $\Lambda_n := e^{-n} \Z^d$ and for $x \in \Lambda_n$, let $Q_n(x) := x + \frac12 e^{-n} [-1,1)^d$. For any $x \in \R^d$, we denote by $x_n$ the unique element of $\Lambda_n$ such that $x \in Q_n(x_n)$.    
Let $Q$ be the cube $[0,1]^d$.
\end{notation}

\noindent\textbf{Good events.}
Let $\hat\gamma > \gamma$ be a parameter.
Recalling that $X_n^\af$ stands for the $*$-scale regularisation \eqref{E:X_approx_star_scale}, we define the good event
\begin{align}
E_n(x) := \{ X_n^\af(x) \le \hat\gamma n \}, \qquad n \ge 1, \quad x \in \Lambda_n,\\
G_{\eps,\delta}(x) := \bigcap_{n=n_0}^N E_n(x_n), \qquad \delta = e^{-n_0} > \eps = e^{-N}, \quad x \in \R^d.
\end{align}
The parameter $\hat\gamma$ will always be the one we use in the definition of these events and so we omit it from our notations.
Using good events which are defined in terms of the $*$-scale regularisation of the field will lead to simplifications but is not crucial.
Let $\delta,\eps \in \{e^{-n}\}_{n \ge 0}$ with $\delta > \eps$, $f:\R^d\to\R$ be a bounded measurable function with compact support and $k\ge0$.
Recall from \eqref{E:defI}, that we denote by $I_\eps^\af(f,k)$ the integral of $\wick{X_\eps^k e^{\gamma X_\eps}}$ against $f$.
We can split
\begin{align}\label{E:split_good_bad}
\hspace{60pt} I_\eps^\af(f,k) & = I_{\eps,\delta}^{\af,\good}(f,k) + I_{\eps,\delta}^{\af,\bad}(f,k)
\hspace{50pt}\text{where}\\
I_{\eps,\delta}^{\af,\good}(f,k) & := \int_{\R^d} f(x) : X_\eps^\af(x)^k e^{\gamma X_\eps^\af(x)} : \mathbf{1}_{G_{\eps,\delta}(x)} \d x, \\
I_{\eps,\delta}^{\af,\bad}(f,k) & := \int_{\R^d} f(x) : X_\eps^\af(x)^k e^{\gamma X_\eps^\af(x)} : \mathbf{1}_{G_{\eps,\delta}(x)^c} \d x.
\end{align}

\begin{proposition}[Second moment, good event]\label{P:bounded_L2}
Assume that $\hat\gamma \in (\gamma,\sqrt{2d}\wedge(2\gamma))$.
Let $k \ge 0$, $\af \in [0,1]$, $f: [0,1]^d \to \R$ be a bounded function and $\delta \in (0,1)$. The sequence $(I_{\eps,\delta}^{\af,\good}(f,k))_{\eps \in (0,\delta)}$ is a Cauchy sequence in $L^2$. Moreover, for all $u > \sqrt2/(\hat\gamma-\gamma)$, there exists $C>0$ which may depend on $\gamma,\hat\gamma$ and $u$ but not on $k$, $\eps$, $\delta$, $\af$ or $f$ such that for all $\eps \in (0,\delta)$,
\begin{equation}\label{E:P_bdd_L2}
\E[ I_{\eps,\delta}^{\af,\good}(f,k)^2 ] \le C \|f\|_\infty^2 u^{2k} k!^2 \delta^{-d}.
\end{equation}
\end{proposition}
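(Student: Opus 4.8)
The plan is to compute the second moment $\E[I_{\eps,\delta}^{\af,\good}(f,k)I_{\eps',\delta}^{\af,\good}(f,k)]$ for two scales $\eps,\eps'$ by expanding it as a double integral over $(x,y)\in[0,1]^d\times[0,1]^d$ of $\E[\wick{X_\eps(x)^k e^{\gamma X_\eps(x)}}\wick{X_{\eps'}(y)^k e^{\gamma X_{\eps'}(y)}}\mathbf 1_{G_{\eps,\delta}(x)}\mathbf 1_{G_{\eps',\delta}(y)}]$. The first step is to reduce to a manageable form by conditioning on the coarse-field filtration $\Fc_{s,s}$ at the ``branching scale'' $s=|\log|x-y||$ (rounded to an exponential scale), i.e. on the common part of the two fields up to the scale where $x$ and $y$ decouple. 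Using Lemma~\ref{L:independence}, past scale $s$ the two increments $X_\eps(x)-\E[X_\eps(x)\mid\Fc_s]$ and $X_{\eps'}(y)-\E[X_{\eps'}(y)\mid\Fc_s]$ are independent, so after conditioning the joint Wick exponential factorises. Writing $X_\eps(x)=A+\tilde X$ with $A$ the coarse part ($\Fc_s$-measurable) and $\tilde X$ the independent fine part, I expand $\wick{X_\eps(x)^k e^{\gamma X_\eps(x)}}$ using the binomial-type identity \eqref{E:L_Hermite_umbral2} (as in Lemma~\ref{L:scaling_covariance_Hermite}) into $\sum_j\binom kj\wick{\tilde X^j e^{\gamma\tilde X}}\cdot\wick{A^{k-j}e^{\gamma A}}$, and similarly for $y$; then I take the conditional expectation of the fine parts.

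The second step is to bound the conditional expectation of the fine parts. Here the good event enters: on $G_{\eps,\delta}(x)$ the coarse value $A=X_s(x)$ is at most $\hat\gamma s$, and — crucially — the argument of the Hermite polynomial in $\wick{\tilde X^j e^{\gamma\tilde X}}$ is $(\tilde X-\gamma\tilde\sigma^2)/\tilde\sigma$ where $\tilde\sigma^2\approx|\log\eps|-s$. Taking $\E[\,\cdot\mid\Fc_s]$ of the fine Wick exponential gives a factor $e^{\gamma^2\tilde\sigma^2/2}$ times a factor involving $\E[\wick{\tilde X^j e^{\gamma\tilde X}}\mid\Fc_s]$, which by \eqref{E:Hermite1} is $\tilde\sigma_{\eps,s}^j$-type polynomial in $\gamma$; these contribute at most $C^j j!$-type bounds after using Lemma~\ref{L:Hermite_bound} with a well-chosen $\rho$. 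The main point is the bookkeeping of the exponential weights: the coarse factor $\wick{A^{k-j}e^{\gamma A}}$ contributes $e^{\gamma A-\gamma^2\sigma_s^2/2}$, which combined with the truncation $A\le\hat\gamma s$ and the Gaussian density of $A$ (variance $\approx s$) produces, after integrating over $A$, a decay $e^{-(\text{const})s}$ with constant governed by a quadratic optimisation; the condition $\hat\gamma<2\gamma$ is exactly what makes the relevant constant come out as $(\hat\gamma-\gamma)^2/2$ rather than something worse (this is the content of Remark~\ref{R:restriction}), while $\hat\gamma<\sqrt{2d}$ ensures the overall $s$-decay beats the volume growth $e^{ds}$ of pairs $(x,y)$ at distance $e^{-s}$. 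The Hermite polynomial $H_{k-j}$ of the coarse variable, evaluated near $\hat\gamma s/\sqrt s\sim\hat\gamma\sqrt s$, is controlled by Lemma~\ref{L:Hermite_bound}, contributing roughly $\sqrt{(k-j)!}\,\rho^{-(k-j)/2}e^{c\hat\gamma^2 s}$; one must check this extra $e^{c\hat\gamma^2 s}$ is absorbed by choosing $\rho$ appropriately and by the slack in $\hat\gamma<\sqrt{2d}$.

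The third step is to assemble: summing over $j$ from $0$ to $k$ using $\sum_j\binom kj$ (or rather $\sum_j\binom kj\sqrt{j!}\sqrt{(k-j)!}\le Ck!$-type estimates, Cauchy--Schwarz on binomial coefficients), summing the geometric-in-$s$ series via Lemma~\ref{L:elementary_sum} to get the $k!(1+\eta)^k/c_1^k$-shaped bound, and finally integrating over $(x,y)$: the integral over $y$ at fixed distance scale contributes the volume $e^{-ds}\,\mathrm{vol}$, and summing over branching scales $s$ down to $|\log\delta|$ produces the $\delta^{-d}$ factor (the integral genuinely diverges like $\delta^{-d}$ because we only truncate below scale $\delta$). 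One then extracts the clean bound $C\|f\|_\infty^2 u^{2k}k!^2\delta^{-d}$ for any $u>\sqrt2/(\hat\gamma-\gamma)$, the constant $u$ absorbing the $(1+\eta)$ slack from Lemma~\ref{L:elementary_sum}. For the Cauchy claim, the same computation applied to the difference $I_{\eps,\delta}^{\af,\good}(f,k)-I_{\eps',\delta}^{\af,\good}(f,k)$: when $\eps,\eps'\to0$ the conditional covariances $C_{\eps,\eps'}(x,y)$ converge pointwise to $C(x,y)$ by \eqref{E:L_covariance7}, and a dominated-convergence argument (using the uniform bound just proved as the dominating function, which is integrable since for fixed $\delta$ everything is finite) shows the cross term converges to the common limit, so the $L^2$ distance $\to0$.

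The main obstacle I anticipate is the second step: tracking the exponential weights through the Hermite expansion precisely enough to land on the sharp constant $\sqrt2/(\hat\gamma-\gamma)$ in the $u^{2k}$ factor, rather than a suboptimal one. The delicate interplay is between (i) the factor $e^{\gamma A}$ from the coarse Wick exponential, (ii) the Gaussian tilt $e^{-A^2/(2s)}$, (iii) the truncation $A\le\hat\gamma s$, and (iv) the growth of $|H_{k-j}(A/\sqrt s)|$ — getting the quadratic optimisation in $A$ to yield exactly $(\hat\gamma-\gamma)^2/2$ for the $s$-decay rate requires that $\hat\gamma<2\gamma$ so the unconstrained maximum of $\gamma A-A^2/(2s)$ (attained at $A=\gamma s<\hat\gamma s/2\cdot 2$... ) sits inside the truncation region in the relevant way; handling the boundary case and the Hermite correction simultaneously, uniformly in $j,k$ and in $\af\in[0,1]$ (using the $\af$-uniform covariance estimates of Lemma~\ref{L:covariance}), is where the real work lies.
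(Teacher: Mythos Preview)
Your overall architecture (reduce to a two-point kernel, exploit independence past the branching scale, control Hermite polynomials via Lemma~\ref{L:Hermite_bound}, assemble via a geometric sum) is the right shape, but the second step has a genuine gap. The good event $G_{\eps,\delta}(x)=\bigcap_{n=n_0}^N E_n(x_n)$ involves \emph{all} scales down to $N=|\log\eps|$, not just the branching scale $s\approx|\log|x-y||$. When you condition on $\Fc_s$, the fine-scale part $\bigcap_{n>s}E_n(x_n)$ of the good event is \emph{not} $\Fc_s$-measurable: it is entangled with your fine increment $\tilde X$. So you do not obtain a clean $\E[\wick{\tilde X^j e^{\gamma\tilde X}}\mid\Fc_s]$ but rather $\E[\wick{\tilde X^j e^{\gamma\tilde X}}\mathbf 1_{\bigcap_{n>s}E_n}\mid\Fc_s]$, which is exactly the one-point version of the problem you are trying to solve and is not obviously bounded uniformly in $\eps$. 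Nor can you simply discard the fine-scale indicator: for $k\ge1$ the Wick product is signed, so $\mathbf 1_{G_{\eps,\delta}}\le\mathbf 1_{G_{e^{-s},\delta}}$ does not yield an inequality of expectations. (Incidentally, without the indicator $\E[\wick{\tilde X^j e^{\gamma\tilde X}}]=0$ for $j\ge1$, not a ``$\tilde\sigma^j$-type polynomial in $\gamma$'' --- so \eqref{E:Hermite1} is not doing what you say there.)

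The paper resolves this via the ``martingale trick'' illustrated in the toy model \eqref{E:MG_trick}--\eqref{E:MG_trick1}. After applying Cameron--Martin globally, one performs an inclusion-exclusion on the good event (see \eqref{E:inclusion_exclusion}): $\mathbf 1_{G_{N,N}(x,y)}$ is written as $\mathbf 1_{G_{\nf+1,\nf+1}}$ minus a telescoping sum indexed by the \emph{first} scales $n_x,n_y>\nf$ at which $E^{(1)}$, $E^{(2)}$ fail. Each summand is $\Fc_{n_x+1,n_y+1}$-measurable by construction, and \emph{only then} can one use independence past that scale together with the martingale identity \eqref{E:Hermite1} to bring the two Hermite polynomials down to scales $n_x,n_y$ separately (equation \eqref{E:bigbig}). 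The event $E^{(1)}_{n_x+1}{}^c$ forces a large increment of the field, supplying an extra decay $e^{-\frac12(\hat\gamma-\gamma)^2(n_x-\nf)}$; summing over $(n_x,n_y)$ is what produces the geometric series and, after the bookkeeping with Lemma~\ref{L:Hermite_bound}, the constant $\sqrt2/(\hat\gamma-\gamma)$. Your plan conditions once at a single scale and never iterates; without this inclusion-exclusion step the uniform-in-$\eps$ control is not available.
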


\begin{remark}[Restriction $\hat\gamma<2\gamma$]\label{R:restriction}
We now comment on the assumption that $\hat\gamma < 2\gamma$ made in the above statement and then discuss its consequences.
For the sake of this discussion, we can focus on the classical case $k=0$.
When computing the second moment of $I_{\eps,\delta}^{\af,\good}(f,0)$, we will encounter a term of the form
\[
\E[\wick{e^{\gamma X_\eps(x)}}\wick{e^{\gamma X_\eps(y)}} \mathbf{1}_{G_{\eps,\delta}(x)\cap G_{\eps,\delta}(y)}],
\]
where $x,y \in [0,1]^d$. If these points are such that $\eps<|x-y|<\delta$, the above intersection of good events contains the event that
$X_n(x) \le \hat\gamma n$
where $n$ is such that $e^{-n}$ is of order $|x-y|$. Hence, by applying Cameron--Martin formula, we get that the above display is at most
\begin{align*}
    e^{\gamma^2 \E[X_\eps(x)X_\eps(y)]} \P(X_n(x) + \gamma\E[X_n(x)(X_\eps(x)+X_\eps(y))] \le \hat\gamma n).
\end{align*}
The above choice of scales guarantees that $\E[X_n(x)(X_\eps(x)+X_\eps(y))] = 2n +O(1)$ and so we are dealing with a probability of the form
\[
\P(X_n(x) \le (\hat\gamma-2\gamma)n + O(1)).
\]
If $\hat\gamma < 2\gamma$, this probability is exponentially small in $n$. However, if $\hat\gamma \ge 2\gamma$ this probability is not particularly small which eventually leads to poorer bounds, especially in terms of the growth as $k\to \infty$.
\end{remark}

The restriction $\hat\gamma<2\gamma$ is problematic because we want to take $\hat\gamma$ arbitrarily close to $\sqrt{2d}$ to make 
$\sqrt{2}/(\hat{\gamma}-\gamma)$ arbitrarily close to $\sqrt{2}/(\sqrt{2d}-\gamma)$ to achieve the desired estimate \eqref{E:T_convergence_k}. When $\gamma<\sqrt{d/2}$, $2\gamma<\sqrt{2d}$ and the maximum value of $\hat\gamma$ is not the desired one.
We resolve this problem by considering higher moments which will effectively replace the restriction $\hat\gamma<2\gamma$ by $\hat\gamma<p\gamma$ where $p$ is the order of the moment:

\begin{proposition}[Higher moments, good event]\label{P:bound_pmoment}
    Let $p\ge 3$ be an integer. Assume that $\gamma < \sqrt{2d/p}$ and $\hat\gamma\in(\gamma, \sqrt{2d}\wedge(p\gamma))$. For all $u > \sqrt{2}/(\hat\gamma-\gamma)$, there exists $C>0$ which may depend on $\gamma$, $\hat\gamma$, $u$ and $p$ such that for all $\af\in[0,1]$, $k\ge 0$, $f:[0,1]^d\to\R$ bounded measurable function, $\delta\in(0,1)$, $\eps\in(0,\delta)$,
    \begin{equation}\label{E:P_bdd_Lp}
|\E[ I_{\eps,\delta}^{\af,\good}(f,k)^{p} ]| \le C \|f\|_\infty^{p} u^{pk} k!^{p} \delta^{-dp/2}.
\end{equation}
\end{proposition}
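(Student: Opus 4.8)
\textbf{Proof strategy for Proposition~\ref{P:bound_pmoment}.}
The plan is to follow the same route as the proof of the second moment bound in Proposition~\ref{P:bounded_L2}, replacing the pair of points $(x,y)$ by a $p$-tuple $(x_1,\dots,x_p) \in ([0,1]^d)^p$ and expanding the product of the $p$ Wick quantities. Writing $I_{\eps,\delta}^{\af,\good}(f,k)^p$ as a $p$-fold integral, we get
\begin{equation*}
\E[ I_{\eps,\delta}^{\af,\good}(f,k)^{p} ] = \int_{([0,1]^d)^p} \Big(\prod_{i=1}^p f(x_i)\Big)\, \E\Big[ \prod_{i=1}^p :\!X_\eps(x_i)^k e^{\gamma X_\eps(x_i)}\!: \prod_{i=1}^p \mathbf{1}_{G_{\eps,\delta}(x_i)} \Big]\, \d x_1 \cdots \d x_p.
\end{equation*}
First I would reduce the inner expectation via a Girsanov/Cameron--Martin tilt: the product of the exponential factors $\prod_i e^{\gamma X_\eps(x_i) - \frac{\gamma^2}{2}\sigma_\eps^2}$ shifts the law of the field by $\gamma \sum_i C_\eps(\cdot, x_i)$, producing the overall Gaussian factor $\exp(\gamma^2 \sum_{i<j} C_\eps(x_i,x_j))$ together with a tilted expectation of $\prod_i :\!X_\eps(x_i)^k\!:$ and the good events, now evaluated under the shifted measure. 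The Hermite part $\prod_i :\!X_\eps(x_i)^k\!:$ should be handled exactly as in the proof of Proposition~\ref{P:bounded_L2}: using \eqref{E:myWick} and the bound of Lemma~\ref{L:Hermite_bound} one controls $|:\!X_\eps(x_i)^k\!:|$ by $\rho^{-k/2}\sqrt{k!}\,(\sigma_\eps)^k$ times a sub-Gaussian factor $e^{c X_\eps(x_i)^2}$, at the cost of a tunable parameter $\rho \in (0,1)$; choosing $\rho$ close to $1$ yields the $\sqrt{k!}$ rate per point and hence $k!^{p/2}$ overall. This is where the exponent $u > \sqrt{2}/(\hat\gamma-\gamma)$ and the power $u^{pk}$ enter: after integrating, each point contributes a geometric-type sum in the ``first bad scale'' handled by Lemma~\ref{L:elementary_sum}, which trades $n^k$ for $k!\,((1+\eta)/c_1)^k$.

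The combinatorial/geometric core is to organise the $p$ points into clusters according to their mutual distances (a multi-scale decomposition: at scale $e^{-n}$, group points within distance $\sim e^{-n}$) and, for each point, to look at the \emph{first} scale $n$ at which its good event $E_n(x_{i,n})$ is about to fail, as in Proposition~\ref{P:bounded_L2} and in the standard real GMC argument of \cite{zbMATH06725033}. On the good event, $X_n(x_i) \le \hat\gamma n$; combined with the Girsanov shift, which adds $\gamma \E[X_n(x_i)\sum_j X_\eps(x_j)] = \gamma n \cdot \#\{j : |x_i - x_j| \lesssim e^{-n}\} + O(1)$ to the mean, one is left with estimating probabilities of the form $\P(X_n \le (\hat\gamma - \gamma \cdot m)n + O(1))$ where $m \ge 1$ is the number of points clustered with $x_i$ at scale $n$. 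The assumption $\hat\gamma < p\gamma$ ensures that in the worst case, when all $p$ points are tightly clustered so $m$ can be as large as $p$, the exponent $\hat\gamma - m\gamma$ is negative (for $m \ge 2$ it suffices that $\hat\gamma < 2\gamma$, but since we cannot assume that, we need the full strength $m$ up to $p$), making each such probability exponentially small in $n$ and summable; this is exactly the role played by $\hat\gamma < 2\gamma$ in Remark~\ref{R:restriction}, now relaxed. The Gaussian prefactor $\exp(\gamma^2 \sum_{i<j} C_\eps(x_i,x_j))$ behaves like $\prod_{i<j} |x_i - x_j|^{-\gamma^2}$ near the diagonal, and the condition $\gamma < \sqrt{2d/p}$, i.e. $p\gamma^2 < 2d$, is what guarantees that the full-diagonal singularity (all $p$ points coinciding, contributing $|x_i-x_j|^{-\gamma^2}$ over $\binom{p}{2}$ pairs against a $d(p-1)$-dimensional integration after fixing $x_1$) is integrable — more precisely it ensures that the net scaling exponent at each level of the cluster hierarchy, after accounting for the probability gains from the good events, stays on the convergent side.

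The final bookkeeping is to sum over all cluster configurations and all choices of first-failure scales. I would set this up as an integral over a hierarchy of scales $n_1 \le n_2 \le \dots$ (one per successive merging of clusters), with each level contributing: a volume factor $e^{-d(\cdot) n}$ from the spatial integration, a factor $e^{\gamma^2(\cdot)n}$ from the covariance prefactor, a probability gain $e^{-c(\cdot)n}$ from the good events of the points that ``see'' that scale, and, at the finest scale $\delta = e^{-n_0}$, the $\delta^{-dp/2}$ volume term that appears in the statement (coming from the points that never trigger a bad scale and are integrated freely over cubes of size $\delta$, as in Proposition~\ref{P:bounded_L2}). Applying Lemma~\ref{L:elementary_sum} to each scale-sum converts the accumulated $n^k$-type weights into $k!\,u^k$-type weights per point, giving the claimed $u^{pk} k!^p$ after noting $k!^{p/2} \cdot (k!)^{p/2} = k!^p$ once the Hermite $\sqrt{k!}$ per point is squared into the estimate; one checks that the constants depend only on $\gamma, \hat\gamma, u, p$. \textbf{The main obstacle} I anticipate is the combinatorics of the $p$-point cluster expansion together with verifying that, at \emph{every} level of the hierarchy and for \emph{every} cluster shape, the three competing exponents (volume gain $d$, covariance loss $\gamma^2$, probability gain from the good events) combine to a strictly convergent exponent uniformly — this is where both hypotheses $\gamma < \sqrt{2d/p}$ and $\hat\gamma < p\gamma$ must be used in a coordinated way, and where one must be careful that a cluster of $m$ points contributes a probability gain scaling like $(m\gamma-\hat\gamma)$ (not merely $(2\gamma-\hat\gamma)$), which is the precise point where higher moments buy us room over the second-moment computation.
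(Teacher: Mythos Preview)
Your proposal is correct and follows essentially the same route as the paper's (sketched) proof: expand as a $p$-fold integral, apply Cameron--Martin, then combine the martingale trick (inclusion-exclusion on first-failure scales, as in Proposition~\ref{P:bounded_L2}) with Lemma~\ref{L:Hermite_bound} and sum via Lemma~\ref{L:elementary_sum}. The paper makes this concrete for $p=3$ by ordering the points by mutual distance and invoking the good-event gain $(p\gamma-\hat\gamma)_+$ only at the single coarsest scale $\nf_z$ where all $p$ points are clustered, with the finer-scale gains coming from the failure events $e^{-\frac12(\hat\gamma-\gamma)^2(n_w-\nf_w)}$ of the martingale trick rather than from intermediate-$m$ good events --- this is the cleanest way past the case $m<p$ with $\hat\gamma>m\gamma$ that you correctly flag as the main obstacle.
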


Finally, the next proposition states that the contribution of the bad event is small in $L^1$:

\begin{proposition}[First moment, bad event]\label{P:bad_L1}
    Assume that $\hat\gamma \in (\gamma,\sqrt{2d})$. For any $u > \sqrt2/(\hat\gamma-\gamma)$,
    \begin{equation}\label{E:P_bdd_L1}
        \lim_{\delta \to 0} \sup_{\eps \in (0,\delta)} \sup_{k \ge 0} \frac{1}{u^k k!} \sup_{\af \in [0,1]} \sup_{0<\|f\|_\infty<\infty} \frac1{\|f\|_\infty} \E[ |I_{\eps,\delta}^{\af,\bad}(f,k)| ] = 0,
    \end{equation}
    where the last supremum ranges over all bounded measurable functions $f:[0,1]^d \to \R$.
\end{proposition}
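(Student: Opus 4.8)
\textbf{Proof proposal for Proposition \ref{P:bad_L1}.}

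The plan is to prove the result for almost $*$-scale invariant fields by exploiting the scaling covariance (Lemma \ref{L:scaling_covariance_Hermite}) and closing an inductive inequality of the type \eqref{E:ineq_aa}. First I would fix $\hat\gamma \in (\gamma,\sqrt{2d})$ and $u > \sqrt2/(\hat\gamma-\gamma)$, and for $N \ge 0$ set
\[
M_N := \max_{0 \le n \le N} \max_{k \ge 0} \frac{1}{u^k k!} \sup_{\af \in [0,1]} \sup_{0 < \|f\|_\infty < \infty} \frac{1}{\|f\|_\infty} \E\big[ |I_{e^{-n},1}^{\af,\bad}(f,k)| \big],
\]
so that the quantity in \eqref{E:P_bdd_L1} is, after rescaling the ratio $\eps/\delta$, controlled by the behaviour of $M_N$ as $N \to \infty$ together with a tail term that vanishes as $\delta \to 0$. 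The key structural fact is that, on the complement $G_{\eps,\delta}(x)^c$, one may decompose according to the \emph{first} scale $m \in \{n_0,\dots,N\}$ at which the thickness constraint $X_m^\af(x_m) \le \hat\gamma m$ is violated. On $\{X_m^\af(x_m) > \hat\gamma m\}$ and $\bigcap_{n_0 \le n < m} E_n(x_n)$, the first $m$ scales of the field contribute a Gaussian density factor, and the remaining scales are governed by a rescaled copy of the same problem at scale $\eps/e^{-m}$, via Lemma \ref{L:scaling_covariance_Hermite}.

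The main steps, in order: (i) Use Lemma \ref{L:scaling_covariance_Hermite} to write, for $x$ in a cube $Q_m(z)$ with $z \in \Lambda_m$, the Wick quantity $:\!X_\eps^\af(x)^k e^{\gamma X_\eps^\af(x)}\!:$ as a binomial sum over $j$ of a product of an independent small-scale Wick term $:\!(X_{\eps/r_0}^{\af r_0^\alpha})^j e^{\gamma X_{\eps/r_0}^{\af r_0^\alpha}}\!:$ and a coarse-scale term $:\!X_{m,\eps}^\af(r_0\cdot)^{k-j} e^{\gamma X_{m,\eps}^\af(r_0\cdot)}\!:$ depending only on $X_m^\af$; here $r_0 = e^{-m}$. (ii) Condition on $\Fc_m$ (the $\sigma$-algebra generated by the coarse field): the inner $L^1$ norm of the small-scale integral over $Q_m(z)$ is bounded, using the definition of $M_{N-m}$, by $C u^j j! r_0^d$ times a local sup of $\|$coarse factor$\|_\infty$ on $Q_m(z)$. (iii) Sum the binomial expansion: $\sum_j \binom{k}{j} u^j j! \le C u^k k!$ provided $u$ is bounded away from $0$ (using $\sum_j \binom{k}{j} j!\, u^j / (u^k k!) = \sum_j u^{j-k}/(k-j)! \le e^{1/u}$), which is exactly where the factor $u^k k!$ is reproduced. (iv) Take expectation over $\Fc_m$ of the coarse factor on the event $\{X_m^\af(z) > \hat\gamma m\} \cap \bigcap_{n<m} E_n(z_n)$; using Cameron--Martin to tilt by the exponential, this produces $\P(X_m^\af(z) \ge (\hat\gamma - \gamma) m + O(1))$ times a polynomial-in-$m$ Hermite factor (from the $H_{k-j}$ coming from the coarse Wick power), which is of size at most $e^{-(\hat\gamma-\gamma)^2 m/2 + o(m)} m^{k}$. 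Summing $z \in \Lambda_m \cap \mathrm{supp}(f)$ replaces $r_0^d$ by $O(1)$, and summing over the first bad scale $m$ from $n_0$ to $N$ gives, via Lemma \ref{L:elementary_sum} with $c_1$ slightly below $(\hat\gamma-\gamma)^2/2$, a bound of the form $C_1 e^{-c n_0} u^k k! + C_2\, u^k k!\, \max_{0 \le \ell \le N-n_0} (\cdots)$ feeding back into $M$. The constant $C_2$ can be made $<1$ precisely because $u > \sqrt2/(\hat\gamma-\gamma)$ makes the geometric series $\sum_m e^{-(\hat\gamma-\gamma)^2 m/2} m^k / (u^k k!)$ convergent with small tail; Lemma \ref{L:very_elementary} then gives $\sup_N M_N < \infty$, and the $e^{-c n_0}$ prefactor, with $n_0 = |\log\delta|$, gives the limit $0$ as $\delta \to 0$.

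The hard part will be step (iv): controlling the conditional expectation of the coarse Wick factor $:\!X_{m,\eps}^\af(r_0 x)^{k-j} e^{\gamma X_{m,\eps}^\af(r_0 x)}\!:$ restricted to the event that the $*$-scale field $X_m^\af$ is atypically thick at scale $m$, \emph{uniformly in $k$}. One must express this Wick power through Hermite polynomials $H_{k-j}$ evaluated at an argument of size $O(m)$, apply Lemma \ref{L:Hermite_bound} with a $\rho$ tuned to $m$ and $k$ to extract a $\sqrt{(k-j)!}$ and a harmless $e^{c m^2}$ — wait, that is too lossy; rather one keeps the Gaussian tail exact via Cameron--Martin and bounds the residual polynomial moment $\E[|H_{k-j}(\text{Gaussian of variance }O(m))|] \le C m^{(k-j)/2}\sqrt{(k-j)!}\,e^{o(m)}$, then checks that $\sum_j \binom{k}{j} u^j j! \cdot m^{(k-j)/2}\sqrt{(k-j)!}$ is still $\le C u^k k!\, m^{k/2}\cdot(\text{something summable against } e^{-(\hat\gamma-\gamma)^2 m/2})$. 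Making all of this uniform in $\af \in [0,1]$ is handled by Lemma \ref{L:covariance}, which provides $\af$-uniform covariance estimates; the dependence on the cube decomposition and the passage from a general test function $f$ supported in $[0,1]^d$ to the lattice sum is routine but must be done carefully to avoid losing the sharp constant $u$.
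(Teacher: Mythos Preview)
Your overall architecture---decompose the bad event by the first failing scale, rescale via Lemma~\ref{L:scaling_covariance_Hermite}, and close an inductive inequality of the type \eqref{E:ineq_aa}---matches the paper's. But there is a structural gap in how you set up the induction.

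You define $M_N$ via the \emph{bad-part} integral $I^{\af,\bad}_{e^{-n},1}$. However, in your step (ii), after rescaling a cube $Q_m(z)$ via Lemma~\ref{L:scaling_covariance_Hermite}, the small-scale contribution is a \emph{full} integral $I^{\af_m}_{e^m\eps}(\tilde f,j)$ with no bad-event indicator attached: the indicator $\mathbf{1}_{F_{m,\delta}(z)}$ you peeled off is measurable with respect to the coarse field $X^\af_m$ and imposes nothing on the independent rescaled fine field. You therefore cannot bound this full integral by your $M_{N-m}$, which only controls bad parts. The paper resolves this by defining $M_N$ via the full first moment (see \eqref{E:defMN}) and then, at each step of the induction, splitting $I_\eps = I^{\good}_{\eps,\delta}+I^{\bad}_{\eps,\delta}$: the good part is bounded by Propositions~\ref{P:bounded_L2}--\ref{P:bound_pmoment} (this is precisely where your unidentified constant ``$C_1$'' comes from), and the bad part feeds back into $M_{N-1}$ via Lemma~\ref{L:inductive}. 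You never invoke these good-event moment bounds, so your recursion has no free-standing constant and cannot close via Lemma~\ref{L:very_elementary}. Once $\sup_N M_N<\infty$ is established, the paper reads off \eqref{E:P_bdd_L1} from the separate estimate \eqref{E:pf_goal2}, namely that the bad part alone is at most $C_2(\mathfrak m)\,M_{N-1}$ with $C_2(\mathfrak m)\to0$.

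A secondary point on step (iv): the net Gaussian tail after accounting for the Hermite factor is $e^{-(\hat\gamma-\gamma)^2 m/4}$, not $e^{-(\hat\gamma-\gamma)^2 m/2}$. The bound from Lemma~\ref{L:Hermite_bound} on $|H_{k-j}|$ costs a factor $e^{\frac{1+o(1)}{4m}N_m^2}$ (this is \eqref{E:L_inductive2}), which against the Gaussian density $e^{-N_m^2/(2m)}$ leaves only $e^{-N_m^2/(4m)}$. This halving is exactly what produces the threshold $u=\sqrt2/(\hat\gamma-\gamma)$ rather than the smaller $1/(\hat\gamma-\gamma)$ your exponent would give: with the correct decay, Lemma~\ref{L:elementary_sum} and Stirling convert $\sum_{m\ge\mathfrak m} m^{(k-j)/2}\sqrt{(k-j)!}\,e^{-(\hat\gamma-\gamma)^2 m/4}$ into $(k-j)!\,(\sqrt2/(\hat\gamma-\gamma))^{k-j}$ up to $(1+\eta)^{k-j}$, and only then does the binomial sum in your step (iii) reproduce $u^k k!$ with a small prefactor $e^{-c\mathfrak m}$.
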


\begin{proof}[Proof of Theorem \ref{T:convergence_k}, assuming Propositions \ref{P:bounded_L2}, \ref{P:bound_pmoment} and \ref{P:bad_L1}]
    Let $f: \R^d \to \R$ be a bounded measurable function with compact support. Without loss of generality, we may assume that $f$ is supported in $[0,1]^d$.
    Combining Propositions \ref{P:bounded_L2} and \ref{P:bad_L1}, we immediately obtain that $(I_\eps^\af(f,k))_{\eps \in (0,1)}$ is a Cauchy sequence in $L^1$ and thus converges. It remains to prove the uniform estimate \eqref{E:T_convergence_k}. Let $\eta >0$. Let $\hat\gamma \in (\gamma,\sqrt{2d})$ close enough to $\sqrt{2d}$ so that $(1+\eta)\frac{\sqrt2}{\sqrt{2d}-\gamma} > \frac{\sqrt2}{\hat\gamma-\gamma}$. Let $u$ be such that $(1+\eta)\frac{\sqrt2}{\sqrt{2d}-\gamma} >u> \frac{\sqrt2}{\hat\gamma-\gamma}$. Let $\delta>0$. From the splitting \eqref{E:split_good_bad} in terms of good and bad events and then by Hölder's inequality, we can bound for all $\eps \in (0,\delta)$,
    \begin{align*}
        \E[|I_\eps^\af(f,k)|] \le \E[|I_{\eps,\delta}^{\af,\good}(f,k)|] + \E[|I_{\eps,\delta}^{\af,\bad}(f,k)|]
        \le \E[|I_{\eps,\delta}^{\af,\good}(f,k)|^{2p}]^{1/(2p)} + \E[|I_{\eps,\delta}^{\af,\bad}(f,k)|],
    \end{align*}
    where $p=p(\gamma) \ge 1$ is the smallest integer such that $2p\gamma \ge \sqrt{2d}$. If $\gamma\ge\sqrt{d}/2$, then $p=1$ and we apply Proposition \ref{P:bounded_L2} to bound the first right hand side term. Otherwise, $p\ge 2$ and because $2(p-1)\gamma < \sqrt{2d}$, we must have $\gamma < \sqrt{2d}/(2(p-1)) \le \sqrt{2d}/\sqrt{2p}$ and we can then apply Proposition \ref{P:bound_pmoment}. The second right hand side term is handled by Proposition \ref{P:bad_L1}. Overall, the right hand side is bounded by $C \|f\|_\infty k! u^k$ where $C>0$ is independent of $k$, $f$ and $\eps$. Because $u$ is smaller than $(1+\eta)\frac{\sqrt2}{\sqrt{2d}-\gamma}$, this proves the desired estimate \eqref{E:T_convergence_k}.
\end{proof}

\subsection{Second moment, good event}\label{SS:2nd}

Let $\eps,\eps_1,\eps_2,\delta \in \{e^{-n}\}_{n \ge 0}$ with $\eps,\eps_1,\eps_2 \le \delta$.
In this proof, we always write $\eps =e^{-N}$, $\eps_1 =e^{-N_1}$, $\eps_2 =e^{-N_2}$ and $\delta=e^{-n_0}$.
Exchanging expectation and integral and then using Cameron--Martin theorem, we get that
\begin{equation}\label{E:linkAI}
\E[ I_{\eps_1,\delta}^{\af,\good}(f,k)I_{\eps_2,\delta}^{\af,\good}(f,k) ]
= \int_{[0,1]^d \times [0,1]^d} f(x)f(y) e^{\gamma^2 C_{\eps_1,\eps_2}(x,y)} A(k,x,y,\eps_1,\eps_2) \d x \d y
\end{equation}
where
\begin{align}
A(k,x,y,\eps_1,\eps_2) := \sigma_{\eps_1}^k \sigma_{\eps_2}^k \E\Big[ H_k\Big( \frac{X_{\eps_1}(x)+\gamma C_{\eps_1,\eps_2}(x,y)}{\sigma_{\eps_1}} \Big) H_k\Big( \frac{X_{\eps_2}(y)+\gamma C_{\eps_1,\eps_2}(x,y)}{\sigma_{\eps_2}} \Big) \mathbf{1}_{G_{N_1,N_2}(x,y)} \Big]\label{E:def_A}
\end{align}
and where $G_{N_1,N_2}(x,y)$ is the event $G_{\eps_1,\delta}(x)\cap G_{\eps_2,\delta}(y)$ twisted by the Cameron--Martin shift. Specifically,
\begin{equation}
    \label{E:goodgood}
G_{N_1,N_2}(x,y) = \bigcap_{n=n_0}^{N_1} E_n^{(1)}(x,y) \cap \bigcap_{n=n_0}^{N_2} E_n^{(2)}(x,y),
\end{equation}
where for all $n \in \{n_0,\dots,N\}$,
\begin{align}\nonumber
E_n^{(1)}(x,y) & = 
 \{ X_n(x_n) \le \hat\gamma n - \gamma C_{n,\eps_1}(x_n,x) - \gamma C_{n,\eps_2}(x_n,y) \} \\
E_n^{(2)}(x,y) & = \{ X_n(y_n) \le \hat\gamma n - \gamma C_{n,\eps_2}(y_n,y) - \gamma C_{n,\eps_1}(y_n,x) \}.
\end{align}
The crux of the proof of Proposition \ref{P:bounded_L2} resides in bounding $A(k,x,y,\eps_1,\eps_2)$ and showing that it converges pointwise as $\eps_1,\eps_2\to 0$. We encapsulate this in the following lemma.

\begin{lemma}\label{L:boundA}
For any $u > \sqrt2/(\hat{\gamma}-\gamma)$, there exists $C >0$ (which may depend on $u,\gamma,\hat\gamma$) such that for all $k \ge 0$, $\eps_1,\eps_2 \in (0,1)$, $\delta \in (\eps,1)$ and $x,y \in [0,1]^d$, letting $r = \max(\eps_1,\eps_2,\min(\delta,|x-y|))$,
\begin{equation}
    \label{E:boundA}
|A(k,x,y,\eps_1,\eps_2)| \le C u^{2k} k!^2 r^{\frac12(2\gamma-\hat\gamma)_+^2 -(\hat\gamma-\gamma)^2}.
\end{equation}
In the above display, $(2\gamma-\hat\gamma)_+ = \max(2\gamma-\hat\gamma,0)$.
Moreover, for all disjoint $x,y \in [0,1]^d$ and $k \ge0$, $A(k,x,y,\eps_1,\eps_2)$ converges as $\eps_1,\eps_2\to 0$.
\end{lemma}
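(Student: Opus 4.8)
The quantity $A(k,x,y,\eps_1,\eps_2)$ is an expectation of a product of two Hermite polynomials against an indicator, so the plan is to peel off the truncation event scale by scale. Concretely, I would condition on the first scale $n_1$ (respectively $n_2$) at which the field $X_{n_1}(x_{n_1})$ is within a fixed distance of its barrier $\hat\gamma n_1 - (\text{shifts})$; below that scale the field is much smaller than the barrier and the Hermite polynomials can be bounded using Lemma~\ref{L:Hermite_bound} with a well-chosen $\rho$, while above that scale the increments of the field are independent of $\Fc_{n_1,n_2}$ by Lemma~\ref{L:independence}, so the two Hermite factors decouple and one can apply the exact formula \eqref{E:Hermite2} of Lemma~\ref{L:Hermite_martingale}. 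The martingale identity \eqref{E:Hermite1} is what lets one ``integrate out'' the scales between the decoupling scale and $\eps_i$, converting $H_k$ of a shifted Gaussian at scale $\eps_i$ into $(1-\sigma^2)^{k/2}H_k$ of the mean at the decoupling scale, at the cost of prefactors that are powers of $k!$ and of $u$.

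**Key steps, in order.** First I would reduce to the regime $\eps_1=\eps_2=\eps$ and $r=\min(\delta,|x-y|)$ up to harmless $O(1)$ covariance errors using Lemma~\ref{L:covariance}, since the general case only differs by bounded multiplicative constants in the exponentials. Second, decompose the good event $G_{N,N}(x,y)$ according to the first ``dangerous'' scale: write $\mathbf 1_{G}$ as a telescoping sum over the scale $n$ where $X_n$ first exceeds, say, $\hat\gamma n - \gamma(\text{shift}) - K$ for a large constant $K$ (or never does). On the piece where no scale is dangerous, bound each $H_k$ pointwise via \eqref{E:L_Hermite_upper_bound} choosing $\rho$ depending on $k$, and integrate the resulting Gaussian tail; this yields a bound of the form $C u^{2k} k!^2 r^{-(\hat\gamma-\gamma)^2}$ (the exponent $-(\hat\gamma-\gamma)^2$ coming from $\gamma^2 \cdot 2 - \hat\gamma^2$ being the cost of forcing the field to stay $\hat\gamma$-thin at scale $r$ against a $2\gamma$-tilt, matched with the Gaussian density). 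On each dangerous-scale piece at scale $n$, use Lemma~\ref{L:independence} to split $X_\eps(x) = \E[X_\eps(x)\,|\,\Fc_n] + (\text{increment})$, apply \eqref{E:L_Hermite_umbral1}/\eqref{E:Hermite1} to take expectation over the independent increments, and then estimate the remaining $\Fc_n$-measurable expectation, which reduces to the same kind of Gaussian-tail computation but now at scale $e^{-n}$; summing over $n$ from $n_0$ to $N$ and tracking the $k$-dependence with Lemma~\ref{L:elementary_sum} gives the claimed geometric factor $u^{2k}k!^2$ and, crucially, the improved exponent $\tfrac12(2\gamma-\hat\gamma)_+^2 - (\hat\gamma-\gamma)^2$ when $\hat\gamma<2\gamma$: the barrier constraint at the dangerous scale and the $2\gamma$-tilt partially cancel, and $(2\gamma-\hat\gamma)_+^2/2$ is exactly the surviving exponential rate. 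Third, for pointwise convergence as $\eps_1,\eps_2\to 0$, fix $x\ne y$ (so $r=\min(\delta,|x-y|)>0$ stays bounded away from $0$), observe that for $\eps_i$ below $r$ the events $E_n^{(i)}$ with $n\le$ (scale of $r$) stabilise, the covariances $C_{\eps_1,\eps_2}(x,y)$, $C_{n,\eps_i}$ converge by \eqref{E:L_covariance1}, \eqref{E:L_covariance7}, and the field $X_{\eps_i}(x)$ becomes, for $\eps_i$ small, a small-variance perturbation one can control by dominated convergence inside the expectation; combined with the uniform bound \eqref{E:boundA} this gives convergence of $A(k,x,y,\eps_1,\eps_2)$.

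**Main obstacle.** The delicate point is the scale-by-scale bookkeeping of the Hermite polynomials simultaneously with the truncation: one must choose the auxiliary parameter $\rho$ in Lemma~\ref{L:Hermite_bound} (or equivalently the split in \eqref{E:L_Hermite_umbral1}) in a way that is uniform in $k$ yet tight enough to produce the sharp constant $u>\sqrt2/(\hat\gamma-\gamma)$ rather than a worse one, and to make sure the Gaussian quadratic-form exponents assemble into exactly the exponent of $r$ stated in \eqref{E:boundA} — in particular the emergence of $(2\gamma-\hat\gamma)_+^2/2$ from the dangerous-scale analysis is what forces the hypothesis $\hat\gamma<2\gamma$ in Proposition~\ref{P:bounded_L2} and requires care (it is the same mechanism flagged in Remark~\ref{R:restriction}). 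The argument via \eqref{E:Hermite2} will produce a sum over $l=0,\dots,k$ of terms with combinatorial weights $k!^2/(l!(k-l)!^2)$ and factors $\rho^l$ (here $\rho$ being a ratio of $\sigma$'s, hence $\le 1$ with the defect controlled by the decoupling scale); bounding this sum to get a clean $k!^2$ with the right geometric rate, uniformly over all the scales, is the computational heart of the proof.
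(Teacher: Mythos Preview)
Your proposal identifies the right toolbox (the martingale identity \eqref{E:Hermite1}, the decoupling via Lemma~\ref{L:independence}, the pointwise bound \eqref{E:L_Hermite_upper_bound}, and Lemma~\ref{L:elementary_sum} for the sum over scales), and the shape of the argument is correct. However, two aspects of your plan are not quite aligned with what actually works.

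\textbf{The decomposition.} The paper does not partition $G_{N,N}(x,y)$ according to a ``first dangerous scale where the field is within $K$ of the barrier''. Instead, it takes $\mathfrak{n}$ to be the separation scale ($e^{-\mathfrak{n}}\asymp |x-y|\wedge\delta$) and writes $\mathbf{1}_{G_{N,N}}$ as $\mathbf{1}_{G_{\mathfrak{n}+1,\mathfrak{n}+1}}$ plus a double sum over pairs $(n,m)$ of scales $\ge \mathfrak{n}+1$, each term carrying the indicator of $G_{n,m}\cap E^{(1)}_{n+1}(x,y)^c\cap E^{(2)}_{m+1}(x,y)^c$ (together with the two single sums where only one of the constraints fails). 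This is the two-point version of the ``martingale trick'' illustrated just before the proof: one replaces $\mathbf{1}_{E_T}$ by $\mathbf{1}_{E_{T-1}} - \mathbf{1}_{E_{T-1}\cap\{B_T>T\}}$. The point of using the \emph{complements} $E^{(i)}_{n+1}(x,y)^c$ is that each carries the rare event ``field exceeds the barrier at scale $n+1$'', which has a clean Gaussian tail $\exp(-\tfrac12(\hat\gamma-\gamma)^2(n-\mathfrak{n}))$, and simultaneously the event $G_{n,m}$ is $\Fc_{n+1,m+1}(x,y,\eps)$-measurable so one can condition on it. Your ``close to the barrier by $K$'' decomposition does not generate these rare events and would not produce the summable decay in $n$.

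\textbf{The decoupling.} You suggest invoking \eqref{E:Hermite2} for the product of the two Hermite factors. The paper does not use \eqref{E:Hermite2} here: once one conditions on $\Fc_{n+1,m+1}(x,y,\eps)$, Lemma~\ref{L:independence} makes $X_\eps(x)$ and $X_\eps(y)$ \emph{conditionally independent} (this is where $n,m\ge\mathfrak{n}$ is used), so the conditional expectation of the product splits into a product of conditional expectations, and \eqref{E:Hermite1} is applied \emph{separately} to each factor. This converts $\sigma_\eps^k H_k(\cdots/\sigma_\eps)$ into $\sigma_{\eps,n+1}^k H_k(\cdots/\sigma_{\eps,n+1})$, and one then bounds each Hermite via \eqref{E:L_Hermite_upper_bound} at the reduced scale $n+1$ rather than at $\eps$. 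The combinatorial sum over $l$ you anticipate from \eqref{E:Hermite2} never appears; what remains is just the product of two one-point bounds.

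\textbf{Convergence.} Your dominated-convergence sketch is too loose: the variance of $X_{\eps_i}(x)$ diverges, so it is not a ``small-variance perturbation''. The paper re-runs the same inclusion--exclusion at a large fixed $\mathfrak{n}$ (now larger than $-\log|x-y|$); the first term depends only on the field up to scale $\mathfrak{n}+1$ and therefore stabilises as $\eps\to0$, while the remaining three sums are bounded uniformly in $\eps$ by a quantity $c(\mathfrak{n})\to0$.
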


Bounding this expectation requires care. For instance, putting absolute values around the Hermite polynomials will make it diverge. We will use a ``martingale trick'' to integrate out some randomness before putting such absolute values around the Hermite polynomials. 
We illustrate this idea with a toy model, corresponding to a one-point version of the above.
Related ideas can be found in \cite[Section 5]{zbMATH07493822} in the context of complex GMC.

\smallskip

\noindent\textbf{Martingale trick.} 
Let $(B_t)_{t \ge 0}$ be a one-dimensional standard Brownian motion and consider the events
\begin{equation}
\label{E:MG_trick}
E_T = \{ \forall t \in \{0,\dots, T\}: B_t \le t\}, \qquad T \in \N.
\end{equation}
Letting $k \ge 0$, we wish to show that $\sup_{T \ge 1} T^{k/2} |\E[H_k(B_T/\sqrt{T}) \mathbf{1}_{E_T}]| < \infty$. Let $T \ge 1$. We can rewrite
\begin{equation}\label{E:MG_trick1}
T^{k/2} \E[H_k(B_T/\sqrt{T}) \mathbf{1}_{E_T}] = T^{k/2} \E[H_k(B_T/\sqrt{T}) \mathbf{1}_{E_{T-1}}] - T^{k/2} \E[H_k(B_T/\sqrt{T}) \mathbf{1}_{E_{T-1}\cap\{B_T > T\}}].
\end{equation}
A standard computation with the Gaussian distribution shows that the absolute value of the second right hand side term is at most
\[
T^{k/2} \E[|H_k(B_T/\sqrt{T})| \mathbf{1}_{\{B_T > T\}}] \le C(k) T^{k-1/2} e^{-T/2}.
\]
On the other hand, because $(t^{k/2} H_k(B_t/\sqrt{t}))_{t \ge 0}$ is a martingale (Lemma \ref{L:Hermite_martingale1d}), the first term on the right hand side of \eqref{E:MG_trick1} equals $(T-1)^{k/2} \E[H_k(B_{T-1}/\sqrt{T-1}) \mathbf{1}_{E_{T-1}}]$. Hence
\[
| T^{k/2} \E[H_k(B_T/\sqrt{T}) \mathbf{1}_{E_T}] - (T-1)^{k/2} \E[ H_k(B_{T-1}/\sqrt{T-1}) \mathbf{1}_{E_{T-1}}] | \le C(k) T^{k-1/2} e^{-T/2}.
\]
Iterating this argument, we deduce that
\[
| T^{k/2} \E[H_k(B_T/\sqrt{T}) \mathbf{1}_{E_T}] | \le C(k) \sum_{t=1}^T t^{k-1/2} e^{-t/2} \le C(k) \sum_{t=1}^\infty t^{k-1/2} e^{-t/2} < \infty.
\]

\begin{proof}[Proof of Lemma \ref{L:boundA}]
To ease notations, let us prove the lemma when $\eps_1=\eps_2$.
Let $\delta = e^{-n_0}$ and $\eps = e^{-N}$ where $N \ge n_0 \ge 0$.

\noindent\textbf{Claim.} There exists $C>0$ such that for all $|x-y| < e^{-1} \delta$,
\begin{equation}\label{E:claim_proba}
\P(G_{\eps,\delta}(x,y)) \le C  (\eps \vee |x-y|)^{(2\gamma-\hat\gamma)_+^2/2}.
\end{equation}
Indeed, if $|x-y| \le \eps$, $C_{N,\eps}(x_N,x)$ and $C_{N,\eps}(x_N,y)$ are both equal to $N+O(1)$ by Lemma~\ref{L:covariance} and we bound
\[
\P(G_{\eps,\delta}(x,y)) \le \P(X_N(x_N) \le (\hat\gamma - 2\gamma)N + O(1)) \le C \eps^{(2\gamma-\hat\gamma)_+^2/2}.
\]
Note that the bound is trivial when $\hat\gamma>2\gamma$.
If $\eps< |x-y| < e^{-1} \delta$, 
we consider some $n \in \{n_0,\dots,N\}$ such that $e^{-1-n} \le |x-y| < e^{-n}$ and bound
\begin{align*}
\P(G_{\eps,\delta}(x,y)) \le \P(X_n(x_n) \le (\hat\gamma-2\gamma)n + O(1)) \le C e^{-(2\gamma-\hat\gamma)_+^2 n/2} \le C |x-y|^{(2\gamma-\hat\gamma)_+^2/2}
\end{align*}
which shows \eqref{E:claim_proba}.

\smallskip

\noindent\textbf{Case 1: $|x-y| \le e^3 \eps.$} In this case, the event $G_{\eps,\delta}$ imposes that $X_N(x_N) \le (\hat\gamma-2\gamma)N + C$, so we can bound $A(k,x,y,\eps,\eps)$ by
\begin{align*}
    \sum_{p \ge 0} \sigma_\eps^{2k} \E\Big[ \mathbf{1}_{\frac{X_N(x_N) - (\hat\gamma-2\gamma)N-C}{\sqrt{N}} \in (-p-1,-p] } \Big| H_k\Big( \frac{X_\eps(x)+\gamma C_\eps(x,y)}{\sigma_\eps} \Big) H_k\Big( \frac{X_\eps(y)+\gamma C_\eps(y,x)}{\sigma_\eps} \Big) \Big| \Big].
\end{align*}
By Lemma \ref{L:covariance} and because $|x-y| \le e^3 \eps$, $X_\eps(x)-X_N(x_N)$ and $X_\eps(y)-X_N(x_N)$ are Gaussians with uniformly bounded variance. On the event that $(X_N(x_N) - (\hat\gamma-2\gamma)N-C)/\sqrt{N} \in (-p-1,-p]$, $(X_\eps(x) + \gamma C_\eps(x,y))/\sigma_\eps$ is thus very concentrated around $(\hat\gamma-\gamma) \sqrt{N} -p$.
By Lemma \ref{L:Hermite_bound}, for any $\rho \in (0,1)$, there exists $C=C(\rho)$ such that for all $x \in \R$,
\[
|H_k(x)| \le C \rho^{-k/2} \sqrt{k!} e^{\frac{\rho}{2(1+\rho)}x^2}.
\]
Altogether, this leads to
\begin{align*}
    A(k,x,y,\eps,\eps) & \le C \rho^{-k} k! \sigma_\eps^{2k} \sum_{p \ge 0} e^{\frac{\rho}{1+\rho}((\hat\gamma-\gamma) \sqrt{N} -p)^2} e^{-((2\gamma-\hat\gamma)\sqrt N + p)^2/2}\\
    & \le C \rho^{-k} k! \log(C/\eps)^k \eps^{-\frac{\rho}{1+\rho}(\hat\gamma-\gamma)^2 + \frac12(2\gamma-\hat\gamma)_+^2}.
\end{align*}
We now conclude with elementary inequalities to obtain the desired form for the upper bound.
For any $\eta >0$, there exists $C=C(\eta)>0$ such that $\log(C/\eps)^k \le C(\eta) (1+\eta)^k \log(1/\eps)^k$. Furthermore, using the inequality $t^k \le k! e^t$ for any $t \ge 0$, we can bound
\[
\log(1/\eps)^k = \Big( \frac{1+\rho}{(\hat\gamma-\gamma)^2} \Big)^k \Big( \frac{(\hat\gamma-\gamma)^2}{1+\rho} \log \frac{1}{\eps} \Big)^k \le k! \Big( \frac{1+\rho}{(\hat\gamma-\gamma)^2} \Big)^k \eps^{-\frac{(\hat\gamma-\gamma)^2}{1+\rho}}.
\]
This leads to the bound
\[
A(k,x,y,\eps,\eps) \le C k!^2 \Big( \frac{(1+\eta)(1+\rho)}{\rho(\hat\gamma-\gamma)^2} \Big)^k \eps^{-(\hat\gamma-\gamma)^2 + \frac12(2\gamma-\hat\gamma)_+^2}.
\]
The term $\frac{(1+\eta)(1+\rho)}{\rho(\hat\gamma-\gamma)^2}$ can be made arbitrarily close to $2(\hat\gamma-\gamma)^{-2}$ by taking $\eta$ close enough to 0 and $\rho$ close enough to 1. This concludes the proof of \eqref{E:boundA} when $|x-y| \le e^3 \eps.$

\smallskip\noindent\textbf{Case 2: $|x-y| \ge e^3 \eps.$}
This case will be handled by using a more elaborate version of the martingale trick \eqref{E:MG_trick1}.
Let $\mathfrak{n} \in \{n_0,\dots, N-2\}$ be such that $e^{-\nf-1} \le (e^{-1}\delta \wedge |x-y| ) \le e^{-\nf}$.
Recalling the definition \eqref{E:goodgood} of the good events $G_{n,m}(x,y)$, we can write
\begin{align}\label{E:inclusion_exclusion}
\mathbf{1}_{G_{N,N}(x,y)} & = \mathbf{1}_{G_{\nf+1,\nf+1}(x,y)} + \sum_{n=\nf+1}^{N-1}  \mathbf{1}_{G_{n,\nf+1}(x,y) \cap E_{n+1}^{(1)}(x,y)^c} + \sum_{m=\nf+1}^{N-1}  \mathbf{1}_{G_{\nf+1,m}(x,y) \cap E_{m+1}^{(2)}(x,y)^c} \\
& + \sum_{n,m=\nf+1}^{N-1} \mathbf{1}_{G_{n,m}(x,y) \cap E_{n+1}^{(1)}(x,y)^c \cap E_{m+1}^{(2)}(x,y)}.\notag
\end{align}
Recall the definition \eqref{E:filtration2} of the $\sigma$-algebra $\Fc_{n,m}(x,y,\eps)$. The events $G_{n,m}(x,y)$, $E_n^{(1)}(x,y)$ and $E_m^{(2)}(x,y)$ are $\Fc_{n,m}(x,y,\eps)$-measurable. Moreover, by Lemma \ref{L:independence}, conditionally on $\Fc_{n+1,m+1}(x,y,\eps)$, $X_\eps(x)$ and $X_\eps(y)$ are independent as soon as $|x-y| \ge 2\eps + e^{-n-1} + e^{-m-1}$. One can check that this is satisfied as soon as $n \ge \nf$ and $m \ge \nf$. 
Hence, for all $n,m=\nf, \dots, N-1$,
\begin{align*}
    & \E\Big[ H_k\Big( \frac{X_\eps(x)+\gamma C_\eps(x,y)}{\sigma_\eps} \Big) H_k\Big( \frac{X_\eps(y)+\gamma C_\eps(y,x)}{\sigma_\eps} \Big) \Big| \Fc_{n+1,m+1}(x,y,\eps) \Big] \\
    & = \E\Big[ H_k\Big( \frac{X_\eps(x)+\gamma C_\eps(x,y)}{\sigma_\eps} \Big) \Big| \Fc_{n+1,m+1}(x,y,\eps) \Big] \E\Big[ H_k\Big( \frac{X_\eps(y)+\gamma C_\eps(x,y)}{\sigma_\eps} \Big) \Big| \Fc_{n+1,m+1}(x,y,\eps) \Big].
\end{align*}
Let
\[
\sigma_{\eps,n}^2 = \var( \E[X_\eps(x)\vert\Fc_{n}(x,\eps)]), \qquad n \ge 0.
\]
We can now apply Lemma \ref{L:Hermite_martingale} to
\begin{align*}
m & = \frac{ \E[X_\eps(x)\vert\Fc_{n+1}(x,\eps)]+ \gamma C_\eps(x,y) }{\sigma_\eps}
\hspace{40pt} \text{and} \\
\sigma^2 & = \frac{\var(X_\eps(x) - \E[X_\eps(x)\vert\Fc_{n+1}(x,\eps)])}{\sigma_\eps^2} = 1-\frac{\sigma_{\eps,n+1}^2}{\sigma_\eps^2}
\end{align*}
to get that for all $n,m=\nf, \dots, N-1$,
\[
\sigma_\eps^k \E\Big[ H_k\Big( \frac{X_\eps(x)+\gamma C_\eps(x,y)}{\sigma_\eps} \Big) \Big| \Fc_{n+1,m+1}(x,y,\eps) \Big] = \sigma_{\eps,n+1}^k H_k \Big( \frac{ \E[X_\eps(x)\vert\Fc_{n+1}(x,\eps)]+ \gamma C_\eps(x,y) }{\sigma_{\eps,n+1}} \Big)
\]
We have obtained that
\begin{align}\label{E:bigbig}
    & A(k,x,y,\eps,\eps) = \sigma_{\eps,\nf+1}^{2k} \E\Big[ \prod_{z=x,y} H_k \Big( \frac{ \E[X_\eps(z)\vert\Fc_{\nf+1}(z,\eps)]+ \gamma C_\eps(x,y) }{\sigma_{\eps,\nf+1}} \Big) \mathbf{1}_{G_{\nf+1,\nf+1}(x,y)} \Big] \\
    \notag
    & + \sum_{n_x=\nf+1}^{N-1} \E\Big[ \prod_{z=x,y} \Big( \sigma_{\eps,n_z+1}^k H_k \Big( \frac{ \E[X_\eps(z)\vert\Fc_{n_z+1}]+ \gamma C_\eps(x,y) }{\sigma_{\eps,n_z+1}} \Big) \Big) \mathbf{1}_{G_{n_x,\nf+1}(x,y) \cap E^{(1)}_{n_x+1}(x,y)^c} \Big] \\
    \notag
    & + \sum_{n_y=\nf+1}^{N-1} \E\Big[ \prod_{z=x,y} \Big( \sigma_{\eps,n_z+1}^k H_k \Big( \frac{ \E[X_\eps(z)\vert\Fc_{n_z+1}]+ \gamma C_\eps(x,y) }{\sigma_{\eps,n_z+1}} \Big) \Big) \mathbf{1}_{G_{\nf+1,n_y}(x,y) \cap E^{(2)}_{n_y+1}(x,y)^c} \Big] \\
    \notag
    & + \sum_{n_x,n_y=\nf+1}^{N-1} \E\Big[ \prod_{z=x,y} \Big( \sigma_{\eps,n_z+1}^k H_k \Big( \frac{ \E[X_\eps(z)\vert\Fc_{n_z+1}]+ \gamma C_\eps(x,y) }{\sigma_{\eps,n_z+1}} \Big) \Big) \mathbf{1}_{G_{n_x,n_y}(x,y) \cap E^{(1)}_{n_x+1}(x,y)^c \cap E^{(2)}_{n_y+1}(x,y)^c} \Big]
\end{align}
where in the second (resp. third) term, $n_y$ (resp. $n_x$) is by convention set to $\nf$.

Let's focus on the last right hand side term of \eqref{E:bigbig}.
Let $n_x,n_y \in \{\nf,\dots, N-1\}$. As before, on the event $G_{n_x,n_y}(x,y)$,
\[ 
\frac{ \E[X_\eps(z)\vert\Fc_{n_z+1}(z,\eps)]+ \gamma C_\eps(x,y) }{\sigma_{\eps,n_z+1}}
\text{ concentrates around } (\hat\gamma-\gamma) \sqrt{n_z}.
\]
Using Lemma \ref{L:Hermite_bound}, we get that
\begin{align*}
    & \E\Big[ \prod_{z=x,y} \sigma_{\eps,n_z+1}^k H_k \Big( \frac{ \E[X_\eps(z)\vert\Fc_{n_z+1}]+ \gamma C_\eps(x,y) }{\sigma_{\eps,n_z+1}} \Big) \mathbf{1}_{G_{n_x,n_y}(x,y) \cap E^{(1)}_{n_x+1}(x,y)^c \cap E^{(2)}_{n_y+1}(x,y)^c} \Big] \\
    & \le C n_x^{k/2} n_y^{k/2} \rho^{-k} k! e^{\frac{\rho}{2(1+\rho)}(\hat\gamma-\gamma)^2 (n_x+n_y)} \P (G_{n_x,n_y}(x,y) \cap E^{(1)}_{n_x+1}(x,y)^c \cap E^{(2)}_{n_y+1}(x,y)^c).
\end{align*}
On the event $G_{n_x,n_y}(x,y) \cap E^{(1)}_{n_x+1}(x,y)^c \cap E^{(2)}_{n_y+1}(x,y)^c$, $X_\nf(x) \le (\hat\gamma-2\gamma)\nf+O(1)$ and for all $z\in \{x,y\}$, $X_{n_z+1}(z)-X_\nf(z) \ge (\hat\gamma-\gamma)(n_z-\nf)+O(1)$. Since these random variables are independent, we deduce that
\begin{align*}
\P(G_{n_x,n_y}(x,y) \cap E^{(1)}_{n_x+1}(x,y)^c \cap E^{(2)}_{n_y+1}(x,y)^c)
& \le C e^{-\frac12(\hat\gamma-\gamma)^2 (n_x+n_y-2\nf)} e^{-\frac12(2\gamma-\hat\gamma)_+^2 \nf}.
\end{align*}
Notice that, when $\hat\gamma\ge2\gamma$, we simply bound the probability that $X_\nf(x) \le (\hat\gamma-2\gamma)\nf+O(1)$ by one.
Putting things together, the last term on the right hand side of \eqref{E:bigbig} is at most
\begin{align*}
    C \rho^{-k} k! e^{-( \frac12(2\gamma-\hat\gamma)_+^2 - \frac{\rho}{1+\rho} (\hat\gamma-\gamma)^2 ) \nf} \Big( \sum_{n=\nf+1}^{N-1} n^{k/2} e^{-\frac12(1-\frac{\rho}{1+\rho})(\hat\gamma-\gamma)^2(n-\nf)} \Big)^2.
\end{align*}
Comparing the sum with an integral and then integrating by parts successively, we have
\begin{align*}
    \sum_{n=\nf+1}^{N-1} n^{k/2} e^{-\frac12(1-\frac{\rho}{1+\rho})(\hat\gamma-\gamma)^2(n-\nf)} 
    \le C \int_{\nf+1}^\infty u^{k/2} e^{-\frac{(\hat\gamma-\gamma)^2}{2(1+\rho)}(u-\nf)} \d u
    \le C \sum_{i=0}^{k/2} \frac{(k/2)!}{(k/2-i)!} \nf^{k/2-i} \Big( \frac{2(1+\rho)}{(\hat\gamma-\gamma)^2} \Big)^i.
\end{align*}
So the last right hand side term of \eqref{E:bigbig} is at most
\begin{align*}
    & C \rho^{-k} k! e^{-( \frac12(2\gamma-\hat\gamma)_+^2 - \frac{\rho}{1+\rho} (\hat\gamma-\gamma)^2 ) \nf} \Big( \sum_{i=0}^{k/2} \frac{(k/2)!}{(k/2-i)!} \nf^{k/2-i} \Big( \frac{2(1+\rho)}{(\hat\gamma-\gamma)^2} \Big)^i \Big)^2 \\
    & \le C \rho^{-k} k! (k/2)!^2 e^{-( \frac12(2\gamma-\hat\gamma)_+^2 - \frac{\rho}{1+\rho} (\hat\gamma-\gamma)^2 ) \nf} \Big( \frac{2(1+\rho)}{(\hat\gamma-\gamma)^2} \Big)^{k} \exp \left( \frac{(\hat\gamma-\gamma)^2}{1+\rho} \nf \right) \\
    & = C k! (k/2)!^2 e^{-( \frac12(2\gamma-\hat\gamma)_+^2 - (\hat\gamma-\gamma)^2 ) \nf} \Big( \frac{2(1+\rho)}{\rho(\hat\gamma-\gamma)^2} \Big)^{k}.
\end{align*}
By Stirling's formula, $(k/2)!^2 \sim \sqrt{\frac{\pi}{2}k} 2^{-k} k!$, so the above term is bounded by
\[
C \sqrt{k} k!^2 e^{-( \frac12(2\gamma-\hat\gamma)_+^2 - (\hat\gamma-\gamma)^2 ) \nf} \Big( \frac{1+\rho}{\rho(\hat\gamma-\gamma)^2} \Big)^{k}.
\]
For any $u > \sqrt2/(\hat\gamma-\gamma)$, by taking $\rho$ close enough to 1, we can ensure the existence of a constant $C=C(u)>0$ such that for all $k \ge 1$,
\[
\sqrt{k} \Big( \frac{1+\rho}{\rho(\hat\gamma-\gamma)^2} \Big)^{k} \le u^{2k}.
\]
Overall, this shows that the last right hand side term of \eqref{E:bigbig} is at most
\[ 
C k!^2 u^{2k} e^{-( \frac12(2\gamma-\hat\gamma)_+^2 - (\hat\gamma-\gamma)^2 ) \nf}.
\]
The other three right hand side terms of \eqref{E:bigbig} can be handled in a similar manner resulting in upper bounds controlled by the above display. This concludes the proof of \eqref{E:boundA}.

\medskip

The proof of the convergence of $A(k,x,y,\eps,\eps)$ for fixed distinct $x,y\in[0,1]^d$ also follows. Indeed, we can decompose $A(k,x,y,\eps,\eps)$ exactly as in \eqref{E:bigbig} but where $\nf$ is this time a large integer (larger than $-\log|x-y|$). From similar computations as we did above, the sum of the last three right hand side term of \eqref{E:bigbig} can be bounded by $c(\nf)$, uniformly in $\eps$, where $c(\nf)\to 0$ as $\nf\to \infty$. For $\nf$ fixed, the first right hand side term of \eqref{E:bigbig} converges as $\eps\to0$ to
\[
\sigma_{\nf+1}^{2k} \E\Big[ \prod_{z=x,y} H_k \Big( \frac{ \E[X(z)\vert\Fc_{\nf+1}(z,0)]+ \gamma C(x,y) }{\sigma_{\nf+1}} \Big) \mathbf{1}_{G_{\nf+1,\nf+1}(x,y)} \Big].
\]
This proves that
\[
\limsup_{\eps_1,\eps_2\to 0} |A(k,x,y,\eps_1,\eps_1) - A(k,x,y,\eps_2,\eps_2)| \le c(\nf).
\]
Since the left hand side does not depend on $\nf$, it must vanish. The sequence $(A(k,x,y,\eps,\eps))_{\eps\in (0,\delta)}$ is thus Cauchy and hence converges. This concludes the proof.
\end{proof}

We are now ready to prove Proposition \ref{P:bounded_L2}.

\begin{proof}[Proof of Proposition \ref{P:bounded_L2}]
Recall from \eqref{E:linkAI} that
\begin{align*}
\E[I_{\eps,\delta}^{\af,\good}(f,k)^2]
& = \int_{[0,1]^d \times [0,1]^d} f(x)f(y) e^{\gamma^2 C_\eps(x,y)} A(k,x,y,\eps,\eps) \d x \d y.
\end{align*}
By Lemma \ref{L:boundA},
the contribution of $|x-y| < \delta$ is bounded by
\begin{align*}
    & C u^{2k} k!^2 \|f\|_\infty^2 \int_{[0,1]^d \times [0,1]^d} \mathbf{1}_{|x-y| \le \delta} (|x-y| \vee \eps)^{-\gamma^2 + \frac12 (2\gamma-\hat\gamma)_+^2-(\hat\gamma-\gamma)^2} \d x \d y.
\end{align*}
When $\hat\gamma < \sqrt{2d}\wedge(2\gamma)$, the exponent $-\gamma^2 + \frac12 (2\gamma-\hat\gamma)_+^2-(\hat\gamma-\gamma)^2$ is larger than $-d$.
In that case, the integral is uniformly bounded in $\eps$ and can be bounded by a constant depending only on $\gamma$ and $\hat\gamma$.
Using Lemma \ref{L:boundA}, 
the contribution of $|x-y| > \delta$ to the integral is bounded by
$C u^{2k} k!^2 \|f\|_\infty^2 \delta^{-\gamma^2 + \frac12 (2\gamma-\hat\gamma)^2-(\hat\gamma-\gamma)^2}$.
This concludes the proof of \eqref{E:P_bdd_L2}.

We now wish to show that $(I_{\eps,\delta}^{\af,\good}(f,k))_{\eps\in (0,\delta)}$ is Cauchy in $L^2$. We expand
\begin{align*}
    \E[(I_{\eps_1,\delta}^{\af,\good}(f,k)-I_{\eps_2,\delta}^{\af,\good}(f,k))^2]
    & = \E[I_{\eps_1,\delta}^{\af,\good}(f,k)^2] + \E[I_{\eps_2,\delta}^{\af,\good}(f,k)^2] \\
    & - 2 \E[I_{\eps_1,\delta}^{\af,\good}(f,k)I_{\eps_2,\delta}^{\af,\good}(f,k)]
\end{align*}
and use \eqref{E:linkAI} to write the three right hand side expectations as integrals involving $A(k,x,y,\eps,\eps')$ with $\eps,\eps'\in\{\eps_1,\eps_2\}$. Lemma \ref{L:boundA} provides the domination we need to apply dominated convergence theorem and we get that
\begin{align*}
    & \limsup_{\eps_1,\eps_2\to 0} \E[(I_{\eps_1,\delta}^{\af,\good}(f,k)-I_{\eps_2,\delta}^{\af,\good}(f,k))^2]\\
    & = \hspace{-2pt} \int_{[0,1]^d \times [0,1]^d} \hspace{-23.5pt} f(x)f(y) \limsup_{\eps_1,\eps_2\to 0} \Big( \sum_{j=1}^2 e^{\gamma^2 C_{\eps_j}(x,y)} A(k,x,y,\eps_j,\eps_j) - 2 e^{\gamma^2 C_{\eps_1,\eps_2}(x,y)} A(k,x,y,\eps_1,\eps_2) \Big) \d x \d y.
\end{align*}
By Lemma \ref{L:boundA} and \ref{L:covariance} respectively, both $A(k,x,y,\eps_1,\eps_2)$ and $C_{\eps_1,\eps_2}(x,y)$ converge pointwise as $\eps_1,\eps_2\to 0$. This shows that $(I_{\eps,\delta}^{\af,\good}(f,k))_{\eps\in (0,\delta)}$ is Cauchy which concludes the proof.
\end{proof}

\subsection{Higher moment, good event}\label{SS:higher}

In this section, we prove Proposition \ref{P:bound_pmoment}. Since it is very similar to the proof of Proposition \ref{P:bounded_L2}, we only sketch the proof.

\begin{proof}[Sketch of proof of Proposition \ref{P:bound_pmoment}]
    The restriction $\gamma < \sqrt{2d/p}$ corresponds to the phase where GMC possesses finite $p$-moment (see e.g. \cite[Theorem 2.11]{zbMATH06370363}) although, as we will see, we could consider slightly larger values of $\gamma$ thanks to the presence of the good event.
    To ease notations, let us consider the third moment, i.e. bound
    \begin{align}\label{E:p=2a}
        \E[I_{\eps,\delta}^{\af,\good}(f,k)^3]
        =\int_{([0,1]^d)^3} f(x)f(y)f(z) \E\Big[\prod_{w=x,y,z}
        \wick{X_\eps(w)^k e^{\gamma X_\eps(w)}}\mathbf{1}_{G_{\eps,\delta}(w)}\Big] \d x \d y \d z.
    \end{align}
    Let $x,y,z\in [0,1]^d$. Denote by $r_x = |x-y|\wedge|x-z|$, $r_y = |x-y|\wedge|y-z|$ and $r_z = |x-z|\wedge|y-z|$.
    Let us focus on the main case where for all $w=x,y,z$, $r_w \in (\eps,\delta)$. Without loss of generality, we may assume that $r_z \ge r_x\vee r_y$, so that $x$ and $y$ are ``close to each other'', while $z$ is further away.
    By Cameron--Martin formula, we can rewrite
    \begin{align}\label{E:p=2b}
        & \E\Big[\prod_{w=x,y,z}
        \wick{X_\eps(w)^k e^{\gamma X_\eps(w)}}\mathbf{1}_{G_{\eps,\delta}(w)}\Big]
        = e^{\gamma^2\E[X_\eps(x)X_\eps(y) + X_\eps(y)X_\eps(z)+X_\eps(z)X_\eps(x)]} \times \\
        & \hspace{40pt} \times
        \E\Big[\prod_{w=x,y,z} H_k \Big( \frac{X_\eps(w) +\gamma \sum_{w' \in \{x,y,z\}\setminus\{w\}} C_\eps(w,w') }{\sigma_\eps} \Big) \mathbf{1}_{G_{N,N,N}(x,y,w)} \Big],
        \notag
    \end{align}
    where
    \begin{align*}
        G_{N_1,N_2,N_3}(x,y,w) = \bigcap_{n=n_0}^{N_1} E_n^{(1)}(x,y,z) \cap \bigcap_{n=n_0}^{N_2} E_n^{(2)}(x,y,z) \cap \bigcap_{n=n_0}^{N_3} E_n^{(3)}(x,y,z)
    \end{align*}
    where for all $n \in \{n_0,\dots,N\}$,
\begin{align}\nonumber
E_n^{(1)}(x,y,z) & = 
 \{ X_n(x_n) \le \hat\gamma n - \gamma \sum_{w=x,y,z} C_{n,\eps}(x_n,w) \}, \\
E_n^{(2)}(x,y,z) & = \{ X_n(y_n) \le \hat\gamma n - \gamma \sum_{w=x,y,z} C_{n,\eps}(y_n,w) \}, \\
E_n^{(3)}(x,y,z) & = \{ X_n(z_n) \le \hat\gamma n - \gamma \sum_{w=x,y,z} C_{n,\eps}(z_n,w) \}.
\end{align}
As in the $p=2$ case, we decompose the event $G_{N,N,N}(x,y,z)$ using an inclusion-exclusion sum; see \eqref{E:inclusion_exclusion}. When there were only two points $x,y$, the targeted largest scale was denote by $\nf$ and was such that $e^{-\nf-1}\le|x-y| \le e^{-\nf}$. In the present case, we consider integers $\nf_w$ such that $e^{-\nf_w-1}\le r_w \le e^{-\nf_w}$, for $w=x,y,z$. Using our martingale trick, we need to estimate
\begin{align}\label{E:p=3}
    & \sum_{\substack{\nf_w+1\le n_w\le N-1\\w=x,y,z}} \E\Big[ \prod_{w=x,y,z} \Big( \sigma_{\eps,n_w+1}^k H_k \Big( \frac{ \E[X_\eps(w)\vert\Fc_{n_w+1}]+\gamma \sum_{w' \in \{x,y,z\}\setminus\{w\}} C_\eps(w,w') }{\sigma_{\eps,n_w+1}} \Big) \Big) \times \\
    & \hspace{40pt} \times \mathbf{1}_{G_{n_x,n_y,n_z}(x,y,z) \cap E^{(1)}_{n_x+1}(x,y,z)^c \cap E^{(2)}_{n_y+1}(x,y,z)^c \cap E^{(3)}_{n_z+1}(x,y,z)^c} \Big]
    \notag
\end{align}
as well as other boundary terms with a similar form; see \eqref{E:bigbig}. As in the $p=2$ case, we can bound the above expectation by
\begin{align*}
    & C \prod_{w=x,y,z} \big( n_w^{k/2} \rho^{-k/2} \sqrt{k!} e^{\frac{\rho}{2(1+\rho)}(\hat\gamma-\gamma)^2 n_w} \big) \\
    & \times \P(G_{n_x,n_y,n_z}(x,y,z) \cap E^{(1)}_{n_x+1}(x,y,z)^c \cap E^{(2)}_{n_y+1}(x,y,z)^c \cap E^{(3)}_{n_z+1}(x,y,z)^c).
\end{align*}
On the event $G_{n_x,n_y,n_z}(x,y,z) \cap E^{(1)}_{n_x+1}(x,y,z)^c \cap E^{(2)}_{n_y+1}(x,y,z)^c \cap E^{(3)}_{n_z+1}(x,y,z)^c$, we have $X_{\nf_z}(z) \le (\hat\gamma-3\gamma)\nf_z+O(1)$ (recall that we have assumed that $r_z \ge r_x\vee r_y$), and for all $w=x,y,z$,
$X_{n_w+1}(w)-X_{\nf_z}(z) \ge (\hat\gamma-\gamma)(n_w-\nf_z)+O(1)$ and we get
\begin{align*}
& \P(G_{n_x,n_y,n_z}(x,y,z) \cap E^{(1)}_{n_x+1}(x,y,z)^c \cap E^{(2)}_{n_y+1}(x,y,z)^c \cap E^{(3)}_{n_z+1}(x,y,z)^c)\\
& \le C e^{-\frac12(3\gamma-\hat\gamma)_+^2 \nf_z} e^{-\frac12(\hat\gamma-\gamma)^2(\nf_x-\nf_z)} \prod_{w=x,y,z} e^{-\frac12(\hat\gamma-\gamma)^2 (n_w-\nf_w)}.
\end{align*}
The sum displayed in \eqref{E:p=3} is thus at most
\begin{align*}
    & C \rho^{-3k/2} k!^{3/2} e^{-\frac12(3\gamma-\hat\gamma)_+^2 \nf_z} e^{-\frac12(\hat\gamma-\gamma)^2(\nf_x-\nf_z)} e^{\frac{\rho}{2(1+\rho)}(\hat\gamma-\gamma)^2 (\nf_x+\nf_y+\nf_z)} \\
    & \times \prod_{w=x,y,z} \Big(\sum_{n_w=\nf_w+1}^{N-1} n_w^{k/2} e^{-\frac1{2(1+\rho)}(\hat\gamma-\gamma)^2 (n_w-\nf_w)} \Big) \\
    & \le C \rho^{-3k/2} k!^{3/2} e^{-\frac12(3\gamma-\hat\gamma)_+^2 \nf_z}  e^{-\frac12(\hat\gamma-\gamma)^2(\nf_x-\nf_z)} e^{\frac{\rho}{2(1+\rho)}(\hat\gamma-\gamma)^2 (\nf_x+\nf_y+\nf_z)} \\
    & \times \prod_{w=x,y,z} \Big((k/2)!\Big( \frac{\sqrt{2(1+\rho)}}{\hat\gamma-\gamma} \Big)^k e^{\frac{(\hat\gamma-\gamma)^2}{2(1+\rho)}\nf_w} \Big)\\
    & \le C k!^3 \Big( \frac{\sqrt{(1+\rho)}}{\rho(\hat\gamma-\gamma)} \Big)^{3k} e^{-\frac12(3\gamma-\hat\gamma)_+^2 \nf_z} e^{-\frac12(\hat\gamma-\gamma)^2(\nf_x-\nf_z)} e^{\frac{(\hat\gamma-\gamma)^2}{2} (\nf_x+\nf_y+\nf_z)}.
\end{align*}
Going back to \eqref{E:p=2b}, we will have to multiply the above by
\[
e^{\gamma^2\E[X_\eps(x)X_\eps(y) + X_\eps(y)X_\eps(z)+X_\eps(z)X_\eps(x)]}
\le C e^{2\gamma^2 \nf_z} |x-y|^{-\gamma^2}
\]
and integrate the resulting quantity over $x,y,z\in[0,1]^d$ with $|x-y|\le |x-z|\wedge|y-z|$. This leads to the following bound
\begin{align*}
    & \E[I_{\eps,\delta}^{\af,\good}(f,k)^3]
    \le C \|f\|_\infty^3 k!^3 \Big( \frac{\sqrt{(1+\rho)}}{\rho(\hat\gamma-\gamma)} \Big)^{3k} \sum_{\nf_z \ge 0} \int_{[0,1]^d} \d z \int_{B(z,e^{-\nf_z})\setminus B(z,e^{-\nf_z-1})} \hspace{-6pt} \d x \int_{B(x,e^{-\nf_z})} \d y \times \\
    & \hspace{20pt} \times e^{-\frac12(3\gamma-\hat\gamma)_+^2 \nf_z + (\hat\gamma-\gamma)^2\nf_z+2\gamma^2\nf_z}
    |x-y|^{-\gamma^2 -\frac12(\hat\gamma-\gamma)^2}.
\end{align*}
When $\hat\gamma<\sqrt{2d}$, the exponent $\gamma^2+\frac12(\hat\gamma-\gamma)^2$ is smaller than $d$ for all $\gamma < \frac{2}{3}\sqrt{2d}$. Since we have assumed that $\gamma < \sqrt{2d/3}$, this is indeed the case here and the integral over $y$ of $|x-y|^{-\gamma^2-\frac12(\hat\gamma-\gamma)^2}$ is bounded by $C e^{-(d-\gamma^2-\frac12(\hat\gamma-\gamma)^2)\nf_z}$.
The integral over $x$ will contribute $e^{-d\nf_z}$ and the sum over $\nf_z$ can be bounded by
\[
C \sum_{\nf_z\ge0} e^{-(2d-\gamma^2-\frac12(\hat\gamma-\gamma)^2 +\frac12(3\gamma-\hat\gamma)_+^2 - (\hat\gamma-\gamma)^2-2\gamma^2)\nf_z}.
\]
One can check that the function $\hat\gamma\in(\gamma,3\gamma\wedge\sqrt{2d})\mapsto 2d-\gamma^2-\frac12(\hat\gamma-\gamma)^2 +\frac12(3\gamma-\hat\gamma)_+^2 - (\hat\gamma-\gamma)^2-2\gamma^2$ stays positive (it vanishes at $\hat\gamma=\sqrt{2d}$, when $3\gamma\ge\sqrt{2d}$). The sum over $\nf_z$ is thus bounded by some constant which concludes the proof.
\end{proof}

\subsection{First moment, bad event}\label{SS:1st}

Let $\eta>0$ and $\hat\gamma\in (\gamma,\sqrt{2d})$.
For $N \ge 0$, let
\begin{equation}\label{E:defMN}
    M_N = \max_{\substack{\eps = e^{-n}\\0 \le n \le N}} \max_{k\ge 0} \frac{1}{k!} \Big(  \frac{(1+\eta)\sqrt{2}}{\hat\gamma-\gamma} \Big)^{-k} \hspace{-3pt}\sup_{\af \in [0,1]} \sup_{0<\|f\|_\infty < \infty} \frac1{\|f\|_\infty} \E \Big[ \Big| \int_{[0,1]^d} \hspace{-3.8pt}  f(x) :X_\eps^\af(x)^k e^{\gamma X_\eps^\af(x)}: \d x \Big| \Big]
\end{equation}
where the supremum runs over scaling parameters $\af \in [0,1]$ and measurable functions $f:[0,1]^d \to \R$ with $0<\|f\|_\infty < \infty$. We will show that the sequence $(M_N)_{N \ge0}$ is bounded.
Let us start by showing that it is a sequence of finite real numbers.

\begin{lemma}\label{L:initialisation}
    For all $N \ge 0$, $M_N$ is finite.
\end{lemma}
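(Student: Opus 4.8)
The plan is to bound, at each fixed regularisation scale, the quantity inside the double maximum defining $M_N$, and to observe that it decays super-exponentially in $k$; the maximum over $k \ge 0$ is therefore attained and finite, and taking the maximum over the finitely many $n \in \{0,\dots,N\}$ yields $M_N < \infty$.

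First I would fix $n \in \{0,\dots,N\}$, set $\eps = e^{-n}$, and fix $\af \in [0,1]$ together with a bounded measurable $f : [0,1]^d \to \R$. Since $X_\eps^\af$ is a smooth Gaussian field, the integral $\int_{[0,1]^d} f(x) :\!X_\eps^\af(x)^k e^{\gamma X_\eps^\af(x)}\!: \d x$ is a genuine random variable, and by Tonelli's theorem together with the triangle inequality,
\[
\E \Big[ \Big| \int_{[0,1]^d} f(x) :\!X_\eps^\af(x)^k e^{\gamma X_\eps^\af(x)}\!: \d x \Big| \Big] \le \|f\|_\infty \int_{[0,1]^d} \E\big[ \big| :\!X_\eps^\af(x)^k e^{\gamma X_\eps^\af(x)}\!: \big| \big] \d x.
\]
It thus suffices to bound $\E\big[\,\big|:\!Z^k e^{\gamma Z}\!:\big|\,\big]$ for a centred Gaussian $Z = X_\eps^\af(x)$ of variance $\sigma^2 = (\sigma_\eps^\af)^2$, uniformly over $x \in [0,1]^d$ and $\af \in [0,1]$.

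Second I would use the identity \eqref{E:myWick}, $:\!Z^k e^{\gamma Z}\!: = \sigma^k H_k\big(\tfrac{Z - \gamma\sigma^2}{\sigma}\big) e^{\gamma Z - \frac{\gamma^2}{2}\sigma^2}$, and apply the Cameron--Martin theorem: since $e^{\gamma Z - \frac{\gamma^2}{2}\sigma^2}$ tilts the law of $Z$ into that of $Z + \gamma\sigma^2$,
\[
\E\big[\,\big|:\!Z^k e^{\gamma Z}\!:\big|\,\big] = \sigma^k\,\E\big[\,|H_k(Z/\sigma)|\,\big] = \sigma^k\,\E\big[\,|H_k(\mathcal N)|\,\big],
\]
where $\mathcal N$ is a standard normal (the polynomial growth of $H_k$ poses no problem for this shift). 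By Cauchy--Schwarz and the orthonormality relation $\E[H_k(\mathcal N)^2] = k!$ recalled in the proof of Lemma~\ref{L:Hermite_martingale}, one has $\E[\,|H_k(\mathcal N)|\,] \le \sqrt{k!}$. For the variance, \eqref{E:L_covariance1} applied with $\delta = \eps$ and $y = x$ gives $\sigma_\eps^\af = \sqrt{C_\eps^\af(x,x)} \le \sqrt{n + C_0}$ for an absolute constant $C_0$, uniformly in $\af \in [0,1]$ and $x \in [0,1]^d$; since $n \le N$ this is at most $\Sigma := \sqrt{N + C_0}$.

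Putting these together, $\E\big[\,\big|:\!X_\eps^\af(x)^k e^{\gamma X_\eps^\af(x)}\!:\big|\,\big] \le \Sigma^k \sqrt{k!}$, so the $k$-th contribution to the definition of $M_N$ is at most $\tfrac{\Sigma^k}{\sqrt{k!}}\big(\tfrac{(1+\eta)\sqrt2}{\hat\gamma-\gamma}\big)^{-k}$, which tends to $0$ as $k \to \infty$. Hence the supremum over $k \ge 0$ of that contribution is finite, and taking the maximum over $n \in \{0,\dots,N\}$ shows $M_N < \infty$. I do not expect a genuine obstacle here; the only points needing (routine) care are the validity of the Cameron--Martin shift for the polynomially-growing integrand $H_k$ and the fact---observed in the proof of Lemma~\ref{L:covariance}---that the covariance estimate \eqref{E:L_covariance1} is uniform in the scaling parameter $\af$.
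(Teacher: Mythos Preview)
Your proof is correct and follows essentially the same approach as the paper's: both pull absolute values inside the integral, apply Cameron--Martin to reduce to $(\sigma_\eps^\af)^k\,\E[|H_k(\mathcal N)|]$, and bound the variance via Lemma~\ref{L:covariance}. The only difference is that you bound $\E[|H_k(\mathcal N)|] \le \sqrt{k!}$ directly by Cauchy--Schwarz, whereas the paper invokes the pointwise bound of Lemma~\ref{L:Hermite_bound} with $\rho=1/2$; your route is slightly cleaner here, but the arguments are otherwise identical.
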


This result is extremely crude since it does not control the potential blow-up of $M_N$ as the approximation level $N$ goes to infinity. Nevertheless, it will serve as an initial estimate in our inductive argument below.

\begin{proof}[Proof of Lemma \ref{L:initialisation}]
    Let $n \ge 0$, $\eps = e^{-n}$. Let $f:[0,1]^d \to \R$ be bounded and $\af \in [0,1]$. Since we do not wish a uniform control in $n$, we can use the trivial bound
    \begin{align}\label{E:pf_initi1}
        \E \Big[ \Big| \int_{[0,1]^d}  f(x) :X_\eps^\af(x)^k e^{\gamma X_\eps^\af(x)}: \d x \Big| \Big]
        \le \|f\|_\infty \int_{[0,1]^d} \E[ |:X_\eps^\af(x)^k e^{\gamma X_\eps^\af(x)}:|] \d x.
    \end{align}
    By Cameron--Martin theorem, the expectation on the right hand side equals
    \begin{align*}
        (\sigma_\eps^\af)^k \E \Big[ \Big| H_k \Big( \frac{X_\eps^\af(x)}{\sigma_\eps^\af} \Big) \Big| \Big] = (\sigma_\eps^\af)^k \E[|H_k(Z)|]
    \end{align*}
    where $Z$ is a standard normal random variable. Lemma \ref{L:Hermite_bound} applied to $\rho =1/2$ shows that $\E[|H_k(Z)|] / \sqrt{k!}$ grows at most exponentially in $k$. By Lemma \ref{L:covariance}, the term $(\sigma_\eps^\af)^k$ is bounded by $C(n)^k$, uniformly in $\af$. The left hand side of \eqref{E:pf_initi1} is thus at most $C(n)^k \sqrt{k!} \|f\|_\infty$ which implies that $M_N$ is finite for all $N\ge0$.
\end{proof}

The following result is at the heart of our inductive argument.

\begin{lemma}\label{L:inductive}
Let $\rho \in (0,1)$.
There exists $C,c>0$ depending possibly on $\gamma$, $\hat\gamma$ and $\rho$ such that for all $\af \in [0,1]$, $N \ge \mathfrak{m} \ge 1$, $k \ge 0$ and $f:\R^d \to \R$ a measurable bounded function supported in $[0,1]^d$, the following holds.
Let $\delta = e^{-\mathfrak{m}}$, $\eps=e^{-N}$ and for $n \ge 0$, let $\af_n = \af e^{-n\alpha}$. 
For all $n=\mathfrak{m},\dots, N$ and $j=0,\dots, k$, there exist a random function $\tilde f_{j,n} : \R^d \to \R$ supported in $[0,1]^d$ and a centred Gaussian random variable $N_n$ with variance bounded by $n+C$, jointly independent of the field $X_{e^n \eps}^{\af_n}$ such that
\begin{equation}\label{E:L_inductive1}
    \E[ |I_{\eps,\delta}^{\af,\bad}(f,k)|] \le C \sum_{n=\mathfrak{m}}^N \sum_{j=0}^k \binom{k}{j} \E[\mathbf{1}_{N_n \ge (\hat\gamma-\gamma)n+C} |I_{e^n \eps}^{\af_n}(\tilde f_{k-j,n},j)| ].
\end{equation}
Moreover, for all $n=\mathfrak{m},\dots, N$ and $j=0,\dots, k$, almost surely,
\begin{equation}\label{E:L_inductive2}
\E[\|\tilde f_{k-j,n}\|_\infty \vert N_n] \le C \|f\|_\infty n^{(k-j)/2} \rho^{-(k-j)} \sqrt{(k-j)!} e^{\frac{1+C/n}{4n} N_n^2}.
\end{equation}
\end{lemma}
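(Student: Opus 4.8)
The plan is to decompose the bad event $G_{\eps,\delta}(x)^c$ according to the \emph{first} scale $n\in\{\mathfrak m,\dots,N\}$ at which the good event fails, i.e. the smallest $n$ with $X_n^\af(x_n)>\hat\gamma n$. Writing
\[
\mathbf 1_{G_{\eps,\delta}(x)^c}=\sum_{n=\mathfrak m}^N \mathbf 1_{E_n(x_n)^c}\prod_{m=\mathfrak m}^{n-1}\mathbf 1_{E_m(x_m)},
\]
and plugging this into the definition of $I_{\eps,\delta}^{\af,\bad}(f,k)$, gives $\E[|I_{\eps,\delta}^{\af,\bad}(f,k)|]\le\sum_{n=\mathfrak m}^N\E[|\int f(x):X_\eps^\af(x)^ke^{\gamma X_\eps^\af(x)}:\mathbf 1_{E_n(x_n)^c}\prod_{m<n}\mathbf 1_{E_m(x_m)}\,\d x|]$. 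The product over $m<n$ is only used to be bounded by $1$ (we are after an inequality, so we may discard it), leaving a term involving $\mathbf 1_{E_n(x_n)^c}=\mathbf 1_{X_n^\af(x_n)>\hat\gamma n}$ alone.

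Next I would apply the scaling covariance of Lemma \ref{L:scaling_covariance_Hermite} at scale $r_0=e^{-n}$: on each cube $Q_n(x_n)$ the field $X_\eps^\af$ decomposes as an independent coarse field $X_n^\af$ (which carries the event $X_n^\af(x_n)>\hat\gamma n$) plus a rescaled copy of $X_{e^n\eps}^{\af_n}$ with $\af_n=\af e^{-n\alpha}$. The binomial expansion \eqref{E:L_Hermite_umbral1} (as used in Lemma \ref{L:scaling_covariance_Hermite}) turns $:X_\eps^\af(x)^ke^{\gamma X_\eps^\af(x)}:$ into $\sum_{j=0}^k\binom kj$ times a product of $:(X_{e^n\eps}^{\af_n})^je^{\gamma X_{e^n\eps}^{\af_n}}:$ and a factor depending only on the coarse field $X_n^\af$ and the mollified coarse field $X_{n,\eps}^\af$. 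Collecting the latter factor together with $f$, with the Cameron--Martin shift coming from the cross-covariance $\gamma C_{n,\eps}^\af$, and with the indicator $\mathbf 1_{X_n^\af(x_n)>\hat\gamma n}$, I define the random test function $\tilde f_{k-j,n}(y):=f(x_n+e^{-n}y)\cdot(\text{coarse factor of Hermite degree }k-j)\cdot(\text{Jacobian }e^{-nd})$, and set $N_n:=X_n^\af(x_n)-\gamma C_{n,\eps}^\af(x_n,x)$ (a centred Gaussian, variance $n+O(1)$ by Lemma \ref{L:covariance}, independent of $X_{e^n\eps}^{\af_n}$). The event $E_n(x_n)^c$ then reads $N_n\ge(\hat\gamma-\gamma)n+O(1)$. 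Summing over the cubes $x_n\in\Lambda_n$ reassembles $\int_{[0,1]^d}$, and the triangle inequality over $n$ and $j$ yields \eqref{E:L_inductive1}, with the constant $C$ absorbing the $O(1)$'s.

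For \eqref{E:L_inductive2}, the coarse factor of Hermite degree $k-j$ in $\tilde f_{k-j,n}$ is of the form $(\sigma^\af_{n,\eps})^{k-j}H_{k-j}\big((X_{n,\eps}^\af(x)-\gamma(\sigma^\af_{n,\eps})^2)/\sigma^\af_{n,\eps}\big)e^{\gamma X_{n,\eps}^\af(x)-\frac{\gamma^2}2(\sigma^\af_{n,\eps})^2}$; conditionally on $N_n$, the Gaussian $X_{n,\eps}^\af(x)$ has conditional mean comparable to $N_n$ and bounded conditional variance, and $\sigma^\af_{n,\eps}\le\sqrt{n+C}$ by \eqref{E:L_covariance5}. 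Bounding $|H_{k-j}|$ via Lemma \ref{L:Hermite_bound} with parameter $\rho$ gives a factor $n^{(k-j)/2}\rho^{-(k-j)}\sqrt{(k-j)!}$ times $\exp\big(\frac{\rho}{2(1+\rho)}\cdot(\text{something}\approx N_n^2/n)\big)$, which after taking the conditional expectation of the remaining Gaussian exponential $e^{\gamma X_{n,\eps}^\af}$ produces the claimed $e^{\frac{1+C/n}{4n}N_n^2}$ (the exponent $\tfrac14$ being what one gets from $\tfrac{\rho}{2(1+\rho)}\to\tfrac14$ as $\rho\to1$, up to the $C/n$ correction). The main obstacle is the bookkeeping in this last step: one must track carefully that, conditionally on $N_n$, the field $X_{n,\eps}^\af(x)$ is Gaussian with the right mean and a \emph{uniformly} bounded variance (this uses \eqref{E:L_covariance5}--\eqref{E:L_covariance6} and the fact that $\eps\le e^{-n}$), and that the Cameron--Martin shifts by $\gamma C^\af_{n,\eps}$ and $\gamma(\sigma^\af_{n,\eps})^2$ combine to give exactly the advertised constant in the exponent — getting a worse constant than $\tfrac14$ here would ultimately spoil the radius of convergence, so the sharp form of Lemma \ref{L:Hermite_bound} and the $\rho\to1$ limit are essential.
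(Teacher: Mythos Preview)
Your approach is essentially the same as the paper's: decompose $G_{\eps,\delta}(x)^c$ by the first failure scale, drop the earlier good events, split into cubes $Q_n(y)$, apply the scaling covariance (Lemma~\ref{L:scaling_covariance_Hermite}) to factor the Wick product, apply Cameron--Martin, and bound the coarse Hermite factor via Lemma~\ref{L:Hermite_bound}. Two small clarifications are worth making. First, the coefficient $\tfrac14$ in \eqref{E:L_inductive2} does \emph{not} arise only in the limit $\rho\to1$: the paper obtains it exactly for every fixed $\rho\in(0,1)$ by writing $\|X^\af_{n,\eps}(e^{-n}\cdot)\|_\infty\le(1+C/n)|N_n|+\|Z\|_\infty$ with $Z$ independent of $N_n$, and then applying $(a+b)^2\le(1+\eta)a^2+(1+\eta^{-1})b^2$ with the specific choice $\eta=\tfrac{1-\rho}{2\rho}$, so that $(1+\eta)\tfrac{\rho}{2(1+\rho)}=\tfrac14$ on the nose; the price $(1+\eta^{-1})$ only hits the $\|Z\|_\infty^2/n$ term and is absorbed in $C$. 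Second, since $\tilde f_{k-j,n}$ involves a supremum over $x\in Q$, the ``bounded conditional variance'' observation must be upgraded to a bound on $\E[e^{\frac{C}{n}\|Z\|_\infty^2+\gamma\|Z\|_\infty}]$; the paper closes this with Borell--TIS.
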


In the above statement, we are not very specific about what we mean by ``random function''. As we will see in the proof, they are simply measurable functions of smooth Gaussian fields.

\begin{proof}[Proof of Lemma \ref{L:inductive}]
Let $\rho \in (0,1)$.
Let $N \ge \mathfrak{m} \ge 1$, $\af \in [0,1]$ and $f : [0,1]^d \to \R$ be a measurable function and $k \ge 0$. Set $\delta = e^{-\mathfrak{m}}$ and $\eps=e^{-N}$.
We decompose the ``bad event'' according to the first scale where the condition that $X^\af_n(x_n) \le \hat\gamma n$ fails. More precisely, for $x \in \R^d$, we can write
\begin{align}
G_{\eps,\delta}(x)^c = \bigsqcup_{n= \mathfrak{m}}^N F_{n,\delta}(x) \quad \text{where} \quad
F_{n,\delta}(x) = E_n(x_n)^c \cap \bigcap_{m=\mathfrak{m}}^{n-1} E_m(x_m), \quad n \in \{\mathfrak{m},\dots, N\}.
\end{align}
We can write $I_{\eps,\delta}^\bad$ using this partition and then use that $F_{n,\delta}(x)=F_{n,\delta}(y)$ for all $y \in \Lambda_n$ and $x \in Q_n(y)$ (recall Notation \ref{N:lattice}) to get that
\begin{align*}
I_{\eps,\delta}^{\af,\bad}(f,k) & = \sum_{n=\mathfrak{m}}^N \int_Q f(x) : X^\af_\eps(x)^k e^{\gamma X^\af_\eps(x)}: \mathbf{1}_{F_{n,\delta}(x)} \d x \\
& = \sum_{n= \mathfrak{m}}^N \sum_{y \in \Lambda_n} \mathbf{1}_{F_{n,\delta}(y)} \int_{Q_n(y)} f(x) : X^\af_\eps(x)^k e^{\gamma X^\af_\eps(x)}:  \d x.
\end{align*}
We can thus bound
\begin{gather}\label{E:expect1}
\E[|I_{\eps,\delta}^{\af,\bad}(f,k)|] \le \sum_{n= \mathfrak{m}}^N e^{-dn} \sum_{y \in \Lambda_n} E_{n,y},\\
\text{where} \hspace{69pt} E_{n,y} = e^{dn} \E \Big[ \mathbf{1}_{F_{n,\delta}(y)} \Big| \int_{Q_n(y)} f(x) : X^\af_\eps(x)^k e^{\gamma X^\af_\eps(x)}:  \d x \Big| \Big]. \hspace{75pt}
\notag
\end{gather}
We bound $\mathbf{1}_{F_{n,\delta}(y)} \le \mathbf{1}_{\{X^\af_n(y) \ge \hat\gamma n\}}$, do the change of variable $x = y + e^{-n} x'$ and get that
\begin{align*}
    E_{n,y} \le \E \Big[ \mathbf{1}_{\{X^\af_n(y) \ge \hat\gamma n \}} \Big| \int_{Q} f(y+e^{-n}x') : X^\af_\eps(y+e^{-n}x')^k e^{\gamma X^\af_\eps(y+e^{-n}x')}:  \d x' \Big| \Big].
\end{align*}
We now use Lemma \ref{L:scaling_covariance_Hermite}. Letting $\af_n = \af e^{-n\alpha}$, $(: X_\eps^\af(y+e^{-n} \cdot)^k e^{\gamma X_\eps^\af(y+e^{-n} \cdot)}:, X_n^\af(y))$ has the same law as
\[
    \Big( \sum_{j=0}^k \binom{k}{j} :X_{e^n \eps}^{\af_n}(\cdot)^j e^{\gamma X_{e^n \eps}^{\af_n}(\cdot)}: \times
    :X_{n,\eps}^\af(e^{-n}\cdot)^{k-j} e^{\gamma X_{n,\eps}^\af(e^{-n}\cdot)}:, X_n^\af(0) \Big)
\]
where the fields $X_{e^n\eps}^{\af_n}$ and $X_n^\af$ are independent and $X_{n,\eps}^\af = X_n^\af * \varphi_\eps$.
Let $N_n = X^\af_n(0)$ and for $x \in \R^d$, let
\begin{align*}
    f_j(x) = f(y + e^{-n}x) (\sigma_{n,\eps}^\af)^j H_j \Big( \frac{X^\af_{n,\eps}(e^{-n}x) - \gamma (\sigma_{n,\eps}^\af)^2}{\sigma_{n,\eps}^\af} \Big) e^{\gamma(X_{n,\eps}^\af(e^{-n}x)-N_n) -\frac{\gamma^2}{2}((\sigma_{n,\eps}^\af)^2 - (\sigma_n^\af)^2)} \mathbf{1}_{x \in Q}.
\end{align*}
Then
\begin{align*}
    E_{n,y} \le & \sum_{j=0}^k \binom{k}{j} \E\Big[ \mathbf{1}_{\{ N_n \ge \hat\gamma n \}} e^{\gamma N_n - \frac{\gamma^2}{2}(\sigma_n^\af)^2} |I_{e^n \eps}^{\af_n}(f_{k-j},j)| \Big].
\end{align*}
By Cameron--Martin theorem, we further have
\begin{align*}
    E_{n,y} \le \sum_{j=0}^k \binom{k}{j} \E\Big[ \mathbf{1}_{\{ N_n \ge \hat\gamma n -\gamma (\sigma_n^\af)^2 \}} |I_{e^n \eps}^{\af_n}(\tilde f_{k-j},j)| \Big]
\end{align*}
where we define for $x \in \R^d$, $m_{n,\eps}(x) = \E[X_n^\af(0)X_{n,\eps}^\af(e^{-n}x)] - \frac12 (\sigma_n^\af)^2 - \frac12(\sigma_{n,\eps}^\af)^2$, $\tilde m_{n,\eps}(x) = \E[X_n^\af(0)X_{n,\eps}^\af(e^{-n}x)] - (\sigma_{n,\eps}^\af)^2$ and
\[
\tilde f_{j}(x) = f(y + e^{-n} x) (\sigma_{n,\eps}^\af)^j H_j \Big( \frac{X^\af_{n,\eps}(e^{-n}x) + \gamma \tilde m_{n,\eps}(x)}{\sigma_{n,\eps}^\af} \Big) e^{\gamma(X_{n,\eps}^\af(e^{-n}x)-N_n) +\gamma^2 m_{n,\eps}(x)} \mathbf{1}_{x \in Q}.
\]
Going back to \eqref{E:expect1}, this concludes the proof of \eqref{E:L_inductive1}.

To finish the proof of the lemma, we need to control the $L^\infty(Q)$-norm of $\tilde f_j$. 
First, by \eqref{E:L_covariance5} and \eqref{E:L_covariance6}, as $x$ varies in $Q$,
\[
m_{n,\eps}(x), \quad \tilde m_{n,\eps}(x) \quad \text{and} \quad \E[X_{n,\eps}^\af(e^{-n} x) X_n^\af(0)] - (\sigma_n^\af)^2
\]
are uniformly bounded, independently of $\af$, $n$ and $\eps$.
Then, by using Lemma \ref{L:Hermite_bound} to bound the Hermite polynomial, we get that $\|\tilde f_{j}\|_{L^\infty(Q)}$ is at most
\begin{align*}
    C \|f\|_\infty n^{j/2} \rho^{-j/2} \sqrt{j!} \exp \Big( \frac{\rho}{2(1+\rho)}\frac{1}{n} (\|X^\af_{n,\eps}(e^{-n}\cdot)\|_{L^\infty(Q)} + C)^2 + \gamma \|X^\af_{n,\eps}(e^{-n}\cdot)-N_n\|_{L^\infty(Q)} \Big).
\end{align*}
We can decompose
\[
X_{n,\eps}^\af(e^{-n}x) - N_n = \frac{\E[X_{n,\eps}^\af(e^{-n} x) X_n^\af(0)] - (\sigma_n^\af)^2}{(\sigma_n^\af)^2} N_n + Z(x)
\]
where $Z$ is a Gaussian field, independent of $N_n$.
So
\begin{align*}
    \|X^\af_{n,\eps}(e^{-n}\cdot)-N_n\|_{L^\infty(Q)} \hspace{-1pt}\le\hspace{-1pt} \tfrac{C}{n} |N_n| + \|Z\|_{L^\infty(Q)};
    \hspace{4pt} \|X^\af_{n,\eps}(e^{-n}\cdot)\|_{L^\infty(Q)} \hspace{-1pt}\le\hspace{-1pt} \big(1+\tfrac{C}{n}\big) |N_n| + \|Z\|_{L^\infty(Q)}.
\end{align*}
Let $\eta =\frac{1-\rho}{2\rho}$.
We can bound $(a+b)^2 \le (1+\eta)a^2 + (1+\frac1\eta)b^2$ for all $a,b\ge 0$. Because $(1+\eta) \frac{\rho}{2(1+\rho)} = \frac14$,
this allows us to bound
\[
\frac{\rho}{2(1+\rho)} \frac{1}{n} (\|X^\af_{n,\eps}(e^{-n}\cdot)\|_{L^\infty(Q)} + C)^2
\le \frac{1+C/n}{4n} N_n^2 + \frac{C}{n} \|Z\|_{L^\infty(Q)}^2 + C
\]
Overall,
\begin{align*}
    \|\tilde f_{j}\|_{L^\infty(Q)} \le C \|f\|_\infty n^{j/2} \rho^{-j/2} \sqrt{j!} e^{\frac{1+C/n}{4n} N_n^2} e^{\frac{C}{n} \|Z\|_{L^\infty(Q)}^2 + \gamma \|Z\|_{L^\infty(Q)}}.
\end{align*}
Since
$\sup_{x \in Q} \E[Z(x)^2]$ is uniformly bounded in $\af$, $n$ and $\eps$, by the Borell--TIS inequality, $\|Z\|_{L^\infty(Q)}$ has Gaussian tails (with uniform constants). So the expectation of $e^{\frac{C}{n} \|Z\|_{L^\infty(Q)}^2 + \gamma \|Z\|_{L^\infty(Q)}}$ is uniformly bounded. This concludes the proof of \eqref{E:L_inductive2}.
\end{proof}

We now have all the ingredients to prove Proposition \ref{P:bad_L1}.

\begin{proof}[Proof of Proposition \ref{P:bad_L1}]
Recall the definition \eqref{E:defMN} of $M_N$, $N\ge 0$, which depends on a small parameter $\eta>0$.
Let $\rho \in (0,1)$ be close enough to 1 so that $\rho^2(1+\eta) > 1$.
We fix an integer $\mathfrak{m} \ge 1$ and set $\delta = e^{-\mathfrak{m}}$.
Except when explicitly mentioned, all the constants in this proof will be independent of $\mathfrak{m}$ but may depend on $\gamma,\hat\gamma$, $\eta$ and $\rho$.
We are going to show the existence of two constants $C_1(\mathfrak{m}),C_2(\mathfrak{m})>0$ such that
\begin{equation}\label{E:pf_goal}
\lim_{\mathfrak{m}\to\infty} C_2(\mathfrak{m}) =0
\qquad \text{and} \qquad 
M_N \le \max (M_{N-1}, C_1(\mathfrak{m}) + C_2(\mathfrak{m}) M_{N-1}), \quad N \ge \mathfrak{m}.
\end{equation}
In particular, when $\mathfrak{m}$ is large enough, \eqref{E:pf_goal} ensures that $C_2(\mathfrak{m}) <1$. By Lemma \ref{L:very_elementary}, and because $M_N$ is finite for any $N$ (see Lemma \ref{L:initialisation}), this will imply that $(M_N)_{N \ge 0}$ is a bounded sequence.
We will also show that for all $N \ge \mathfrak{m}$,
\begin{equation}\label{E:pf_goal2}
    \sup_{k\ge0} \frac{1}{k!}\Big((1+\eta) \frac{\sqrt2}{\hat\gamma-\gamma}\Big)^{-k} \sup_{\af \in [0,1]} \sup_{0<\|f\|_\infty <\infty} \|f\|_\infty^{-1} \E[ |I_{\eps,\delta}^{\af,\bad}(f,k)|]
    \le C_2(\mathfrak{m}) M_{N-1}.
\end{equation}
Knowing that $(M_N)_{N\ge0}$ is uniformly bounded, this will thus prove that the supremum over $N \ge \mathfrak{m}$ of the left hand side converges to zero as $\mathfrak{m} \to \infty$, i.e. this will prove \eqref{E:P_bdd_L1} which will conclude the proof. The rest of the proof is dedicated to showing \eqref{E:pf_goal} and \eqref{E:pf_goal2}.


Let $N \ge \mathfrak{m}$ and $k \ge0$.
As in the introduction of Section \ref{S:convergence}, let $p=p(\gamma) \ge 1$ be the smallest integer such that $2p\gamma \ge \sqrt{2d}$.
We bound
\begin{align}\label{E:pf_supsup1}
\sup_{\af \in [0,1]} \sup_{0<\|f\|_\infty <\infty} \|f\|_\infty^{-1} \E[ |I_{\eps}^{\af}(f,k)|]
& \le \sup_{\af \in [0,1]} \sup_{0<\|f\|_\infty <\infty} \|f\|_\infty^{-1} \E[ |I_{\eps,\delta}^{\af,\good}(f,k)|^{2p}]^{1/(2p)} \\
& + \sup_{\af \in [0,1]} \sup_{0<\|f\|_\infty <\infty} \|f\|_\infty^{-1} \E[ |I_{\eps,\delta}^{\af,\bad}(f,k)|].
\notag
\end{align}
By Proposition \ref{P:bounded_L2} when $p=1$ and Proposition \ref{P:bound_pmoment} when $p \ge 2$, the first term is bounded by
\[ 
C \Big((1+\eta)\frac{\sqrt2}{\hat\gamma-\gamma} \Big)^k k! \delta^{-d/2} \le Ce^{\mathfrak{m}d/2}\Big((1+\eta)\frac{\sqrt2}{\hat\gamma-\gamma} \Big)^k k!.
\]
Lemma~\ref{L:inductive} deals with the second term: it is bounded by
\begin{align*}
    C \sum_{n=\mathfrak{m}}^N \sum_{j=0}^k \binom{k}{j} \sup_{\af \in [0,1]} \sup_{0<\|f\|_\infty <\infty} \|f\|_\infty^{-1} \E[\mathbf{1}_{N_n \ge (\hat\gamma-\gamma)n+C} |I_{e^n \eps}^{\af_n}(\tilde f_{k-j,n},j)| ],
\end{align*}
where $N_n$ is a centred Gaussian random variable with variance at most $n+C$ and $\tilde f_{k-j,n}$ is as in the statement of the lemma.
By definition of $(M_n)_{n \ge0}$, we can bound for all $n,j$,
\begin{align*}
    \E[\mathbf{1}_{N_n \ge (\hat\gamma-\gamma)n+C} |I_{e^n \eps}^{\af_n}(\tilde f_{k-j,n},j)| ]
    \le M_{N-n} \Big((1+\eta)\frac{\sqrt2}{\hat\gamma-\gamma} \Big)^j j! \E[\mathbf{1}_{N_n \ge (\hat\gamma-\gamma)n+C} \|\tilde f_{k-j,n}\|_{\infty} ].
\end{align*}
Because $(M_n)_{n \ge0}$ is nondecreasing, for all $n = \mathfrak{m},\dots,N$, $M_{N-n} \le M_{N-1}$. Moreover, using the bound \eqref{E:L_inductive2} on $\|\tilde f_{k-j,n}\|_{\infty}$, we get that
\begin{align*}
    & \E[\mathbf{1}_{N_n \ge (\hat\gamma-\gamma)n+C} \|\tilde f_{k-j,n}\|_{\infty} ]
    \le C \|f\|_\infty n^{(k-j)/2} \rho^{-(k-j)} \sqrt{(k-j)!} \E[ \mathbf{1}_{N_n \ge (\hat\gamma-\gamma)n+C} e^{\frac{1+C/n}{4n} N_n^2} ] \\
    & \le C \|f\|_\infty n^{(k-j)/2} \rho^{-(k-j)} \sqrt{(k-j)!} e^{-\frac{(\hat\gamma-\gamma)^2}{4}n}.
\end{align*}
Putting things together, we deduce that the second right hand side term of \eqref{E:pf_supsup1} is at most
\begin{align*}
    C M_{N-1} \sum_{n=\mathfrak{m}}^N \sum_{j=0}^k \binom{k}{j} \Big((1+\eta)\frac{\sqrt2}{\hat\gamma-\gamma} \Big)^j j! n^{(k-j)/2} \rho^{-(k-j)} \sqrt{(k-j)!} e^{-\frac{(\hat\gamma-\gamma)^2}{4}n}.
\end{align*}
We now exchange the sums to sum over $n$ first. By Lemma \ref{L:elementary_sum} (applied to $\eta = 1/\rho -1$), we can bound
\[
\sum_{n \ge \mathfrak{m}} e^{-\frac{(\hat\gamma-\gamma)^2}{4} n} n^{(k-j)/2} \le C ((k-j)/2)! \Big( \frac1\rho \frac{2}{\hat\gamma-\gamma} \Big)^{k-j} e^{-c \mathfrak{m}}.
\]
After elementary simplifications, we obtain that the second right hand side term of \eqref{E:pf_supsup1} is at most
\begin{align*}
    C e^{-c\mathfrak{m}} M_{N-1} \Big((1+\eta)\frac{\sqrt2}{\hat\gamma-\gamma} \Big)^k k! \sum_{j=0}^k \frac{((k-j)/2)!}{\sqrt{(k-j)!}} \Big( \frac1{\rho^2(1+\eta)} \sqrt{2} \Big)^{k-j}.
\end{align*}
By Stirling's formula, $\frac{((k-j)/2)!}{\sqrt{(k-j)!}} 2^{(k-j)/2}$ behaves at most polynomially. Because we have taken $\rho$ such that $\rho^2(1+\eta) > 1$, the above sum over $j$ is bounded by some constant. This proves \eqref{E:pf_goal2}. Going back to \eqref{E:pf_supsup1}, we have obtained that
\begin{align*}
    M_N \le C e^{\mathfrak{m}d/2} + C e^{-c \mathfrak{m}} M_{N-1}.
\end{align*}
This concludes the proof of \eqref{E:pf_goal} and the proof of Proposition \ref{P:bad_L1}.
\end{proof}

\section{Proof of Theorem \ref{T:main}}\label{S:reduction}

In this section, we use the coupling result \cite[Theorem B]{zbMATH07172346} to prove Theorem \ref{T:main} from the results derived in Section~\ref{S:convergence}.
Let $D\subset\R^d$ be a domain and $X$ be a centred log-correlated Gaussian field on $D$ as above Theorem~\ref{T:main}. Let $f : D\to \R$ be a bounded measurable function, whose support $\mathrm{supp}(f)$ is compactly included in $D$.
By \cite[Theorem B]{zbMATH07172346}, one can cover $\mathrm{supp}(f)$ by finitely many open balls $B_i$, $i=1, \dots, n$, and such that the following holds. For all $i=1, \dots, n$, one can decompose in $B_i$,
\begin{equation}
    \label{E:decompo_log}
X = X^* + R
\end{equation}
where $X^*$ is an almost $*$-scale invariant field and $R$ is a regular Gaussian field with Hölder continuous realisations (both depending possibly on $i$). Moreover, $X^*$ and $R$ are independent.
Letting $\xi_i : D \to \R, i=1, \dots, n$, be smooth functions such that $\sum \xi = 1$ on $\mathrm{supp}(f)$ and such that for all $i=1, \dots, n$, $\mathrm{supp}(\xi_i) \subset U_i$, we can decompose
\[
f = \sum_{i = 1}^n \xi_i f.
\]
By considering $\xi_i f$ instead of $f$, we may assume without loss of generality that the decomposition \eqref{E:decompo_log} holds on the whole support of $f$. Moreover, without loss of generality, we may assume that the support of $f$ is included in $[0,1]^d$.

Now, let $\gamma \in (-\sqrt{2d},\sqrt{2d})$, $k \ge 0$, $\eps >0$, $\varphi:\R^d \to \R$ be a smooth mollifier and $X_\eps = X * \varphi_\eps$ where $\varphi_\eps = \eps^{-d} \varphi(\cdot/\eps)$ and similar notations for $X^*$ and $R$.
Let us first show that for all $\eta>0$, there exists $C>0$ independent of $k$, $\eps$ and $f$, such that
\begin{equation}
    \label{E:reduction1}
    \E\Big[\Big| \int_D f(x) \wick{X_\eps(x)^k e^{\gamma X_\eps(x)}} \d x \Big| \Big] 
    \le C \|f\|_\infty k! \Big( (1+\eta) \frac{\sqrt{2d}-\gamma}{\sqrt2} \Big)^k.
\end{equation}
The decomposition \eqref{E:decompo_log} implies that $X_\eps = X^*_\eps + R_\eps$ on $\mathrm{supp}(f)$. By the umbral identity \eqref{E:L_Hermite_umbral1},
\begin{align}\label{E:reduction2}
    f(x) \wick{X_\eps(x)^k e^{\gamma X_\eps(x)}}
    = \sum_{j=0}^k \binom{k}{j} f(x) \wick{X_\eps^*(x)^j e^{\gamma X_\eps^*(x)}} \times \wick{R_\eps(x)^{k-j} e^{\gamma R_\eps(x)}}.
\end{align}
Let $j\in \{0,\dots, k\}$.
By Propositions \ref{P:bounded_L2} and \ref{P:bad_L1} applied to the function $x \mapsto f(x) \wick{R_\eps(x)^{k-j} e^{\gamma R_\eps(x)}}$, we have
\begin{align*}
    & \E\Big[\Big| \int_D f(x) \wick{X_\eps^*(x)^j e^{\gamma X_\eps^*(x)}} \times \wick{R_\eps(x)^{k-j} e^{\gamma R_\eps(x)}} \d x \Big| \Big] \\
    & \le C j! \Big( (1+\eta) \frac{\sqrt{2d}-\gamma}{\sqrt2} \Big)^j \E[ \sup_{x \in D} \big| f(x) \wick{R_\eps(x)^{k-j} e^{\gamma R_\eps(x)}} \big| ].
\end{align*}
Because $R$ is a continous Gaussian field and by Lemma \ref{L:Hermite_bound} which gives bounds on the Hermite polynomials, the last expectation above can be bounded by
\[
C \|f\|_\infty C^{k-j} \sqrt{(k-j)!}.
\]
We thus obtain that
\begin{align*}
    \E\Big[\Big| \int_D f(x) \wick{X_\eps(x)^k e^{\gamma X_\eps(x)}} \d x \Big| \Big] 
    & \le C \|f\|_\infty k! \Big( (1+\eta) \frac{\sqrt{2d}-\gamma}{\sqrt2} \Big)^k \sum_{j=0}^k \frac{1}{\sqrt{(k-j)!}} C^{k-j}\\
    & \le C \|f\|_\infty k! \Big( (1+\eta) \frac{\sqrt{2d}-\gamma}{\sqrt2} \Big)^k.
\end{align*}
This gives the uniform bound \eqref{E:reduction1} we were after.

To show that $\int_D f(x) \wick{X_\eps(x)^k e^{\gamma X_\eps(x)}} \d x$ converges in $L^1$, we first replace $R_\eps$ by $R$: by \eqref{E:reduction2},
\begin{align*}
    & \E \Big[ \Big| \int_D f(x) \wick{X_\eps(x)^k e^{\gamma X_\eps(x)}} \d x - \sum_{j=0}^k \binom{k}{j} f(x) \wick{X_\eps^*(x)^j e^{\gamma X_\eps^*(x)}} \times \wick{R(x)^{k-j} e^{\gamma R(x)}}  \Big| \Big] \\
    & \le C \sum_{j=0}^k \binom{k}{j} \Big( \int_D |f(x)|\E[|\wick{X_\eps^*(x)^j e^{\gamma X_\eps^*(x)}}| ] \d x\Big)\\
    & \hspace{40pt} \times
    \E\Big[ \sup_{x \in \mathrm{supp}(f)} |\wick{R_\eps(x)^{k-j} e^{\gamma R_\eps(x)}} - \wick{R(x)^{k-j} e^{\gamma R(x)}}| \Big].
\end{align*}
Because $R$ is Hölder continuous, the last expectation on the right hand side decays polynomially in $\eps$. On the other hand, the integral on the right hand side blows up at most like a power of $|\log \eps|$ (coming from $\var(X_\eps^*(x))^{j/2}$ in front of the Hermite polynomial $H_j$). Hence the right hand side goes to zero. But then $x \mapsto f(x) \wick{R(x)^{k-j} e^{\gamma R(x)}}$ is simply a test function and so, by Theorem~\ref{T:convergence_k}, conditionally on $R$,
\begin{align*}
    \sum_{j=0}^k \binom{k}{j} f(x) \wick{X_\eps^*(x)^j e^{\gamma X_\eps^*(x)}} \times \wick{R(x)^{k-j} e^{\gamma R(x)}}
\end{align*}
converges in $L^1(\P(\cdot\vert R))$. Convergence in $L^1(\P)$ then follows from domination estimates with a similar flavour as above. This concludes the proof of convergence of $\int_D f(x) \wick{X_\eps(x)^k e^{\gamma X_\eps(x)}} \d x$ in $L^1$ as stated in \eqref{E:T_main1}. The proof of the convergence of \eqref{E:T_main3} then follows by dominated convergence theorem, the domination being provided by the uniform estimate \eqref{E:reduction1}. This concludes the proof of Theorem \ref{T:main}.
\qed

\medskip

\noindent
\textbf{Acknowledgements.}
Many thanks to Guillaume Baverez, Baptiste Cerclé and Hubert Lacoin for inspiring discussions. Some of this work was done while the author attended the trimester programme \textit{Probabilistic methods in quantum field theory} at the Hausdorff Research Institute for Mathematics, funded by the Deutsche Forschungsgemeinschaft (DFG, German Research Foundation) under Germany's Excellence Strategy – EXC-2047/1 – 390685813
and the workshop \textit{CFT: algebraic, topological and probabilistic approaches to correlators in conformal field theory} at the Institut Pascal at Université Paris-Saclay with the support of the program ``Investissements d’avenir'' ANR-11-IDEX-0003-01. Thanks to these institutes and to the organisers for their hospitality.

\bibliographystyle{alpha}
\bibliography{bibliography}
\end{document}